\author{Liran Shaul}
\address{Department of Algebra, Faculty of Mathematics and Physics, Charles University in Prague, Sokolovsk\'a 83, 186 75 Praha, Czech Republic}
\email{shaul@karlin.mff.cuni.cz}
\newtheorem{thm}[equation]{Theorem}
\newtheorem{cthm}{Theorem}
\newtheorem{ccor}[cthm]{Corollary}
\newtheorem{cor}[equation]{Corollary}
\newtheorem{prop}[equation]{Proposition}
\newtheorem{lem}[equation]{Lemma}
\theoremstyle{definition}
\newtheorem{dfn}[equation]{Definition}
\newtheorem{rem}[equation]{Remark}
\newtheorem{exa}[equation]{Example}
\newcommand{\surj}{\twoheadrightarrow}
\newcommand{\opn}{\operatorname}
\newcommand{\cat}[1]{\operatorname{\mathsf{#1}}}
\newcommand{\mfrak}[1]{\mathfrak{#1}}
\newcommand{\mrm}[1]{\mathrm{#1}}
\newcommand{\mbb}[1]{\mathbb{#1}}
\renewcommand{\k}{\Bbbk}
\newcommand{\K}{\mbb{K} \hspace{0.05em}}
\renewcommand{\a}{\mfrak{a}}
\newcommand{\m}{\mfrak{m}}
\newcommand{\n}{\mfrak{n}}
\newcommand{\p}{\mfrak{p}}
\newcommand{\q}{\mfrak{q}}
\newcommand{\flatdim}{\operatorname{flat\,dim}}
\newcommand{\depth}{\operatorname{depth}}
\newcommand{\amp}{\operatorname{amp}}
\def\skewtimes{\ltimes\!}
\newcommand{\lcdim}{\operatorname{lc.\dim}}
\newcommand{\cdg}{\cat{DGR}}
\newcommand{\ho}{\mrm{Ho}}
\begin{document}

\title{Koszul complexes over Cohen-Macaulay rings}

\begin{abstract}
We prove a Cohen-Macaulay version of a result by Avramov-Golod and Frankild-J{\o}rgensen about Gorenstein rings,
showing that if a noetherian ring $A$ is Cohen-Macaulay, and $a_1,\dots,a_n$ is any sequence of elements in $A$,
then the Koszul complex $K(A;a_1,\dots,a_n)$ is a Cohen-Macaulay DG-ring.
We further generalize this result,
showing that it also holds for commutative DG-rings.
In the process of proving this, 
we develop a new technique to study the dimension theory of a noetherian ring $A$,
by finding a Cohen-Macaulay DG-ring $B$ such that $\mrm{H}^0(B) = A$,
and using the Cohen-Macaulay structure of $B$ to deduce results about $A$.
As application, we prove that if $f:X \to Y$ is a morphism of schemes,
where $X$ is Cohen-Macaulay and $Y$ is nonsingular, 
then the homotopy fiber of $f$ at every point is Cohen-Macaulay.
As another application, we generalize the miracle flatness theorem.
Generalizations of these applications to derived algebraic geometry are also given.
\end{abstract}

\thanks{{\em Mathematics Subject Classification} 2010:
13H10, 16E45, 13D09}

\setcounter{tocdepth}{1}
\setcounter{section}{-1}

\maketitle

\section{Introduction}
\numberwithin{equation}{section}

Given a commutative noetherian ring $A$, and a finite sequence of elements $a_1,\dots,a_n$ in $A$,
a basic construction in commutative algebra is the quotient ring $A/(a_1,\dots,a_n)$.
In homological contexts, this operation is particularly well behaved when the sequence $a_1,\dots,a_n$ is an $A$-regular sequence.
For instance:\\
(*) If $A$ is a Gorenstein or a Cohen-Macaulay local ring and this sequence is regular,
then the quotient ring $A/(a_1,\dots,a_n)$ is also Gorenstein or Cohen-Macaulay.

A well known principle in homological algebra is that often results about functors that hold only under nice homological conditions,
hold unconditionally when one works with derived functors.

The aim of this paper is to show how this principle holds regarding (*).
We will show that the derived quotient of a Gorenstein or a Cohen-Macaulay ring with respect to any finite sequence of elements is again Gorenstein or Cohen-Macaulay.
To do this, let us first explain how to derive the operation $A \mapsto A/(a_1,\dots,a_n)$.

Considering $A$ as a $\mathbb{Z}[x_1,\dots,x_n]$-algebra
by letting $x_i \mapsto a_i$ for any $1 \le i \le n$,
we have a ring isomorphism
\[
A \otimes_{\mathbb{Z}[x_1,\dots,x_n]} \mathbb{Z} \cong A/(a_1,\dots,a_n).
\]
With this realization of $A/(a_1,\dots,a_n)$,
it is clear how to derive this operation,
by taking the derived functor of the tensor product.
It follows that we may consider
\begin{equation}\label{eqn:basicKoszul}
A \otimes^{\mrm{L}}_{\mathbb{Z}[x_1,\dots,x_n]} \mathbb{Z}
\end{equation}
as a commutative non-positive DG-ring which represents this derived quotient.
To compute (\ref{eqn:basicKoszul}) observe that the $\mathbb{Z}[x_1,\dots,x_n]$-algebra $\mathbb{Z}$
has a concrete flat DG-algebra resolution using the Koszul complex $K(\mathbb{Z}[x_1,\dots,x_n];x_1,\dots,x_n)$.
It follows from the base change property of the Koszul complex that
\[
A \otimes^{\mrm{L}}_{\mathbb{Z}[x_1,\dots,x_n]} \mathbb{Z} \cong
A \otimes_{\mathbb{Z}[x_1,\dots,x_n]} K(\mathbb{Z}[x_1,\dots,x_n];x_1,\dots,x_n) \cong
K(A;a_1,\dots,a_n).
\]

We see that one may consider the Koszul complex $K(A;a_1,\dots,a_n)$ as the derived functor of $A \mapsto A/(a_1,\dots,a_n)$.
Observe further that $\mrm{H}^0(K(A;a_1,\dots,a_n)) = A/(a_1,\dots,a_n)$,
as one would expect from a derived functor,
and that $K(A;a_1,\dots,a_n)$ is isomorphic to $A/(a_1,\dots,a_n)$ if $a_1,\dots,a_n$ is an $A$-regular sequence.	

The Koszul complex over a commutative ring has the structure of a commutative non-positive DG-ring.
It turns out that one can generalize the Koszul complex construction over a commutative DG-ring $A$,
but instead of depending on elements of $A$,
it is associated to a finite sequence of elements in $\mrm{H}^0(A)$ (see Definition \ref{def:KosDG} below).
The main result of this paper states:

\begin{cthm}\label{cthm:main}
\leavevmode
\begin{enumerate}[wide, labelindent=0pt]
\item Let $A$ be a Cohen-Macaulay ring, and let $a_1,\dots,a_n$ be a finite sequence of elements in $A$.
Then the Koszul complex $K(A;a_1,\dots,a_n)$ is a Cohen-Macaulay DG-ring.
\item More generally, let $A$ be a Cohen-Macaulay DG-ring with constant amplitude,
and let $\bar{a}_1,\dots,\bar{a}_n \in \mrm{H}^0(A)$.
Then the Koszul complex $K(A;\bar{a}_1,\dots,\bar{a}_n)$ is a Cohen-Macaulay DG-ring.
\item Let $A$ be commutative noetherian DG-ring with bounded cohomology.
Given any sequence $\bar{a}_1,\dots,\bar{a}_n \in \mrm{H}^0(A)$,
if $A$ is Gorenstein then $K(A;\bar{a}_1,\dots,\bar{a}_n)$ is a Gorenstein DG-ring.
The converse holds if $(\bar{a}_1,\dots,\bar{a}_n) \subseteq \opn{rad}(\mrm{H}^0(A))$.
\end{enumerate}
\end{cthm}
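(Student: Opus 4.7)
My plan is induction on $n$ for all three parts. The key recursive identity is $K(A;\bar{a}_1,\ldots,\bar{a}_n)\simeq K(K(A;\bar{a}_1,\ldots,\bar{a}_{n-1});\bar{a}_n)$, reducing to the case $n=1$. Since an ordinary Cohen-Macaulay ring is a DG-ring of constant amplitude zero, part (1) becomes a special case of part (2), so the real content lies in the cases $n=1$ of parts (2) and (3).

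For part (2), fix $\bar{a}\in\mrm{H}^0(A)$ and set $B:=K(A;\bar{a})$. The Cohen-Macaulay property is local on $\opn{Spec}\mrm{H}^0$, so I would localize at a maximal ideal $\bar{\m}\subseteq\mrm{H}^0(B)=\mrm{H}^0(A)/(\bar{a})$, which pulls back to a maximal ideal $\bar{\m}'\subseteq\mrm{H}^0(A)$ containing $\bar{a}$. Apply $\mrm{R}\Gamma_{\bar{\m}'}$ to the distinguished triangle $A\xrightarrow{\bar{a}}A\to B\to A[1]$; the resulting long exact sequence expresses each $\mrm{H}^i\mrm{R}\Gamma_{\bar{\m}}(B)$ as an extension of the kernel of $\bar{a}$ on $\mrm{H}^{i+1}\mrm{R}\Gamma_{\bar{\m}'}(A)$ by the cokernel of $\bar{a}$ on $\mrm{H}^i\mrm{R}\Gamma_{\bar{\m}'}(A)$. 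The Cohen-Macaulay hypothesis pins down the support of $\mrm{R}\Gamma_{\bar{\m}'}(A)$ to an interval of length $\amp(A)$, and the constant amplitude hypothesis ensures that the extremal cohomology modules do not vanish under localization. A Nakayama-type argument on these extremal pieces then forces the top and bottom nonzero cohomologies of $\mrm{R}\Gamma_{\bar{\m}}(B)$ to sit in exactly the right degrees, yielding $\amp(\mrm{R}\Gamma_{\bar{\m}}(B))=\amp(B)$, which is the Cohen-Macaulay condition for $B$.

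For the forward direction of part (3), I would use the characterization of Gorenstein DG-rings by self-duality: $A$ is Gorenstein when $A$ itself is a dualizing DG-module over $A$. Combined with the self-duality of the Koszul complex, $\mrm{R}\opn{Hom}_A(K(A;\bar{a}),A)\simeq K(A;\bar{a})[-1]$, this transfers the dualizing property to $K(A;\bar{a})$ over itself and hence establishes its Gorenstein-ness. The converse, under the radical hypothesis, relies on a derived Nakayama descent: finite injective dimension of the Koszul complex over itself, together with the fact that $\bar{a}_1,\ldots,\bar{a}_n$ lie in the Jacobson radical of $\mrm{H}^0(A)$, should push finiteness of injective dimension back to $A$, giving $A$ Gorenstein.

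The main obstacle will be the amplitude bookkeeping in part (2): both endpoints of the support interval of $\mrm{R}\Gamma_{\bar{\m}}(B)$ must be controlled simultaneously, and this is precisely where the constant amplitude hypothesis on $A$ is used. The converse direction in part (3) is also subtle, and its proof likely mirrors classical descent arguments for Gorenstein rings along surjections with nilpotent kernel, now transposed to the derived setting.
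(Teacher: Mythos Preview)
Your inductive scheme for part (2) has a genuine gap: the hypothesis \emph{constant amplitude} is not preserved under passage to a Koszul complex, so once you form $K(A;\bar{a}_1)$ you can no longer invoke the $n=1$ case to adjoin $\bar{a}_2$. Concretely, take $A=k[x,y]\times k[z]$ (a Cohen-Macaulay ring, hence constant amplitude zero) and $\bar{a}=(x,0)$. Then $\mrm{H}^0(K(A;\bar{a}))=k[y]\times k[z]$ while $\mrm{H}^{-1}(K(A;\bar{a}))=\opn{Ann}_A(\bar{a})=0\times k[z]$, whose support is only the second component. So $K(A;\bar{a})$ is Cohen-Macaulay but does \emph{not} have constant amplitude, and your induction stalls. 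The paper's Example~\ref{exa:counter} confirms that constant amplitude is genuinely needed in the hypothesis, and the paper never claims it for the conclusion; this is why the paper does not argue by induction on $n$.

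The paper's route is quite different. It treats the full sequence $\bar{a}_1,\dots,\bar{a}_n$ at once: first localize, then pass to the derived $\bar{\m}$-adic completion (Theorem~\ref{thm:koszulCompletion} shows the Koszul complex commutes with this), so that $A$ acquires a dualizing DG-module $R$. One then computes $\amp(K)$ and $\amp(\mrm{R}\opn{Hom}_A(K,R))$ directly, using the explicit depth formula of Theorem~\ref{thm:seq-depth} (itself the payoff of the dimension theory developed in Section~\ref{sec:dim}), and checks they agree. Your local-cohomology triangle idea for $n=1$ is not unreasonable as an alternative to the dualizing-module computation, but it cannot by itself carry the induction.

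For part (3) your outline is essentially correct and close to the paper's: the forward direction is exactly the self-duality $\mrm{R}\opn{Hom}_A(K,A)\cong K[-n]$ combined with the fact that $\mrm{R}\opn{Hom}_A(K,-)$ sends dualizing to dualizing. For the converse the paper makes your ``derived Nakayama descent'' precise via the adjunction
\[
\mrm{R}\opn{Hom}_K\bigl(\k,\mrm{R}\opn{Hom}_A(K,A)\bigr)\cong \mrm{R}\opn{Hom}_A(\k,A),
\]
together with the characterization of Gorenstein local DG-rings by $\mrm{R}\opn{Hom}_A(\k,A)\cong \k[j]$ from \cite{FIJ}.
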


Theorem \ref{cthm:main}(1) and Theorem \ref{cthm:main}(2) are completely new, 
and no other result of this kind appeared before in the literature.
The vast majority of this paper develops the tools needed to prove this.
The \textbf{constant amplitude} assumption in Theorem \ref{cthm:main}(2) means that
\[
\opn{Supp}(\mrm{H}^{\inf(A)}(A)) = \opn{Spec}(\mrm{H}^0(A)).
\]
This assumption is necessary.
In Example \ref{exa:counter} we show that the result is false without this assumption.
This condition is automatically satisfied if $\opn{Spec}(\mrm{H}^0(A))$ is irreducible.
See Remark \ref{rem:condition} for a discussion about this assumption.

A particular case of Theorem \ref{cthm:main}(3) was first proved by Avramov and Golod in \cite{AG},
this was later generalized by Frankild and J{\o}rgensen in \cite[Theorem 4.9]{FJ}.
There, it was proved that if $(A,\m)$ is a noetherian local ring,
and if $a_1,\dots,a_n \in \m$,
then $A$ is Gorenstein if and only if $K(A;a_1,\dots,a_n)$ is Gorenstein.
Our result generalizes the theorem of Frankild and J{\o}rgensen in two ways,
allowing $A$ to be non-local, and moreover, letting $A$ be a DG-ring instead of a ring.

The paper is organized as follows. 
Section \ref{sec:pre} consists of various preliminaries, 
where we recall basics about commutative DG-rings, Gorenstein DG-rings and Cohen-Macaulay DG-rings.

In Section \ref{sec:Koszul} we introduce and study, following previous work of Minamoto, the Koszul DG-module over a commutative DG-ring.
This section extends previously known results about Koszul complexes to the DG-setting.
It contains one relatively difficult result, Theorem \ref{thm:koszulCompletion},
which shows that the Koszul complex of a noetherian local DG-ring commutes with derived adic completion.
The proof is rather difficult because our noetherian assumption is only on the level of cohomology,
so one cannot realize adic completion using a tensor product. 
This implies that the usual proof that the Koszul complex commutes with adic completion because of base change reasons does not work.
Instead we take a different strategy, realizing the derived adic completion using a DG version of the Matlis theory of injective modules.

Section \ref{sec:dim} is devoted to the study of dimension theory over Cohen-Macaulay DG-rings.
In Theorem \ref{thm:depth-bound} we generalize a classical result from commutative algebra to the DG setting,
showing that over a noetherian local DG-ring,
the sequential depth of a DG-module $M$ is bounded by the coheight of any associated prime ideal of $M$.
Using this, in Theorem \ref{thm:seq-depth}, the main result of that section,
we obtain an explicit formula for the sequential depth of any ideal in a Cohen-Macaulay local DG-ring with constant amplitude.

The results of Section \ref{sec:dim} demonstrate a new technique to study the dimension theory of a noetherian ring $A$:
find a Cohen-Macaulay DG-ring $B$ such that $\mrm{H}^0(B) = A$,
and then deduce results from the Cohen-Macaulay structure of $B$ about the dimension theory of $A$.
A corollary of Theorem \ref{cthm:main}(1) is that one can always find such a $B$, 
provided that $A$ is a quotient of some Cohen-Macaulay ring.

In Section \ref{sec:KosCMDG}, using all these tools, 
we prove Theorem \ref{cthm:main}.
The strategy to prove Theorem \ref{cthm:main}(2) is to, first,
using the results of Section \ref{sec:Koszul},
reduce to the case where $(A,\bar{\m})$ is local and derived $\bar{\m}$-adically complete.
This implies that $A$, and hence also its Koszul complex, have dualizing DG-modules.
We then use the explicit formulas obtained in Section \ref{sec:dim} to compute the amplitude of the Koszul complex over $A$,
and of its dualizing DG-module, showing they are equal.

In the final Section \ref{sec:APP} we discuss some applications of Theorem \ref{cthm:main}.
We recall the notion of a homotopy fiber of a local homomorphism,
and show:

\begin{ccor}
Let $f:X \to Y$ be a morphism of schemes,
and assume that $X$ is Cohen-Macaulay and that $Y$ is nonsingular.
Then the homotopy fiber of $f$ at every point is a Cohen-Macaulay DG-ring.
\end{ccor}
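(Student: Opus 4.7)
The plan is to reduce the corollary to a local calculation at each point and then identify the homotopy fiber with a Koszul complex over a Cohen-Macaulay local ring, so that Theorem~\ref{cthm:main}(1) applies directly.

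First I would localize. Fix a point $x \in X$ and set $y = f(x) \in Y$; the morphism $f$ induces a local homomorphism $\varphi : \mcal{O}_{Y,y} \to \mcal{O}_{X,x}$, and the homotopy fiber of $f$ at $x$ is, by definition, the commutative non-positive DG-ring
\[
\mcal{O}_{X,x} \otimes^{\mrm{L}}_{\mcal{O}_{Y,y}} \kappa(y).
\]
The assumptions now translate to: the source $\mcal{O}_{X,x}$ is a Cohen-Macaulay noetherian local ring, and the target $\mcal{O}_{Y,y}$ is a regular local ring.

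Next I would use nonsingularity of $Y$ to present the residue field $\kappa(y)$ by a Koszul complex. Since $(\mcal{O}_{Y,y}, \m_y)$ is regular, $\m_y$ is generated by a regular sequence of parameters $a_1, \dots, a_n$, so the Koszul complex $K(\mcal{O}_{Y,y}; a_1, \dots, a_n)$ is a flat DG-algebra resolution of $\kappa(y)$. Applying the base-change property of the Koszul complex then gives
\[
\mcal{O}_{X,x} \otimes^{\mrm{L}}_{\mcal{O}_{Y,y}} \kappa(y) \;\simeq\; \mcal{O}_{X,x} \otimes_{\mcal{O}_{Y,y}} K(\mcal{O}_{Y,y}; a_1, \dots, a_n) \;\cong\; K\bigl(\mcal{O}_{X,x};\, \varphi(a_1), \dots, \varphi(a_n)\bigr),
\]
which exhibits the homotopy fiber as a Koszul complex over the Cohen-Macaulay ring $\mcal{O}_{X,x}$. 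Invoking Theorem~\ref{cthm:main}(1) with $A = \mcal{O}_{X,x}$ and the sequence $\varphi(a_1),\dots,\varphi(a_n)$ then immediately yields that this DG-ring is Cohen-Macaulay, finishing the proof.

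The main conceptual point, rather than a genuine obstacle, is that the images $\varphi(a_i) \in \mcal{O}_{X,x}$ need not form a regular sequence, since $f$ is not assumed flat. Consequently the strict quotient $\mcal{O}_{X,x}/(\varphi(a_1), \dots, \varphi(a_n))$ will in general fail to be Cohen-Macaulay and the classical principle $(*)$ from the introduction is useless here. It is precisely the unconditional strength of Theorem~\ref{cthm:main}(1), valid for arbitrary finite sequences with no regularity hypothesis, that makes the derived fiber behave well and delivers the corollary.
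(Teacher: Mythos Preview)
Your proof is correct and follows essentially the same route as the paper: localize to obtain a local homomorphism from a regular local ring $A=\mcal{O}_{Y,y}$ into a Cohen-Macaulay local ring $B=\mcal{O}_{X,x}$, resolve $\kappa(y)$ by the Koszul complex on a regular system of parameters, base-change to identify the homotopy fiber with a Koszul complex over $B$, and then invoke the main theorem. The paper's Corollary~\ref{cor:hfib} is exactly this argument (stated a bit more generally with $B$ a DG-ring and appealing to Theorem~\ref{thm:main} rather than the ring case of Theorem~\ref{cthm:main}(1)), and the geometric corollary is simply its restatement.
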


This is a particular case of a more general result we prove,
that is also valid for $X$ being a derived scheme instead of a scheme.
Our next application is a generalization of the miracle flatness theorem.
The classical miracle flatness theorem states that a local homomorphism $\varphi$ from a regular local ring $A$ to a Cohen-Macaulay local ring $B$ is flat if and only if the dimension of the fiber of $\varphi$ is equal to $\dim(B) - \dim(A)$.
In Corollary \ref{cor:miracle},
we generalize this in two ways, 
obtaining an explicit formula for the flat dimension of $B$ over $A$ in terms of the dimension of the fiber,
and furthermore, allowing $B$ to be a Cohen-Macaulay DG-ring instead of a Cohen-Macaulay ring.
Our final application is a generalization of the following characterization of Cohen-Macaulay rings.
Recall that if a noetherian local ring $B$ contains a regular local ring $A$,
such that the extension $A\subseteq B$ is finite,
then $B$ is Cohen-Macaulay if and only if $B$ is a free $A$-module.
We show that the same holds in the DG-setting,
showing in Corollary \ref{cor:DGreg},
that in such a finite situation,
the Cohen-Macaulay DG-rings are exactly the DG-rings which have a free resolution over $A$ whose length is equal to its cohomological amplitude.

\section{Preliminaries}\label{sec:pre}

All rings in this paper are commutative and unital,
and in most cases they will also be noetherian.
For a noetherian ring $A$,
we denote by $\dim(A)$ its Krull dimension.
If $M$ is an $A$-module, 
we let $\dim(M)$ denote the Krull dimension of $M$,
and the support of $M$ is the set $\opn{Supp}(M) = \{\p \in \opn{Spec}(A) \mid M_{\p} \ne 0\}$.

\subsection{Commutative DG-rings}
The main objects of study in this paper are commutative non-positive DG-rings.
By definition, these are graded rings $A = \bigoplus_{n=-\infty}^0 A^n$,
equipped with a $\mathbb{Z}$-linear differential $d:A \to A$.
We will be using cohomological gradings, 
so that $d$ is of degree $+1$.
The fact that $A$ is commutative means that for all $a,b 
\in A$ we have that $b\cdot a = (-1)^{\deg(a)\cdot \deg(b)}\cdot a \cdot b$,
and moreover, if $\deg(a)$ is odd, then $a^2 = 0$.
The multiplication of $A$ and the differential $d$ satisfy a Leibnitz rule:
\[
d(a\cdot b) = d(a)\cdot b + (-1)^{\deg(a)}\cdot a \cdot d(b).
\]
All DG-rings in this paper are assumed to be non-positive and commutative.
A reference for commutative DG-rings and their derived categories is the recent book \cite{YeBook}.
Other helpful introductions to the theory of DG-rings include \cite[Section 1]{YeSq} and \cite{BS}.

The category of all (not necessarily commutative) non-positive DG-rings will be denoted by $\cdg$.
This has a natural Quillen model structure. Inverting quasi-isomorphisms in $\cdg$,
we obtain its homotopy category which we will denote by $\ho(\cdg)$.

Given a commutative DG-ring $\K$,
and two commutative DG-algebras $A,B$ over $\K$,
the derived tensor product $A\otimes^{\mrm{L}}_{\K} B$ is a well defined functor in $\ho(\cdg)$.
We will always represent it using a commutative DG-ring, 
as it is always possible to do that.

Given a commutative DG-ring $A$, 
DG-modules over $A$ are by definition $\mathbb{Z}$-graded $A$-modules $M$ with a differential,
which satisfy a Leibnitz rule. 
The derived category of all DG-modules over $A$ will be denoted by $\cat{D}(A)$.
This is a triangulated category.

For a commutative non-positive DG-ring $A$, its bottom cohomology $\mrm{H}^0(A)$ is a commutative ring.
There is a natural map of DG-rings $\pi_A:A \to \mrm{H}^0(A)$.
Moreover, its degree zero part $\pi_A^0:A^0 \to \mrm{H}^0(A)$ is a surjection of commutative rings.

\subsection{Finiteness conditions}

We say that a commutative DG-ring $A$ has bounded cohomology if $H^n(A) = 0$ for all $n \ll 0$.
If $M$ is any DG-module over $A$, and $n \in \mathbb{Z}$,
then $\mrm{H}^n(M)$ has the structure of an $\mrm{H}^0(A)$-module.
In particular $\mrm{H}^n(A)$ is also an $\mrm{H}^0(A)$-module for all $n < 0$.
We say that $A$ is noetherian (called cohomologically pseudo-noetherian in \cite{YeBook}) if the ring $\mrm{H}^0(A)$ is noetherian,
and for all $n<0$, the $\mrm{H}^0(A)$-module $\mrm{H}^n(A)$ is finitely generated.
The main focus in this paper will be about commutative noetherian DG-rings.

We say that a DG-module $M$ over a noetherian DG-ring $A$ has finitely generated cohomology if $n \in \mathbb{Z}$,
the $\mrm{H}^0(A)$-module $\mrm{H}^n(M)$ is finitely generated.
The full triangulated subcategory of $\cat{D}(A)$ consisting of DG-modules with finitely generated cohomology is denoted by $\cat{D}_{\mrm{f}}(A)$. 
A DG-module $M$ is called bounded below (respectively above) if $H^n(M) = 0$ for all $n \ll 0$ (resp. $n \gg 0$),
and is called bounded if it both bounded below and bounded above.
The full triangulated subcategories of $\cat{D}(A)$ consisting of DG-modules which are bounded below, bounded above and bounded are denoted by
$\cat{D}^{-}(A)$, $\cat{D}^{+}(A)$ and $\cat{D}^{\mrm{b}}(A)$.
If $A$ is noetherian, 
we set $\cat{D}^{-}_{\mrm{f}}(A) = \cat{D}^{-}(A) \cap \cat{D}_{\mrm{f}}(A)$, 
and similarly for the other boundedness conditions.

For a DG-module $M$, we associate the following numbers, 
called the infimum, the supremum and the amplitude of $M$:
\[
\inf(M) := \inf\{n \in \mathbb{Z} \mid \mrm{H}^n(M) \ne 0\},
\quad
\sup(M) := \sup\{n \in \mathbb{Z} \mid \mrm{H}^n(M) \ne 0\},
\]
and $\amp(M) := \sup(M) - \inf(M)$.
These are sometimes called the cohomological infimum, cohomological supremum and cohomological amplitude of $M$,
but we omit this adjective, as the entire paper is of cohomological nature.

\subsection{Localization}
Given a prime ideal $\bar{\p} \in \opn{Spec}(\mrm{H}^0(A))$,
we define the localization of $A$ with respect to $\bar{\p}$ as follows:
let $\p := (\pi^0_A)^{-1}(\bar{\p}) \in \opn{Spec}(A^0)$,
and set $A_{\bar{\p}}:= A\otimes_{A^0} A^0_{\p}$.
For $M \in \cat{D}(A)$, we similarly set
$M_{\bar{\p}}:= M\otimes^{\mrm{L}}_A A_{\bar{\p}}$.
It then holds that $\mrm{H}^n(M_{\bar{\p}}) \cong (\mrm{H}^n(M))_{\bar{\p}}$,
where the right hand side is the usual localization of the $\mrm{H}^0(A)$-module $\mrm{H}^n(M)$.
A noetherian DG-ring $A$ is called local if the noetherian ring $\mrm{H}^0(A)$ is a local ring.
In that case, if $\bar{\m}$ is the maximal ideal of $\mrm{H}^0(A)$,
we will say that $(A,\bar{\m})$ is a noetherian local DG-ring.
If $A$ is a noetherian DG-ring,
and if $\bar{\p} \in \opn{Spec}(\mrm{H}^0(A))$,
it follows that $(A_{\bar{\p}},\bar{\p}\cdot \mrm{H}^0(A_{\bar{\p}}))$ is a noetherian local DG-ring.

\subsection{Dualizing DG-modules}

Given a commutative noetherian DG-ring $A$,
following \cite{FIJ,Ye1},
we say that a DG-module $R \in \cat{D}_{\mrm{f}}(A)$ is a dualizing DG-module if $R$ has finite injective dimension over $A$,
in the sense of \cite[Definition 12.4.8(2)]{YeBook},
and moreover, the natural map $A \to \mrm{R}\opn{Hom}_A(R,R)$ is an isomorphism.
In case $A$ is a ring, this is called a dualizing complex over $A$,
a notion introduced by Grothendieck.
If $(A,\bar{\m})$ is a noetherian local DG-ring,
and if $R,S$ are dualizing DG-modules over it,
then by \cite[Corollary 7.16]{Ye1},
there is some $n \in \mathbb{Z}$ such that $R \cong S[n]$.
If $A$ is a noetherian DG-ring with bounded cohomology,
and $R$ is a dualizing DG-module over $A$,
by \cite[Theorem 4.1(2)]{ShCM}, 
there is an inequality $\amp(R) \ge \amp(A)$.

\subsection{Gorenstein DG-rings}

Following \cite{AF,FI,FIJ},
we say that a noetherian local DG-ring $(A,\bar{\m})$ is Gorenstein,
if $A$ is a dualizing DG-module over itself,
or, equivalently, if $A$ has finite injective dimension over itself.
This implies that $A$ has bounded cohomology.
If $A$ is a noetherian DG-ring, 
we will say that it is Gorenstein if for all $\bar{\p} \in \opn{Spec}(\mrm{H}^0(A))$,
the local DG-ring $A_{\bar{\p}}$ is Gorenstein.
It is sufficient to check this for all maximal ideals in $\mrm{H}^0(A)$.
Our definition in the non-local case is slightly more general than the commutative version of the definition in \cite{FI} 
as it also allows Gorenstein DG-rings $A$ with $\dim(\mrm{H}^0(A))$ being infinite.

\subsection{Derived completion}

Given a commutative DG-ring $A$,
and a finitely generated ideal $\bar{\a} \subseteq \mrm{H}^0(A)$,
we introduced in \cite[Section 4]{ShComp} the derived $\bar{\a}$-adic completion of $A$ with respect to $\bar{\a}$.
This is a commutative non-positive DG-ring,
denoted by $\mrm{L}\Lambda(A,\bar{\a})$.
If $A$ is noetherian then $\mrm{L}\Lambda(A,\bar{\a})$ is also noetherian,
and if $A$ has bounded cohomology then $\mrm{L}\Lambda(A,\bar{\a})$ also has bounded cohomology.
Moreover, if $(A,\bar{\m})$ is a noetherian local DG-ring,
then $\mrm{L}\Lambda(A,\bar{\m})$ is also a noetherian local DG-ring.
A local DG-ring $(A,\bar{\m})$ is called derived $\bar{\m}$-adically complete if
$A \cong \mrm{L}\Lambda(A,\bar{\m})$.
The DG-ring $\mrm{L}\Lambda(A,\bar{\m})$ is always derived $\bar{\m}$-adically complete.

\subsection{Injectives over DG-rings}

Given a commutative noetherian DG-ring $A$,
by \cite{MiInj,ShInj},
there exist a full subcategory $\opn{Inj}(A) \subseteq \cat{D}^{+}(A)$,
which is a DG version of the category of injective modules over a ring.
The Matlis classification of injectives holds in this setting,
so up to isomorphism, elements of $\opn{Inj}(A)$ are in bijection with $\opn{Spec}(\mrm{H}^0(A))$.
We will denote the element of $\opn{Inj}(A)$ which corresponds to a given $\bar{\p} \in \opn{Spec}(\mrm{H}^0(A))$ by $E(A,\bar{\p})$.

\subsection{Regular sequences and associated primes over commutative DG-rings}\label{sec:regSeq}

Following \cite[Section 2.2]{Mi}, \cite[Section 5]{ShCM},
as well as the earlier \cite{ChI,ChII},
given a commutative noetherian local DG-ring $(A,\bar{\m})$,
and given $M \in \cat{D}^{+}(A)$,
we say that an element $\bar{x} \in \bar{\m}$ is $M$-regular if it is $\mrm{H}^{\inf(M)}(M)$-regular.
That is, if the multiplication map 
\[
\bar{x}: \mrm{H}^{\inf(M)}(M) \to \mrm{H}^{\inf(M)}(M)
\]
is injective.
Inductively, we will say that a finite sequence $\bar{a}_1,\dots,\bar{a}_n \in \bar{\m}$ is $M$-regular
if $\bar{a}_1$ is $M$-regular,
and the sequence $\bar{a}_2,\dots,\bar{a}_n$ is $K(M;\bar{a}_1)$-regular.
Here, $K(M;\bar{a}_1)$ is the cone of the map $\bar{a}_1:M \to M$ in $\cat{D}(A)$,
or, equivalently, the Koszul DG-module of $M$ with respect to $\bar{a}_1$,
defined in Definition \ref{def:KosDG} below.
The maximal length of an $M$-regular sequence in $\bar{\m}$ is a well defined non-negative integer,
denoted by $\opn{seq.depth}_A(M)$,
and is called the sequential depth of $M$ over $A$.
The depth of $M$ is defined to be the number $\opn{depth}_A(M) := \inf(\mrm{R}\opn{Hom}_A(\mrm{H}^0(A)/\bar{\m},M))$,
and the local cohomology Krull dimension of $M$ is the number
\[
\lcdim(M) := \sup_{n \in \mathbb{Z}} \left\{\dim(\mrm{H}^{n}(M)) + n\right\}.
\]

The set of associated prime ideals of $M$, denoted by $\opn{Ass}_A(M)$,
is by definition the set $\{\bar{\p} \in \opn{Supp}_A(M) \mid \depth_{A_{\bar{\p}}}(M_{\bar{\p}}) = \inf(M_{\bar{\p}})\}$,
where we let $\opn{Supp}_A(M) = \{\bar{\p} \in \opn{Spec}(\mrm{H}^0(A)) \mid M_{\bar{\p}} \ncong 0\}$.
In a dual manner, we define the set $W_0^A(M)$
to be the set $\{\bar{\p} \in \opn{Supp}_A(M) \mid \lcdim_{A_{\bar{\p}}}(M_{\bar{\p}}) = \sup(M_{\bar{\p}})\}$.
If $M \in \cat{D}^{\mrm{b}}_{\mrm{f}}(A)$,
then by \cite[Propositions 5.9 and 5.11]{ShCM}, 
the sets $\opn{Ass}_A(M)$ and $W_0^A(M)$ are finite sets.

\subsection{Cohen-Macaulay DG-rings}

Given a commutative noetherian local DG-ring $(A,\bar{\m})$ with bounded cohomology,
by \cite[Corollary 5.5]{ShCM},
there is an inequality $\opn{seq.depth}_A(A) \le \dim(\mrm{H}^0(A))$.
If there is an equality $\opn{seq.depth}_A(A) = \dim(\mrm{H}^0(A))$ then $A$ is called a local-Cohen-Macaulay DG-ring.
The reason for the name local-Cohen-Macaulay is because this definition is not stable under localization.
We say that commutative noetherian DG-ring $A$ with bounded cohomology is Cohen-Macaulay if for all $\bar{\p} \in \opn{Spec}(\mrm{H}^0(A))$,
the local DG-ring $A_{\bar{\p}}$ is local-Cohen-Macaulay. 
All Gorenstein DG-rings are Cohen-Macaulay.

If a noetherian local DG-ring $A$ with bounded cohomology has a dualizing DG-module $R$,
then $A$ is local-Cohen-Macaulay if and if and only if $\amp(A) = \amp(R)$.
Many other equivalent conditions to the local-Cohen-Macaulay condition are given in \cite[Theorem 2]{ShCM}.
If $A$ is local-Cohen-Macaulay and $\opn{Spec}(\mrm{H}^0(A))$ is irreducible,
or if $\opn{Supp}(\mrm{H}^{\inf(A)}(A)) = \opn{Spec}(\mrm{H}^0(A))$,
then $A$ is Cohen-Macaulay.

\section{Koszul DG-modules over commutative DG-rings}\label{sec:Koszul}

The aim of this section is to study the Koszul complex
over commutative DG-rings.
Minamoto began such a study in \cite[Section 2.2]{Mi},
where he mainly focused on the Koszul complex with respect to a single regular element.
We first recall the situation over ordinary commutative rings.

Given a commutative ring $A$,
and an element $a \in A$,
recall that the Koszul complex $K(A;a)$ is the complex
\[
0 \to A \xrightarrow{\cdot a} A \to 0,
\]
concentrated in cohomological degrees $-1,0$.
This is has the structure of a commutative DG-ring by letting $a\cdot b = 0$ if $\deg(a) = \deg(b) = -1$,
and by making $K(A;a)^{-1}$ a free $A$-module of rank $1$ with basis $1$.
The identity map of $A$ induces a natural map of DG-rings $\kappa:A \to K(A;a)$.
Observe that there is an isomorphism of complexes of $A$-modules:
\begin{equation}\label{eqn:dual1}
\opn{Hom}_A(K(A;a),A) \cong K(A;a)[-1].
\end{equation}

Given a sequence $a_1,\dots,a_n \in A$,
the Koszul complex associated to $A$ and $a_1,\dots,a_n$ is defined by:
\[
K(A;a_1,\dots,a_n) := K(A;a_1)\otimes_A \dots \otimes_A K(A;a_n),
\]
where the tensor product is taken in the category of commutative DG-algebras over $A$.
As a complex of $A$-modules, $K(A;a_1,\dots,a_n)$ is a bounded complex of finitely generated free $A$-modules.

As is well known (see for instance \cite[Theorem 16.5]{Mat}),
if $a_1,\dots,a_n$ is an $A$-regular sequence,
then the natural map
\[
K(A;a_1,\dots,a_n) \to \mrm{H}^0\left(K(A;a_1,\dots,a_n)\right) = A/(a_1,\dots,a_n).
\]
is a quasi-isomorphism of DG-rings. 
It follows that in this case, $K(A;a_1,\dots,a_n)$ is a semi-free commutative DG-algebra resolution of $A/(a_1,\dots,a_n)$ over $A$.

It follows from (\ref{eqn:dual1}) that there is an isomorphism
\begin{equation}\label{eqn:dualn}
\opn{Hom}_A(K(A;a_1,\dots,a_n),A) \cong K(A;a_1,\dots,a_n)[-n]
\end{equation}
of complexes of $A$-modules.

If $M$ is an $A$-module,
we let
\[
K(M;a_1,\dots,a_n) := K(A;a_1,\dots,a_n)\otimes_A M.
\]
This is a DG-module over $K(A;a_1,\dots,a_n)$.

If $A \to B$ is a map of commutative rings,
and if $f(a_i) = b_i$,
we obtain an isomorphism of DG-rings:
\begin{equation}\label{eqn:base-change}
K(A;a_1,\dots,a_n)\otimes_A B \cong K(B;b_1,\dots,b_n).
\end{equation}

We now define the notions of Koszul DG-modules and Koszul DG-rings over a commutative DG-ring,
following \cite[Section 2.2]{Mi}.

\begin{dfn}\label{def:KosDG}
Let $A$ be a commutative DG-ring,
and let $\bar{a}_1,\dots,\bar{a}_n \in \mrm{H}^0(A)$.
\begin{enumerate}
\item We define the Koszul DG-ring $K(A;\bar{a}_1,\dots,\bar{a}_n)$ associated to $A$ and $\bar{a}_1,\dots,\bar{a}_n$ as follows:
for each $1 \le i \le n$, choose some $a_i \in A^0$, 
such that $\pi^0_A(a_i) = \bar{a}_i$.
Give $A$ a DG-algebra structure over $\mathbb{Z}[x_1,\dots,x_n]$ by letting $x_i \mapsto a_i$,
and finally, 
define
\[
K(A;\bar{a}_1,\dots,\bar{a}_n) := A\otimes^{\mrm{L}}_{\mathbb{Z}[x_1,\dots,x_n]} \mathbb{Z},
\]
where the derived tensor product is calculated in the category of commutative DG-rings.
\item Given a DG-module $M \in \cat{D}(A)$,
we define the Koszul DG-module associated to $M$ and $\bar{a}_1,\dots,\bar{a}_n$ by
\[
K(M;\bar{a}_1,\dots,\bar{a}_n) := M\otimes^{\mrm{L}}_{\mathbb{Z}[x_1,\dots,x_n]} \mathbb{Z} \in \cat{D}(K(A;\bar{a}_1,\dots,\bar{a}_n)).
\]
\end{enumerate}
\end{dfn}

In \cite[Section 2.2]{Mi}, what we denote here by $K(A;\bar{x})$ was denoted by $A//\bar{x}$.
We will soon show that this definition is independent of the chosen lifts of $\bar{a}_1,\dots,\bar{a}_n$,
but to do this, we first need the following:

\begin{prop}\label{prop:KosDGcon}
Let $A$ be a commutative DG-ring,
and let $\bar{a}_1,\dots,\bar{a}_n,\bar{b}_1,\dots,\bar{b}_m \in \mrm{H}^0(A)$.
After choosing some lifts $a_1,\dots,a_n,b_1,\dots,b_m \in A^0$,
there is an isomorphism of DG-rings
\[
K(A;\bar{a}_1,\dots,\bar{a}_n) \otimes^{\mrm{L}}_A K(A;\bar{b}_1,\dots,\bar{b}_m) \cong K(A;\bar{a}_1,\dots,\bar{a}_n,\bar{b}_1,\dots,\bar{b}_m).
\]
\end{prop}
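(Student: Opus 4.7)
The plan is to reduce the derived identity to a formal manipulation with concrete (underived) Koszul complexes. First I would choose lifts $a_i \in A^0$ of $\bar{a}_i$ and $b_j \in A^0$ of $\bar{b}_j$, giving $A$ the structure of a commutative DG-algebra over $R := \mathbb{Z}[x_1,\dots,x_n]$, over $S := \mathbb{Z}[y_1,\dots,y_m]$, and over $R\otimes_{\mathbb{Z}} S = \mathbb{Z}[x_1,\dots,x_n,y_1,\dots,y_m]$ simultaneously (via $x_i \mapsto a_i$, $y_j \mapsto b_j$). Recall that the classical Koszul complex $K(R;x_1,\dots,x_n)$ is the exterior $R$-algebra on degree $-1$ generators $e_1,\dots,e_n$ with $d(e_i) = x_i$, hence a bounded complex of finitely generated free $R$-modules; since $x_1,\dots,x_n$ is a regular sequence in $R$, it is a semi-free commutative DG-algebra resolution of $\mathbb{Z}$ over $R$. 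Therefore, by the definition of the Koszul DG-ring together with \eqref{eqn:base-change},
\[
K(A;\bar{a}_1,\dots,\bar{a}_n) \;\cong\; A\otimes_{R} K(R;x_1,\dots,x_n) \;\cong\; K(A;a_1,\dots,a_n)
\]
as commutative DG-algebras over $A$. The same argument, applied to $S$ and to $R\otimes_{\mathbb{Z}}S$, gives the analogous identifications for $K(A;\bar{b}_1,\dots,\bar{b}_m)$ and $K(A;\bar{a}_1,\dots,\bar{a}_n,\bar{b}_1,\dots,\bar{b}_m)$.

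Next I would observe that $K(A;a_1,\dots,a_n)$ is a bounded complex of free graded $A$-modules, hence K-flat as a DG-module over $A$; therefore the derived tensor product over $A$ agrees with the ordinary one, and
\[
K(A;\bar{a}_1,\dots,\bar{a}_n)\otimes^{\mrm{L}}_{A} K(A;\bar{b}_1,\dots,\bar{b}_m) \;\cong\; K(A;a_1,\dots,a_n)\otimes_{A} K(A;b_1,\dots,b_m).
\]
By the definition $K(A;c_1,\dots,c_k) := K(A;c_1)\otimes_A\cdots\otimes_A K(A;c_k)$ of the iterated Koszul complex, together with the associativity of the tensor product in the category of commutative DG-algebras over $A$, the right hand side is canonically isomorphic to $K(A;a_1,\dots,a_n,b_1,\dots,b_m)$. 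Applying the first step once more identifies this DG-algebra with $K(A;\bar{a}_1,\dots,\bar{a}_n,\bar{b}_1,\dots,\bar{b}_m)$, yielding the desired isomorphism.

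The main point requiring care is the first identification above in the DG setting: one needs that $K(R;x_1,\dots,x_n)$ is K-flat (equivalently, semi-free as a commutative DG-algebra) over $R$, so that computing the derived tensor product $A\otimes^{\mrm{L}}_R \mathbb{Z}$ is legitimately modelled by the plain tensor product $A\otimes_R K(R;x_1,\dots,x_n)$, even though $A$ itself is merely a DG-ring. Once this is granted, the rest of the proof is purely formal associativity, requiring no regularity or nonvanishing hypothesis on the $\bar{a}_i$ and $\bar{b}_j$, which is precisely why the proposition holds in full generality and, in particular, shows that Definition \ref{def:KosDG} is independent of the choice of lifts (apply the proposition with $m = 0$ after replacing one set of lifts by another).
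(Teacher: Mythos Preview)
Your argument for the proposition itself is correct and is essentially the paper's proof: both compute each Koszul DG-ring by the explicit semi-free resolution $K(\mathbb{Z}[x_1,\dots,x_n];x_1,\dots,x_n)$ of $\mathbb{Z}$, and then conclude by associativity of the (derived) tensor product; you pass to underived tensor products via K-flatness right away, while the paper routes everything through the common polynomial ring $\mathbb{Z}[x_1,\dots,x_n,y_1,\dots,y_m]$, but this is only a cosmetic difference.

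Your closing parenthetical, however, is wrong. Taking $m=0$ in the proposition gives only the tautology $K(A;\bar{a}_1,\dots,\bar{a}_n)\otimes^{\mrm{L}}_A A \cong K(A;\bar{a}_1,\dots,\bar{a}_n)$ and says nothing about comparing two different sets of lifts $a_i$ and $a_i'$. Independence of the lifts is a genuinely separate statement (Proposition~\ref{prop:kosInd}), proved by induction on $n$ using the present proposition \emph{together with} Minamoto's nontrivial argument for the case $n=1$; that base case does not follow from associativity alone.
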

\begin{proof}
By definition, we have that
\[
K(A;\bar{a}_1,\dots,\bar{a}_n) \otimes^{\mrm{L}}_A K(A;\bar{b}_1,\dots,\bar{b}_m) =
\left( A\otimes^{\mrm{L}}_{\mathbb{Z}[x_1,\dots,x_n]} \mathbb{Z} \right) \otimes^{\mrm{L}}_A 
\left( A\otimes^{\mrm{L}}_{\mathbb{Z}[y_1,\dots,y_m]} \mathbb{Z} \right). 
\]
Give $A$ the structure of a DG-algebra over $\mathbb{Z}[x_1,\dots,x_n,y_1,\dots,y_m]$ by letting $x_i \mapsto a_i$, $y_i \mapsto b_i$.
Since the maps $\mathbb{Z}[x_1,\dots,x_n] \to A$ and $\mathbb{Z}[y_1,\dots,y_m] \to A$ factor as
\[
\mathbb{Z}[x_1,\dots,x_n] \to \mathbb{Z}[x_1,\dots,x_n,y_1,\dots,y_m] \to A
\]
and
\[
\mathbb{Z}[y_1,\dots,y_m] \to \mathbb{Z}[x_1,\dots,x_n,y_1,\dots,y_m] \to A,
\]
and since there are isomorphisms of DG-algebras
\[
\mathbb{Z} \cong K(\mathbb{Z}[x_1,\dots,x_n];x_1,\dots,x_n)
\]
over $\mathbb{Z}[x_1,\dots,x_n]$
and
\[
\mathbb{Z} \cong K(\mathbb{Z}[y_1,\dots,y_m];y_1,\dots,y_m)
\]
over $\mathbb{Z}[y_1,\dots,y_m]$,
by (\ref{eqn:base-change}), 
we know that there are isomorphisms
\[
\left( A\otimes^{\mrm{L}}_{\mathbb{Z}[x_1,\dots,x_n]} \mathbb{Z} \right) \cong
\left( A\otimes^{\mrm{L}}_{\mathbb{Z}[x_1,\dots,x_n,y_1,\dots,y_m]} K(\mathbb{Z}[x_1,\dots,x_n,y_1,\dots,y_m];x_1,\dots,x_n) \right)
\]
and
\[
\left( A\otimes^{\mrm{L}}_{\mathbb{Z}[y_1,\dots,y_m]} \mathbb{Z} \right) \cong
\left( A\otimes^{\mrm{L}}_{\mathbb{Z}[x_1,\dots,x_n,y_1,\dots,y_m]} K(\mathbb{Z}[x_1,\dots,x_n,y_1,\dots,y_m];y_1,\dots,y_m) \right).
\]
Combining these with the isomorphism
\begin{gather*}
K(\mathbb{Z}[x_1,\dots,x_n,y_1,\dots,y_m];x_1,\dots,x_n) \otimes^{\mrm{L}}_{\mathbb{Z}[x_1,\dots,x_n,y_1,\dots,y_m]}\\
K(\mathbb{Z}[x_1,\dots,x_n,y_1,\dots,y_m];y_1,\dots,y_m) \cong\\
K(\mathbb{Z}[x_1,\dots,x_n,y_1,\dots,y_m];x_1,\dots,x_n,y_1,\dots,y_m)
\end{gather*}
and with associativity of the derived tensor product, we obtain the required result.
\end{proof}

\begin{prop}\label{prop:kosInd}
Up to an isomorphism in $\ho(\cdg)$,
Definition \ref{def:KosDG}(1) is independent of the chosen $a_1,\dots,a_n$.
\end{prop}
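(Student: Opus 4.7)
The plan is to use Proposition \ref{prop:KosDGcon} to reduce to the single-element case $n=1$, and then to write down an explicit isomorphism of commutative DG-algebras between the two Koszul DG-rings obtained from different lifts.

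For the reduction, suppose $a_1, \dots, a_n$ and $a'_1, \dots, a'_n$ are two choices of lifts of $\bar{a}_1, \dots, \bar{a}_n$. By Proposition \ref{prop:KosDGcon} applied to each choice, both versions of $K(A; \bar{a}_1, \dots, \bar{a}_n)$ are isomorphic, in $\ho(\cdg)$, to the derived tensor product $K(A; \bar{a}_1) \otimes^{\mrm{L}}_A \cdots \otimes^{\mrm{L}}_A K(A; \bar{a}_n)$, with the $i$-th factor defined using $a_i$ in one case and $a'_i$ in the other. It therefore suffices to treat each tensor factor separately, that is, to handle $n=1$.

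For $n=1$, note that since $A$ is non-positive we have $A^1 = 0$ and hence $A^0 = Z^0(A)$, so any lift $a \in A^0$ of $\bar{a}$ is automatically a cocycle. The free commutative DG-algebra $\mathbb{Z}[x]\langle e\rangle$ on a generator $e$ of cohomological degree $-1$ with $de = x$ is a cofibrant (in particular K-flat) resolution of $\mathbb{Z}$ over $\mathbb{Z}[x]$, and base change along $x \mapsto a$ produces the concrete model
\[
K(A; \bar{a}) \simeq A\langle e \rangle, \qquad de = a,
\]
where $A\langle e\rangle = A \oplus A\cdot e$ as a graded $A$-module. Now let $a, a' \in A^0$ be two lifts of $\bar{a}$. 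Then $a - a' \in \ker(\pi^0_A) = d(A^{-1})$, so we may choose $c \in A^{-1}$ with $dc = a - a'$. Define a morphism of graded $A$-algebras by $\phi|_A = \mrm{id}_A$ and $\phi(e) := e' + c$. Since $e'$ and $c$ both lie in odd cohomological degree, strict graded commutativity yields $(e')^2 = c^2 = 0$ and $e'c + ce' = 0$, so $(e'+c)^2 = 0$; thus $\phi$ respects the defining relation $e^2 = 0$. Compatibility with the differential is immediate from
\[
d\phi(e) = de' + dc = a' + (a - a') = a = \phi(de),
\]
and the analogous formula $e' \mapsto e - c$ defines a two-sided inverse, making $\phi$ an isomorphism of commutative DG-$A$-algebras, hence an isomorphism in $\ho(\cdg)$.

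The main obstacle is upgrading the familiar chain-homotopy between multiplication by $a$ and multiplication by $a'$ (via multiplication by $c$) to an honest isomorphism of DG-rings rather than merely of underlying complexes. The key observation that makes this work is that $c$ lies in odd degree $-1$, so the substitution $e \mapsto e' + c$ preserves the squared-zero relation of the free strictly-commutative DG-algebra.
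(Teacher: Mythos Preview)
Your proof is correct. The reduction to $n=1$ via Proposition~\ref{prop:KosDGcon} is exactly the strategy of the paper (the paper phrases it as induction on $n$, but the content is identical: $K(a_1,\dots,a_n)\cong K(a_1,\dots,a_{n-1})\otimes^{\mrm{L}}_A K(a_n)$ and one varies one lift at a time).

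Where you differ is in the base case $n=1$. The paper does not write down an isomorphism; it simply cites \cite[Lemma 3.8]{Mi}, which produces a commutative DG-ring $T$ together with quasi-isomorphisms $K(a_1)\to T\leftarrow K(a_1')$, giving the comparison only as a zig-zag in $\ho(\cdg)$. Your argument instead exploits the observation that a witness $c\in A^{-1}$ with $dc=a-a'$ lies in odd degree, so $e\mapsto e'+c$ is an honest isomorphism of strictly commutative DG-$A$-algebras, not merely a homotopy equivalence. This is more elementary and more explicit than what the paper invokes, and it yields a slightly stronger conclusion (an isomorphism in $\cdg$ rather than only in $\ho(\cdg)$). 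The trade-off is that the zig-zag approach is the one that generalises more readily when one cannot arrange a strict map in the right direction, but for this particular statement your direct isomorphism is cleaner.
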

\begin{proof}
Suppose $a_1',\dots,a_n' \in A^0$ is another choice of elements which satisfy $\pi^0_A(a_i') = \bar{a}_i$ for all $1 \le i \le n$.
Let us temporary denote the Koszul DG-ring associated to $A$ and a lift $a_1,\dots,a_n$ by $K(a_1,\dots,a_n)$.
We will show the result by induction on $n$.
If $n=1$, then it is shown in \cite[Lemma 2.8]{Mi} that there is a commutative DG-ring $T$,
and quasi-isomorphisms $K(a_1) \to T$ and $K(a_1') \to T$,
which gives the required isomorphism $K(a_1) \cong K(a_1')$ in $\ho(\cdg)$.
Suppose that the claim is true for all Koszul DG-rings with respect to sequences of length less than $n$.
Then by Proposition \ref{prop:KosDGcon},
we have a sequence of isomorphisms in $\ho(\cdg)$:
\begin{gather*}
K(a_1,\dots,a_n) \cong K(a_1,\dots,a_{n-1}) \otimes^{\mrm{L}}_A K(a_n) \cong\\
K(a_1',\dots,a_{n-1}') \otimes^{\mrm{L}}_A K(a_n') \cong K(a_1',\dots,a_n').
\end{gather*}
\end{proof}

\begin{rem}
Given a commutative DG-ring $A$, and $\bar{a}_1,\dots,\bar{a}_n \in \mrm{H}^0(A)$,
the natural surjection $\mathbb{Z}[x_1,\dots,x_n] \surj \mathbb{Z}$ induces a map of DG-rings
\[
A =  A\otimes^{\mrm{L}}_{\mathbb{Z}[x_1,\dots,x_n]} \mathbb{Z}[x_1,\dots,x_n] \to  A\otimes^{\mrm{L}}_{\mathbb{Z}[x_1,\dots,x_n]} \mathbb{Z} = K(A;\bar{a}_1,\dots,\bar{a}_n).
\]
We denote this map by $\kappa:A \to K(A;\bar{a}_1,\dots,\bar{a}_n)$,
and remark that these maps commute with the isomorphisms in Proposition \ref{prop:kosInd}.
\end{rem}

\begin{prop}
Up to isomorphism in $\cat{D}(K(A;\bar{a}_1,\dots,\bar{a}_n)$,
Definition \ref{def:KosDG}(2) is independent of the chosen $a_1,\dots,a_n$.
\end{prop}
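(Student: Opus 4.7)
The plan is to reduce the statement to the analogous result for the Koszul DG-ring, Proposition~\ref{prop:kosInd}, via a base-change formula for Koszul DG-modules. Once this base change is in place, transporting the DG-ring isomorphism along $M\otimes^{\mrm{L}}_A(-)$ will do the job.

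First I would establish the following natural base-change formula: for any $M\in\cat{D}(A)$ and any choice of lifts $a_1,\dots,a_n\in A^0$, there is an isomorphism
\[
K(M;\bar{a}_1,\dots,\bar{a}_n) \;\cong\; M\otimes^{\mrm{L}}_A K(A;\bar{a}_1,\dots,\bar{a}_n)
\]
in $\cat{D}(K(A;\bar{a}_1,\dots,\bar{a}_n))$. Setting $P:=\mathbb{Z}[x_1,\dots,x_n]$ and using that the DG-algebra structure of $M$ over $P$ factors through $A$, associativity of the derived tensor product yields
\[
M\otimes^{\mrm{L}}_{P}\mathbb{Z} \;\cong\; M\otimes^{\mrm{L}}_A \bigl(A\otimes^{\mrm{L}}_{P}\mathbb{Z}\bigr),
\]
and the right-hand side is exactly $M\otimes^{\mrm{L}}_A K(A;\bar{a}_1,\dots,\bar{a}_n)$. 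The $K(A;\bar{a}_1,\dots,\bar{a}_n)$-module structure on both sides is induced from the second tensor factor on the right.

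Given the base-change formula, the independence from the lift is a transport argument. Fix two choices of lifts $a_1,\dots,a_n$ and $a'_1,\dots,a'_n$ in $A^0$, and write $K$ and $K'$ for the corresponding Koszul DG-rings. By Proposition~\ref{prop:kosInd} there is an isomorphism $\phi:K\cong K'$ in $\ho(\cdg)$ under $A$, realised by a zigzag of quasi-isomorphisms of commutative DG-rings under $A$. Since the derived tensor product sends quasi-isomorphisms to quasi-isomorphisms, applying $M\otimes^{\mrm{L}}_A(-)$ to this zigzag produces a zigzag of quasi-isomorphisms of DG-modules; combined with the base-change formula, this gives
\[
K(M;\bar{a}_1,\dots,\bar{a}_n) \;\cong\; M\otimes^{\mrm{L}}_A K \;\cong\; M\otimes^{\mrm{L}}_A K' \;\cong\; K(M;\bar{a}'_1,\dots,\bar{a}'_n).
\]

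The only subtle point, and the step most demanding of care, is to check that the resulting isomorphism can be interpreted inside a single derived category $\cat{D}(K(A;\bar{a}_1,\dots,\bar{a}_n))$. Each quasi-isomorphism of DG-rings in the zigzag of Proposition~\ref{prop:kosInd} is $A$-linear, so it induces a triangle-equivalence between the associated derived categories of DG-modules, and the compatibility of these equivalences with the derived tensor product over $A$ means that the two Koszul DG-modules correspond under the composite equivalence. Beyond this bookkeeping I do not anticipate any serious obstacle; everything rests on Proposition~\ref{prop:kosInd} together with associativity and quasi-isomorphism invariance of $\otimes^{\mrm{L}}$.
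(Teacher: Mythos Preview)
Your proposal is correct and follows essentially the same approach as the paper: the paper's proof consists of exactly the base-change isomorphism $K(M;\bar{a}_1,\dots,\bar{a}_n)\cong K(A;\bar{a}_1,\dots,\bar{a}_n)\otimes^{\mrm{L}}_A M$ together with an appeal to Proposition~\ref{prop:kosInd}. You have simply spelled out the justification for the base-change formula and the bookkeeping about transporting along the zigzag of DG-ring quasi-isomorphisms, details which the paper leaves implicit.
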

\begin{proof}
This follows from Proposition \ref{prop:kosInd} and the isomorphism
\[
K(M;\bar{a}_1,\dots,\bar{a}_n) \cong K(A;\bar{a}_1,\dots,\bar{a}_n) \otimes^{\mrm{L}}_A M.
\]
\end{proof}

\begin{prop}\label{prop:KoszulChange}
Let $f:A \to B$ be a map of commutative DG-rings,
let $\bar{a}_1,\dots,\bar{a}_n \in \mrm{H}^0(A)$,
and for any $1 \le i \le n$,
let $\bar{b}_i = \mrm{H}^0(f)(\bar{a}_i)$.
Then there is an isomorphism
\[
K(A;\bar{a}_1,\dots,\bar{a}_n) \otimes^{\mrm{L}}_A B \cong
K(B;\bar{b}_1,\dots,\bar{b}_n)
\]
in $\ho(\cdg)$.
\end{prop}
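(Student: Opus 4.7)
The plan is to reduce the statement to associativity of the derived tensor product, once compatible lifts of the elements $\bar{a}_i$ and $\bar{b}_i$ have been chosen.

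First, I would choose the lifts carefully. Pick any lifts $a_1,\dots,a_n \in A^0$ of $\bar{a}_1,\dots,\bar{a}_n$, and define $b_i := f^0(a_i) \in B^0$. Since $\pi^0_B \circ f^0 = \mrm{H}^0(f) \circ \pi^0_A$, we have $\pi^0_B(b_i) = \mrm{H}^0(f)(\bar{a}_i) = \bar{b}_i$, so the $b_i$ are lifts of the $\bar{b}_i$. Give $A$ the structure of a $\mathbb{Z}[x_1,\dots,x_n]$-algebra via $x_i \mapsto a_i$, and give $B$ the structure of a $\mathbb{Z}[x_1,\dots,x_n]$-algebra via $x_i \mapsto b_i$; with these choices, the map $f:A \to B$ is a morphism of $\mathbb{Z}[x_1,\dots,x_n]$-algebras.

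Next, I would invoke associativity of the derived tensor product in $\ho(\cdg)$ to compute
\[
K(A;\bar{a}_1,\dots,\bar{a}_n) \otimes^{\mrm{L}}_A B \;=\; \bigl(A \otimes^{\mrm{L}}_{\mathbb{Z}[x_1,\dots,x_n]} \mathbb{Z}\bigr) \otimes^{\mrm{L}}_A B \;\cong\; \bigl(B \otimes^{\mrm{L}}_A A\bigr) \otimes^{\mrm{L}}_{\mathbb{Z}[x_1,\dots,x_n]} \mathbb{Z} \;\cong\; B \otimes^{\mrm{L}}_{\mathbb{Z}[x_1,\dots,x_n]} \mathbb{Z},
\]
which by definition is $K(B;\bar{b}_1,\dots,\bar{b}_n)$. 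Each isomorphism here takes place in $\ho(\cdg)$ because all tensor products are of commutative DG-algebras over $\mathbb{Z}[x_1,\dots,x_n]$, and the unit isomorphism $B \otimes^{\mrm{L}}_A A \cong B$ is canonical.

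Finally, by Proposition \ref{prop:kosInd}, the Koszul DG-rings $K(A;\bar{a}_1,\dots,\bar{a}_n)$ and $K(B;\bar{b}_1,\dots,\bar{b}_n)$ are independent (in $\ho(\cdg)$) of the particular choice of lifts, so the displayed isomorphism holds regardless of how the $a_i$ and $b_i$ are initially selected. I do not anticipate a genuine obstacle here; the only subtle point is insisting that $f$ be a map of $\mathbb{Z}[x_1,\dots,x_n]$-algebras so that associativity applies, and this is precisely what the choice $b_i = f^0(a_i)$ arranges.
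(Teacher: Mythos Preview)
Your proposal is correct and follows essentially the same approach as the paper: choose lifts $a_i \in A^0$, set $b_i = f(a_i)$ so that $f$ becomes $\mathbb{Z}[x_1,\dots,x_n]$-linear, and then use associativity of the derived tensor product to collapse $(A \otimes^{\mrm{L}}_{\mathbb{Z}[x_1,\dots,x_n]} \mathbb{Z}) \otimes^{\mrm{L}}_A B$ to $B \otimes^{\mrm{L}}_{\mathbb{Z}[x_1,\dots,x_n]} \mathbb{Z}$. You are in fact slightly more explicit than the paper in verifying that the $b_i$ lift the $\bar{b}_i$ and in invoking Proposition~\ref{prop:kosInd} for independence of lifts.
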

\begin{proof}
Let $a_1,\dots,a_n \in A^0$ be such that $\pi^0_A(a_i) = \bar{a}_i$,
and let $b_i = f(a_i)$ for all $1 \le i \le n$.
Making $A,B$ to DG-algebras over $\mathbb{Z}[x_1,\dots,x_n]$ DG-algebra by setting $x_i \mapsto a_i$ and $x_i \mapsto b_i$,
we see that the map $f:A \to B$ is $\mathbb{Z}[x_1,\dots,x_n]$-linear, 
so we get using these $\mathbb{Z}[x_1,\dots,x_n]$-structures:
\begin{gather*}
K(A;\bar{a}_1,\dots,\bar{a}_n) \otimes^{\mrm{L}}_A B \cong
(A\otimes^{\mrm{L}}_{\mathbb{Z}[x_1,\dots,x_n]} \mathbb{Z}) \otimes^{\mrm{L}}_A B \cong\\
B\otimes^{\mrm{L}}_{\mathbb{Z}[x_1,\dots,x_n]} \mathbb{Z} \cong
K(B;\bar{b}_1,\dots,\bar{b}_n).
\end{gather*}
\end{proof}

\begin{prop}\label{prop:compact}
Given a commutative DG-ring $A$, and $\bar{a}_1,\dots,\bar{a}_n \in \mrm{H}^0(A)$,
considered as an object of $\cat{D}(A)$,
the DG-module $K(A;\bar{a}_1,\dots,\bar{a}_n)$ is a compact object.
\end{prop}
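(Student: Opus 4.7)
The plan is to exhibit $K(A;\bar{a}_1,\dots,\bar{a}_n)$ concretely as a finite semi-free DG-module over $A$ and then invoke the general fact that such DG-modules are compact in $\cat{D}(A)$.

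First, recall the key observation (used implicitly in the introduction) that over the polynomial ring $\mathbb{Z}[x_1,\dots,x_n]$, the Koszul complex $K(\mathbb{Z}[x_1,\dots,x_n];x_1,\dots,x_n)$ is a bounded complex of finitely generated free modules providing a free resolution of $\mathbb{Z}$. Hence it is a K-flat resolution, and the derived tensor product defining $K(A;\bar{a}_1,\dots,\bar{a}_n)$ can be computed by the ordinary tensor product:
\[
K(A;\bar{a}_1,\dots,\bar{a}_n) \;\cong\; A \otimes_{\mathbb{Z}[x_1,\dots,x_n]} K(\mathbb{Z}[x_1,\dots,x_n];x_1,\dots,x_n).
\]
This is precisely the base change isomorphism noted in (\ref{eqn:base-change}), now applied over the polynomial ring.

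As a graded $A$-module, the right-hand side is the exterior algebra on $n$ generators placed in degree $-1$, scalar-extended to $A$, so it is a finite direct sum of shifts of $A$ (of total rank $2^n$). In other words, $K(A;\bar{a}_1,\dots,\bar{a}_n)$ is a semi-free DG-module over $A$ with a finite semi-basis.

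Step three is then to conclude by the standard fact that any finite semi-free DG-module over a DG-ring is compact in its derived category. This is essentially formal: $A$ is trivially compact in $\cat{D}(A)$; compact objects are closed under shifts and finite direct sums; and a finite semi-free DG-module admits a finite filtration whose successive quotients are shifts of $A$, so that it can be built up by finitely many distinguished triangles from shifts of $A$, inside the thick subcategory of compact objects. There is no real obstacle here; the proposition is essentially a packaging of the concrete semi-free description of the Koszul complex, and the argument requires no new technical input beyond the base-change identity.
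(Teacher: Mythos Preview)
Your proof is correct, but it takes a different route from the paper's own argument. You directly exhibit $K(A;\bar{a}_1,\dots,\bar{a}_n)$ as a finite semi-free DG-$A$-module by computing the derived tensor product using the explicit Koszul resolution over $\mathbb{Z}[x_1,\dots,x_n]$, and then invoke the elementary fact that finite semi-free DG-modules lie in the thick subcategory generated by $A$ and are therefore compact. The paper instead reduces to $\mrm{H}^0(A)$: it applies the base-change Proposition~\ref{prop:KoszulChange} along $\pi_A:A\to\mrm{H}^0(A)$ to identify $K(A;\bar{a}_1,\dots,\bar{a}_n)\otimes^{\mrm{L}}_A \mrm{H}^0(A)$ with the ordinary Koszul complex $K(\mrm{H}^0(A);\bar{a}_1,\dots,\bar{a}_n)$, notes that the latter is compact in $\cat{D}(\mrm{H}^0(A))$, and then appeals to \cite[Theorems 5.11 and 5.20]{Ye1} to transfer compactness from $\cat{D}(\mrm{H}^0(A))$ back to $\cat{D}(A)$. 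Your argument is more self-contained and avoids the external citation; the paper's argument fits its running philosophy of detecting properties of DG-rings through $\mrm{H}^0$, and has the small advantage of not requiring any explicit model of the Koszul DG-ring (it works with any representative of the derived tensor product in $\ho(\cdg)$).
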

\begin{proof}
Applying Proposition \ref{prop:KoszulChange} to the map $\pi_A:A \to \mrm{H}^0(A)$,
we have an isomorphism
\[
K(A;\bar{a}_1,\dots,\bar{a}_n) \otimes^{\mrm{L}}_A \mrm{H}^0(A) \cong
K(\mrm{H}^0(A);\bar{a}_1,\dots,\bar{a}_n).
\]
Since $K(\mrm{H}^0(A);\bar{a}_1,\dots,\bar{a}_n)$,
being a bounded complex of finitely generated free $\mrm{H}^0(A)$-modules,
is a compact object of $\cat{D}(\mrm{H}^0(A))$,
it follows form \cite[Theorem 5.11]{Ye1} and 
\cite[Theorem 5.20]{Ye1}
that $K(A;\bar{a}_1,\dots,\bar{a}_n)$ is a compact object of $\cat{D}(A)$.
\end{proof}

The Koszul complex commutes with localization:

\begin{prop}\label{prop:kosLocal}
Let $A$ be a commutative DG-ring, and let $\bar{a}_1,\dots,\bar{a}_n \in \mrm{H}^0(A)$.
Consider the canonical surjection $\tau:\mrm{H}^0(A) \to \mrm{H}^0(A)/(\bar{a}_1,\dots,\bar{a}_n)$.
Given 
\[
\bar{\p} \in \opn{Spec}(\mrm{H}^0(A)/(\bar{a}_1,\dots,\bar{a}_n)),
\]
letting $\bar{\q} = \tau^{-1}(\bar{\p})$,
there is an isomorphism of DG-rings
\[
K(A;\bar{a}_1,\dots,\bar{a}_n)_{\bar{\p}} \cong K(A_{\bar{\q}};\bar{a}_1/1,\dots,\bar{a}_n/1).
\]
\end{prop}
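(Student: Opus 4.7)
The plan is to identify the localization $K(A;\bar{a}_1,\dots,\bar{a}_n)_{\bar{\p}}$, computed in the sense of Section \ref{sec:pre}, with the derived base change of the Koszul DG-ring along the localization map $A \to A_{\bar{\q}}$, and then to apply Proposition \ref{prop:KoszulChange} to this map.

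First I would pin down a concrete representative of $K := K(A;\bar{a}_1,\dots,\bar{a}_n)$. Choose lifts $a_1,\dots,a_n \in A^0$ of $\bar{a}_1,\dots,\bar{a}_n$. Using the ordinary Koszul resolution of $\mathbb{Z}$ over $\mathbb{Z}[x_1,\dots,x_n]$, the DG-ring $K$ is represented by $A \otimes_{\mathbb{Z}[x_1,\dots,x_n]} K(\mathbb{Z}[x_1,\dots,x_n];x_1,\dots,x_n)$. Inspecting degree $0$, one obtains $K^0 = A^0$, and the map $\pi^0_K : K^0 \to \mrm{H}^0(K) = \mrm{H}^0(A)/(\bar{a}_1,\dots,\bar{a}_n)$ factors as $A^0 \xrightarrow{\pi^0_A} \mrm{H}^0(A) \xrightarrow{\tau} \mrm{H}^0(K)$.

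Now I would unwind the localization. By definition, $K_{\bar{\p}} = K \otimes_{K^0} K^0_{\p'}$ where $\p' = (\pi^0_K)^{-1}(\bar{\p}) \in \opn{Spec}(K^0)$. By the factorization above, $\p' = (\pi^0_A)^{-1}(\tau^{-1}(\bar{\p})) = (\pi^0_A)^{-1}(\bar{\q}) = \q$, which is precisely the prime of $A^0$ used to form $A_{\bar{\q}}$. Hence
\[
K_{\bar{\p}} = K \otimes_{A^0} A^0_{\q} = K \otimes_A (A \otimes_{A^0} A^0_{\q}) = K \otimes_A A_{\bar{\q}},
\]
and since $A_{\bar{\q}}$ is a flat $A$-module (localization is flat at the level of underlying graded modules), this ordinary tensor product coincides with the derived tensor product $K \otimes^{\mrm{L}}_A A_{\bar{\q}}$.

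Finally, applying Proposition \ref{prop:KoszulChange} to the localization map $A \to A_{\bar{\q}}$, which sends $\bar{a}_i \mapsto \bar{a}_i/1$, I get
\[
K(A;\bar{a}_1,\dots,\bar{a}_n) \otimes^{\mrm{L}}_A A_{\bar{\q}} \cong K(A_{\bar{\q}};\bar{a}_1/1,\dots,\bar{a}_n/1)
\]
in $\ho(\cdg)$, combining with the display above to give the desired isomorphism. The only minor subtlety, and really the one point that requires care, is the identification of $\p'$ with the preimage of $\bar{\q}$ in $A^0$; this is what makes everything collapse to a single base change, and it rests on the observation that $K^0 = A^0$ for the chosen concrete model. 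Independence of the model then follows from Proposition \ref{prop:kosInd}.
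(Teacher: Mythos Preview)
Your proof is correct and follows essentially the same approach as the paper: both identify $K^0 = A^0$ for the explicit Koszul model, verify that the prime in $K^0$ corresponding to $\bar{\p}$ coincides with $(\pi^0_A)^{-1}(\bar{\q})$, and then use a base-change computation. The only cosmetic difference is that in the last step the paper directly rearranges the derived tensor product $(A\otimes^{\mrm{L}}_{\mathbb{Z}[x_1,\dots,x_n]}\mathbb{Z})\otimes^{\mrm{L}}_{A^0}A^0_{\q}$ via associativity, whereas you package that same computation by invoking Proposition~\ref{prop:KoszulChange} for the map $A\to A_{\bar{\q}}$.
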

\begin{proof}
Since the degree zero part of the Koszul complex satisfies
\[
\left(K(\mathbb{Z}[x_1,\dots,x_n];x_1,\dots,x_n])\right)^0 = \mathbb{Z}[x_1,\dots,x_n],
\]
it follows that
\[
\left(A\otimes^{\mrm{L}}_{\mathbb{Z}[x_1,\dots,x_n]} \mathbb{Z}\right)^0 = A^0.
\]
Hence, there is an equality
\[
\left(\pi^0_{A\otimes^{\mrm{L}}_{\mathbb{Z}[x_1,\dots,x_n]} \mathbb{Z}}\right)^{-1}(\bar{\p}) = (\pi^0_A)^{-1}(\bar{\q}).
\]
Let us denote this prime ideal of $A^0$ by $\p$.
By the definition of localization we obtain the following sequence of isomorphisms of DG-rings:
\begin{gather*}
K(A;\bar{a}_1,\dots,\bar{a}_n)_{\bar{\p}} = K(A;\bar{a}_1,\dots,\bar{a}_n) \otimes^{\mrm{L}}_{A^0} A^0_{\p} \cong\\
\left(A\otimes^{\mrm{L}}_{\mathbb{Z}[x_1,\dots,x_n]} \mathbb{Z}\right) \otimes^{\mrm{L}}_{A^0} A^0_{\p} \cong\\
A_{\bar{\q}}\otimes^{\mrm{L}}_{\mathbb{Z}[x_1,\dots,x_n]} \mathbb{Z} = K(A_{\bar{\q}};\bar{a}_1/1,\dots,\bar{a}_n/1).
\end{gather*}
\end{proof}

\begin{prop}\label{prop:dualKosz}
Let $A$ be a commutative DG-ring, and let $\bar{a}_1,\dots,\bar{a}_n \in \mrm{H}^0(A)$.
Then there is an isomorphism
\[
\mrm{R}\opn{Hom}_A(K(A;\bar{a}_1,\dots,\bar{a}_n),A) \cong K(A;\bar{a}_1,\dots,\bar{a}_n)[-n]
\]
in $\cat{D}(K(A;\bar{a}_1,\dots,\bar{a}_n))$.
\end{prop}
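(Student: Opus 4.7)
The plan is to reduce to the classical self-duality of the Koszul complex (\ref{eqn:dualn}) via derived Hom-tensor adjunction, exactly in the spirit of the definition of $K(A;\bar{a}_1,\dots,\bar{a}_n)$ as a derived base change along $\mathbb{Z}[x_1,\dots,x_n] \to \mathbb{Z}$.

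Fix lifts $a_1,\dots,a_n \in A^0$ of $\bar{a}_1,\dots,\bar{a}_n$, set $P := \mathbb{Z}[x_1,\dots,x_n]$, and regard $A$ as a DG-algebra over $P$ by $x_i \mapsto a_i$. Write $K_P := K(P;x_1,\dots,x_n)$, which is a semi-free commutative DG-algebra resolution of $\mathbb{Z}$ over $P$ and, as a complex of $P$-modules, a bounded complex of finitely generated free $P$-modules. By Definition \ref{def:KosDG} together with Proposition \ref{prop:KoszulChange} (or directly by (\ref{eqn:base-change})), we have
\[
K(A;\bar{a}_1,\dots,\bar{a}_n) \;\cong\; A \otimes^{\mrm{L}}_P \mathbb{Z} \;\cong\; A \otimes_P K_P .
\]

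The main computation proceeds by derived Hom-tensor adjunction: writing $K := K(A;\bar{a}_1,\dots,\bar{a}_n)$, one has
\[
\mrm{R}\opn{Hom}_A(K, A) \;\cong\; \mrm{R}\opn{Hom}_A(A \otimes^{\mrm{L}}_P \mathbb{Z},\, A) \;\cong\; \mrm{R}\opn{Hom}_P(\mathbb{Z}, A) \;\cong\; \opn{Hom}_P(K_P, A),
\]
the last step using that $K_P$ is K-projective over $P$. Because $K_P$ is a bounded complex of finitely generated free $P$-modules, the canonical evaluation map
\[
\opn{Hom}_P(K_P, P) \otimes_P A \;\iso\; \opn{Hom}_P(K_P, A)
\]
is an isomorphism of DG-modules over $A$. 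Applying the classical self-duality of the Koszul complex (\ref{eqn:dualn}) in the form $\opn{Hom}_P(K_P, P) \cong K_P[-n]$, we conclude
\[
\mrm{R}\opn{Hom}_A(K, A) \;\cong\; K_P[-n] \otimes_P A \;\cong\; (K_P \otimes_P A)[-n] \;\cong\; K[-n].
\]

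The one thing requiring care is that the isomorphism should be taken in $\cat{D}(K)$ rather than only in $\cat{D}(A)$, since both sides carry a $K$-action; this is the expected main obstacle. On the left, the $K$-action on $\mrm{R}\opn{Hom}_A(K,A)$ comes from the source argument, and on the right, $K[-n]$ acts on itself. One can verify $K$-linearity of each isomorphism in the chain directly: the adjunction isomorphism is $K$-linear because $K = A \otimes^{\mrm{L}}_P \mathbb{Z}$ acts on $\mrm{R}\opn{Hom}_P(\mathbb{Z},A)$ through its source factor $\mathbb{Z}$; the evaluation isomorphism $\opn{Hom}_P(K_P,P)\otimes_P A \iso \opn{Hom}_P(K_P,A)$ is $K_P$-linear where $K_P$ acts on $\opn{Hom}_P(K_P,P)$ via the source, and this matches the action of $K = K_P \otimes_P A$; finally, the classical self-duality $\opn{Hom}_P(K_P,P) \cong K_P[-n]$ is an isomorphism of DG-modules over the DG-algebra $K_P$, so it remains so after tensoring with $A$ over $P$.
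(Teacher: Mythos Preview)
Your argument is correct and follows essentially the same route as the paper: both reduce via Hom--tensor adjunction to $\opn{Hom}_P(K_P,A)$, then pull out $A$ using that $K_P$ is a bounded complex of finitely generated free $P$-modules, and finally invoke the classical self-duality (\ref{eqn:dualn}). Your extra paragraph verifying $K$-linearity of each step is a welcome addition, as the paper simply asserts the chain holds in $\cat{D}(K)$ without comment.
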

\begin{proof}
Let $K = K(\mathbb{Z}[x_1,\dots,x_n];x_1,\dots,x_n)$.
Note that $K \cong \mathbb{Z}$, and that $K$ is a bounded complex of finitely generated free $\mathbb{Z}[x_1,\dots,x_n]$-modules.
Then $K(A;\bar{a}_1,\dots,\bar{a}_n) = K\otimes_{\mathbb{Z}[x_1,\dots,x_n]} A$ is K-projective over $A$,
and we have the following sequence of isomorphisms in $\cat{D}(K(A;\bar{a}_1,\dots,\bar{a}_n))$:
\begin{gather*}
\mrm{R}\opn{Hom}_A(K(A;\bar{a}_1,\dots,\bar{a}_n),A) =\\
\opn{Hom}_A(K\otimes_{\mathbb{Z}[x_1,\dots,x_n]} A,A) \cong \\
\opn{Hom}_{\mathbb{Z}[x_1,\dots,x_n]}(K,A) \cong\\
\opn{Hom}_{\mathbb{Z}[x_1,\dots,x_n]}(K,\mathbb{Z}[x_1,\dots,x_n]) \otimes_{\mathbb{Z}[x_1,\dots,x_n]} A \cong^{(\diamond)}\\
K[-n] \otimes_{\mathbb{Z}[x_1,\dots,x_n]} A = K(A;\bar{a}_1,\dots,\bar{a}_n)[-n].
\end{gather*}
where the isomorphism $(\diamond)$ is by (\ref{eqn:dualn}).
\end{proof}

The last result of this section shows that Koszul complexes of a noetherian local DG-ring commute with derived adic completion of DG-rings.
Before we prove this result, we need the following lemma.

\begin{lem}\label{lem:injSurj}
Let $f:(A,\bar{\m})\to (B,\bar{\n})$ be a map of DG-rings between commutative noetherian local DG-rings,
such that $\mrm{H}^0(f):\mrm{H}^0(A) \to \mrm{H}^0(B)$ is a surjective local homomorphism.
Then there is an isomorphism
\[
\mrm{R}\opn{Hom}_A(B,E(A,\bar{\m})) \cong E(B,\bar{\n})
\]
in $\cat{D}(B)$.
\end{lem}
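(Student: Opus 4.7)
The plan is to identify $\mrm{R}\opn{Hom}_A(B, E(A, \bar{\m}))$ via the DG Matlis classification from \cite{MiInj, ShInj} by verifying that it satisfies the characterizing properties of $E(B, \bar{\n})$. The main technical tool is the restriction--coinduction adjunction
\[
\mrm{R}\opn{Hom}_B\bigl(N, \mrm{R}\opn{Hom}_A(B, E(A, \bar{\m}))\bigr) \;\cong\; \mrm{R}\opn{Hom}_A\bigl(N, E(A, \bar{\m})\bigr)
\]
for $N \in \cat{D}(B)$, where on the right $N$ is implicitly restricted to $\cat{D}(A)$ along $f$.

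First I would show that $\mrm{R}\opn{Hom}_A(B, E(A, \bar{\m})) \in \opn{Inj}(B)$: the coinduction functor $\mrm{R}\opn{Hom}_A(B, -)$ is right adjoint to the exact restriction of scalars functor, hence preserves the injectivity property defining $\opn{Inj}(-)$. Next I would pin down the support. Since $\mrm{H}^0(f)$ is a surjective local homomorphism, the induced map $\opn{Spec}(\mrm{H}^0(f))$ is a closed immersion sending $\bar{\n}$ to $\bar{\m}$, and $\bar{\n}$ is the unique prime of $\mrm{H}^0(B)$ lying over $\bar{\m}$. Together with $\opn{Supp}_A(E(A, \bar{\m})) \subseteq \{\bar{\m}\}$, this yields $\opn{Supp}_B(\mrm{R}\opn{Hom}_A(B, E(A, \bar{\m}))) \subseteq \{\bar{\n}\}$.

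By the DG Matlis classification, any object of $\opn{Inj}(B)$ whose support is contained in $\{\bar{\n}\}$ is, up to isomorphism, determined by a shift and a multiplicity of $E(B, \bar{\n})$. To pin these down I would compute the \emph{socle}, i.e.\ apply $\mrm{R}\opn{Hom}_B(\mrm{H}^0(B)/\bar{\n}, -)$. Using the displayed adjunction and the identification $\mrm{H}^0(B)/\bar{\n} \cong \mrm{H}^0(A)/\bar{\m}$ (which follows because $\mrm{H}^0(f)$ is a surjective local homomorphism of local rings), this reduces to $\mrm{R}\opn{Hom}_A(\mrm{H}^0(A)/\bar{\m}, E(A, \bar{\m}))$, which by the defining socle property of $E(A, \bar{\m})$ in the DG Matlis theory is a single shifted copy of the common residue field. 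Performing the analogous computation on $E(B, \bar{\n})$ produces the same output, so both sides have identical supports and identical socles; the DG Matlis classification then forces the desired isomorphism.

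The main obstacle is the last step: even granted injectivity and the support computation, the identification is clean only once one knows that in the DG Matlis theory the object $E(A, \bar{\p})$ is determined up to isomorphism by its support together with the value of $\mrm{R}\opn{Hom}_A(\mrm{H}^0(A)/\bar{\p}, -)$. In the classical commutative case this is the elementary fact that an injective module with simple socle equal to the residue field is its injective hull, but in the DG context one must carefully track shifts coming from the amplitudes of $A$ and $B$ so that both sides match on the nose rather than merely up to a shift. Once this characterization is extracted from \cite{MiInj, ShInj}, the lemma follows from the routine adjunction computations sketched above.
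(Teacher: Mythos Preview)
Your approach is correct in outline and shares the first step with the paper: both begin by observing that $J := \mrm{R}\opn{Hom}_A(B,E(A,\bar{\m}))$ lies in $\opn{Inj}(B)$ (this is \cite[Proposition~5.6]{ShInj}). The divergence is in how the two proofs then identify $J$ among the DG injectives. You propose to compute the support and the socle $\mrm{R}\opn{Hom}_B(\mrm{H}^0(B)/\bar{\n},J)$, reducing via adjunction to a socle computation on the $A$-side. The paper instead invokes the classification theorem \cite[Theorem~5.7]{ShInj}, which says that an object of $\opn{Inj}(B)$ is determined up to isomorphism by its $\mrm{H}^0$ as an $\mrm{H}^0(B)$-module. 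It then computes $\mrm{H}^0(J)$ directly using \cite[Theorem~4.10]{ShInj}, which gives the formula $\mrm{H}^0(J) \cong \opn{Hom}_{\mrm{H}^0(A)}(\mrm{H}^0(B),\mrm{H}^0(E(A,\bar{\m})))$; the right-hand side is, by classical Matlis theory for the surjection of local rings $\mrm{H}^0(A) \to \mrm{H}^0(B)$, the injective hull of $\mrm{H}^0(B)/\bar{\n}$, i.e.\ $\mrm{H}^0(E(B,\bar{\n}))$.

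The paper's route is shorter and completely sidesteps the obstacle you yourself flag: because the invariant used is $\mrm{H}^0$ rather than the socle, there is no shift to track, and the normalization of $E(-,-)$ is built into the definition via $\mrm{H}^0$. Your socle argument can be made to work (ultimately the socle computation factors through the same $\mrm{H}^0$ identification, since $\mrm{R}\opn{Hom}_A(\mrm{H}^0(A),E(A,\bar{\m}))$ is an object of $\opn{Inj}(\mrm{H}^0(A))$ with $\mrm{H}^0$ equal to the classical injective hull), but it adds an unnecessary layer. The moral is that for DG injectives the correct classifying invariant available off the shelf in \cite{ShInj} is $\mrm{H}^0$, not the socle.
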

\begin{proof}
Let $J = \mrm{R}\opn{Hom}_A(B,E(A,\bar{\m}))$.
By \cite[Proposition 5.6]{ShInj},
we have that $J \in \opn{Inj}(B)$.
Hence, by \cite[Theorem 5.7]{ShInj},
it is enough to show that $\mrm{H}^0(J) \cong \mrm{H}^0(E(B,\bar{\n}))$.
By definition, $\mrm{H}^0(E(B,\bar{\n}))$ is the injective hull over $\mrm{H}^0(B)$ of the residue field $\mrm{H}^0(B)/\bar{\n}$.
On the other hand, by \cite[Theorem 4.10]{ShInj},
we have that
\[
\mrm{H}^0(J) = \mrm{H}^0\left(\mrm{R}\opn{Hom}_A(B,E(A,\bar{\m}))\right) \cong \opn{Hom}_{\mrm{H}^0(A)}\left(\mrm{H}^0(B),\mrm{H}^0(E(A,\bar{\m}))\right).
\]
According to \cite[tag 08Z2]{SP}, since $\mrm{H}^0(A) \to \mrm{H}^0(B)$ is a surjection of local rings,
we have that
\[
\opn{Hom}_{\mrm{H}^0(A)}\left(\mrm{H}^0(B),\mrm{H}^0(E(A,\bar{\m}))\right)
\]
is isomorphic to the injective hull over $\mrm{H}^0(B)$ of the residue field $\mrm{H}^0(B)/\bar{\n}$, as claimed.
\end{proof}

Before proving that derived completion commutes with the Koszul complex,
recall, following \cite[Theorem 7.11]{Mat},
that if $A \to B$ is a map of commutative rings,
and if $M,N$ are $A$-modules, 
there is a natural map
\begin{equation}\label{eqn:nat-end}
\opn{Hom}_A(M,N)\otimes_A B \to \opn{Hom}_B(M\otimes_A B,N\otimes_A B),
\end{equation}
given by $f\otimes b \mapsto b\cdot (f\otimes 1_B)$.
Further note that in case $M=N$,
so that both sides of (\ref{eqn:nat-end}) are rings,
then the above map is a ring homomorphism.
One may factor (\ref{eqn:nat-end}) as
\begin{equation}\label{eqn:factor-nat-end}
\opn{Hom}_A(M,N)\otimes_A B \to \opn{Hom}_A(M,N\otimes_A B) \to \opn{Hom}_B(M\otimes_A B,N\otimes_A B),
\end{equation}
where the first map is the tensor-evaluation morphism,
and the second map is the hom-tensor adjunction.
This entire discussion may be generalized to DG-rings as follows.
Assume that $A \to B$ is a map of commutative DG-rings,
and let $M \in \cat{D}(A)$.
Let $P \to M$ be a K-projective resolution,
and let $A \to \tilde{B} \cong B$ be a K-flat commutative DG-algebra resolution of $B$ over $A$
(such a resolution exists by \cite[Theorem 3.21]{YeSq}).
It follows that $P\otimes_A \tilde{B}$ is a K-projective resolution of $M\otimes^{\mrm{L}}_A \tilde{B}$ over $\tilde{B}$,
so we obtain natural maps
\begin{gather*}
\mrm{R}\opn{Hom}_A(M,M)\otimes^{\mrm{L}}_A B \cong
\opn{Hom}_A(P,P)\otimes_A \tilde{B} \to^{(\diamond)}\\
\opn{Hom}_A(P,P\otimes_A \tilde{B}) \cong
\opn{Hom}_{\tilde{B}}(M\otimes_A \tilde{B},N\otimes_A \tilde{B}) \cong\\
\mrm{R}\opn{Hom}_B(M\otimes^{\mrm{L}}_A B,M\otimes^{\mrm{L}}_A B).
\end{gather*}

As before, we see that this composition is a map in the homotopy category of DG-rings,
and that it is an isomorphism if and only if the tensor-evaluation morphism,
denoted above by $(\diamond)$, is an isomorphism.
See \cite[Section 12.9]{YeBook} for a detailed discussion of the tensor-evaluation morphism over DG-rings.

We are now ready to prove that the Koszul complex commutes with derived completion.
We remark that we need the following rather difficult proof,
because our noetherian assumption is only on the level of cohomology.
One can give a much simpler proof,
using the base change property of the Koszul complex,
if one assumes that the ring $A^0$ is noetherian,
but as our proof demonstrates, 
the statement remains true without this assumption.

\begin{thm}\label{thm:koszulCompletion}
Let $(A,\bar{\m})$ be a commutative noetherian local DG-ring,
and suppose that $\bar{a}_1,\dots,\bar{a}_n \in \bar{\m}$.
Denote by $\bar{\n}$ the image of $\bar{\m}$ in the quotient ring $\mrm{H}^0(A)/(\bar{a}_1,\dots,\bar{a}_n)$,
Denote the image of $\bar{a}_1,\dots,\bar{a}_n$ under the completion map $\mrm{H}^0(A) \to \Lambda_{\bar{\m}}(\mrm{H}^0(A))$
by $\widehat{\bar{a}}_1,\dots,\widehat{\bar{a}}_n$.
Then there is an isomorphism
\[
K( \mrm{L}\Lambda(A,\bar{\m}); \widehat{\bar{a}}_1,\dots,\widehat{\bar{a}}_n) \cong \mrm{L}\Lambda( K(A;\bar{a}_1,\dots,\bar{a}_n),\bar{\n})
\]
in $\ho(\cdg)$.
\end{thm}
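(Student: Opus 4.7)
The plan is to reduce the statement to showing $B \otimes^{\mrm{L}}_A \mrm{L}\Lambda(A,\bar{\m}) \cong \mrm{L}\Lambda(B,\bar{\n})$ in $\ho(\cdg)$, where $B = K(A;\bar{a}_1,\dots,\bar{a}_n)$, and then to establish this via the DG-Matlis description of derived completion in terms of endomorphism DG-algebras of injective hulls. Writing $\widehat{A} = \mrm{L}\Lambda(A,\bar{\m})$, Proposition \ref{prop:KoszulChange} applied to the completion map $A \to \widehat{A}$ identifies the left-hand side of the desired isomorphism with $B \otimes^{\mrm{L}}_A \widehat{A}$, so it suffices to produce a DG-ring isomorphism $B \otimes^{\mrm{L}}_A \widehat{A} \cong \mrm{L}\Lambda(B,\bar{\n})$.

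For this I would invoke the formula $\mrm{L}\Lambda(A,\bar{\m}) \cong \mrm{R}\opn{Hom}_A(E(A,\bar{\m}),E(A,\bar{\m}))$, realized as a DG-ring via the endomorphism DG-algebra structure on injectives coming from the DG-Matlis theory of \cite{ShInj}, and the analogous formula for $B$. Set $E_A = E(A,\bar{\m})$ and $E_B = E(B,\bar{\n})$. The canonical map $A \to B$ satisfies the hypothesis of Lemma \ref{lem:injSurj}, which yields $\mrm{R}\opn{Hom}_A(B,E_A) \cong E_B$. Since $B$ is compact over $A$ by Proposition \ref{prop:compact}, a tensor-evaluation argument (equivalently, using reflexivity $B \cong (B^\vee)^\vee$ provided by Proposition \ref{prop:dualKosz}) gives
\[
B \otimes^{\mrm{L}}_A \mrm{R}\opn{Hom}_A(E_A,E_A) \cong \mrm{R}\opn{Hom}_A(\mrm{R}\opn{Hom}_A(B,E_A),E_A) \cong \mrm{R}\opn{Hom}_A(E_B,E_A),
\]
and hom-tensor adjunction along $A \to B$ together with a second application of Lemma \ref{lem:injSurj} rewrites this as $\mrm{R}\opn{Hom}_B(E_B,\mrm{R}\opn{Hom}_A(B,E_A)) \cong \mrm{R}\opn{Hom}_B(E_B,E_B) \cong \mrm{L}\Lambda(B,\bar{\n})$, as required.

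The main obstacle is to promote this chain of isomorphisms from $\cat{D}(A)$ and $\cat{D}(B)$ to isomorphisms in $\ho(\cdg)$. The Matlis formulas are DG-ring isomorphisms because both sides carry an intrinsic associative multiplicative structure (completion as a ring, composition on the endomorphism DG-algebra), and Lemma \ref{lem:injSurj} is a natural isomorphism compatible with composition, so it induces DG-ring isomorphisms on the relevant endomorphism objects. The tensor-evaluation step is precisely the factorization (\ref{eqn:factor-nat-end}) set up in the preamble before the theorem: it is a ring homomorphism on the nose, and it becomes a DG-ring quasi-isomorphism exactly when the tensor-evaluation map $(\diamond)$ is a quasi-isomorphism, which is guaranteed by compactness of $B$. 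As the author stresses, the reason this injective-hull route is necessary, rather than a direct base-change argument, is that the noetherian assumption is only at the cohomological level, so derived completion cannot be realized as an honest tensor product and the classical Koszul base-change proof is unavailable.
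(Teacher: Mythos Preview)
Your approach is essentially the paper's: both reduce via Proposition~\ref{prop:KoszulChange} to showing $B \otimes^{\mrm{L}}_A \mrm{L}\Lambda(A,\bar{\m}) \cong \mrm{L}\Lambda(B,\bar{\n})$ in $\ho(\cdg)$, and both establish this by combining the Matlis description $\mrm{L}\Lambda(-) \cong \mrm{R}\opn{Hom}(E,E)$ from \cite[Theorem~7.22]{ShInj}, Lemma~\ref{lem:injSurj}, compactness of $B$ (Proposition~\ref{prop:compact}), and the Koszul self-duality of Proposition~\ref{prop:dualKosz}. The ingredients and the overall strategy coincide.

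One point in your write-up is imprecise and worth tightening. Your displayed chain passes through the intermediate object $\mrm{R}\opn{Hom}_A(E_B,E_A)$, which carries no natural multiplicative structure, so this chain is not literally ``precisely the factorization (\ref{eqn:factor-nat-end}).'' The paper organizes the same isomorphisms differently to keep a ring structure visible at every stage: it first uses compactness and Proposition~\ref{prop:dualKosz} to identify $E_B \cong B[-n]\otimes^{\mrm{L}}_A E_A$, so that
\[
\mrm{L}\Lambda(B,\bar{\n}) \cong \mrm{R}\opn{Hom}_B\bigl(B\otimes^{\mrm{L}}_A E_A,\;B\otimes^{\mrm{L}}_A E_A\bigr),
\]
and then applies the preamble map
\[
\mrm{R}\opn{Hom}_A(E_A,E_A)\otimes^{\mrm{L}}_A B \longrightarrow \mrm{R}\opn{Hom}_B\bigl(B\otimes^{\mrm{L}}_A E_A,\;B\otimes^{\mrm{L}}_A E_A\bigr)
\]
directly between two honest DG-rings; this is a ring homomorphism by construction, and it is a quasi-isomorphism exactly when the tensor-evaluation map (\ref{eqn:neededTensorEval}) is, which is handled by Proposition~\ref{prop:Tensor-eval-with-compact}. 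Your composite can be identified with this map, but as written the promotion to $\ho(\cdg)$ is asserted rather than exhibited; routing through $\mrm{R}\opn{Hom}_B(B\otimes^{\mrm{L}}_A E_A,B\otimes^{\mrm{L}}_A E_A)$ rather than $\mrm{R}\opn{Hom}_A(E_B,E_A)$ fixes this cleanly.
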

\begin{proof}
Let us set $B:= K(A;\bar{a}_1,\dots,\bar{a}_n)$.
Since $\mrm{H}^{0}(\kappa):\mrm{H}^0(A) \to \mrm{H}^0(B)$ is surjective, 
it follows from Lemma \ref{lem:injSurj} that
\[
\mrm{R}\opn{Hom}_A(B,E(A,\bar{\m})) \cong E(B,\bar{\n}).
\]
According to \cite[Theorem 7.22]{ShInj},
there is an isomorphism
\[
\mrm{R}\opn{Hom}_B(E(B,\bar{\n}),E(B,\bar{\n})) \cong \mrm{L}\Lambda(B,\bar{\n})
\]
in $\ho(\cdg)$.
By Proposition \ref{prop:compact},
considered as an object of $\cat{D}(A)$,
we know that $B$ is compact.
Hence, by \cite[Theorem 14.1.22]{YeBook}, the map
\[
\mrm{R}\opn{Hom}_A(B,A) \otimes^{\mrm{L}}_A E(A,\bar{\m}) \to \mrm{R}\opn{Hom}_A(B,E(A,\bar{\m}))
\]
is an isomorphism in $\cat{D}(B)$.
By Proposition \ref{prop:dualKosz},
we know that $\mrm{R}\opn{Hom}_A(B,A) \cong B[-n]$.
Those isomorphisms imply that
\begin{gather*}
\mrm{L}\Lambda(B,\bar{\n}) \cong \mrm{R}\opn{Hom}_B(B[-n]\otimes^{\mrm{L}}_A E(A,\bar{\m}),B[-n]\otimes^{\mrm{L}}_A E(A,\bar{\m})) \cong\\
\mrm{R}\opn{Hom}_B(B\otimes^{\mrm{L}}_A E(A,\bar{\m}),B\otimes^{\mrm{L}}_A E(A,\bar{\m}))
\end{gather*}
in $\ho(\cdg)$.
By the discussion preceding this theorem,
we know that there is a map in $\ho(\cdg)$:
\[
\mrm{R}\opn{Hom}_A(E(A,\bar{\m}),E(A,\bar{\m})) \otimes^{\mrm{L}}_A B \to
\mrm{R}\opn{Hom}_B(B\otimes^{\mrm{L}}_A E(A,\bar{\m}),B\otimes^{\mrm{L}}_A E(A,\bar{\m})),
\]
and that it is an isomorphism if and only if the map
\begin{equation}\label{eqn:neededTensorEval}
\mrm{R}\opn{Hom}_A(E(A,\bar{\m}),E(A,\bar{\m})) \otimes^{\mrm{L}}_A B \to \mrm{R}\opn{Hom}_A(E(A,\bar{\m}),E(A,\bar{\m}) \otimes^{\mrm{L}}_A B )
\end{equation}
is an isomorphism.
Assuming for a moment that this is the case,
we deduce that there is an isomorphism
\[
\mrm{L}\Lambda(B,\bar{\n}) \cong \mrm{R}\opn{Hom}_A(E(A,\bar{\m}),E(A,\bar{\m})) \otimes^{\mrm{L}}_A B
\]
in $\ho(\cdg)$.
Invoking \cite[Theorem 7.22]{ShInj} again, we see that
\[
\mrm{R}\opn{Hom}_A(E(A,\bar{\m}),E(A,\bar{\m})) \otimes^{\mrm{L}}_A B \cong \mrm{L}\Lambda(A,\bar{\m}) \otimes^{\mrm{L}}_A B,
\]
and by Proposition \ref{prop:KoszulChange},
the latter is isomorphic to
\[
K( \mrm{L}\Lambda(A,\bar{\m}); \widehat{\bar{a}}_1,\dots,\widehat{\bar{a}}_n),
\]
as claimed.
The conclusion of the theorem will now follow from the fact that (\ref{eqn:neededTensorEval}) is an isomorphism,
and this in turn follows from Proposition \ref{prop:Tensor-eval-with-compact} below.
\end{proof}

\begin{prop}\label{prop:Tensor-eval-with-compact}
Let $A$ be a commutative noetherian DG-ring,
let $M,N \in \cat{D}(A)$,
and let $K \in \cat{D}(A)$ be a compact object.
Then the natural map
\[
\mrm{R}\opn{Hom}_A(M,N) \otimes^{\mrm{L}}_A K \to \mrm{R}\opn{Hom}_A(M,N \otimes^{\mrm{L}}_A K)
\]
is an isomorphism in $\cat{D}(A)$.
\end{prop}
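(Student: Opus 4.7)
The plan is to reduce this to the case $K = A$ by a d\'evissage argument, using that over a commutative noetherian DG-ring the compact objects of $\cat{D}(A)$ coincide with the thick subcategory generated by $A$ (this is the content of the results of Yekutieli invoked in the proof of Proposition \ref{prop:compact}). More precisely, denote the natural map by
\[
\theta_K : \mrm{R}\opn{Hom}_A(M,N) \otimes^{\mrm{L}}_A K \longrightarrow \mrm{R}\opn{Hom}_A(M,N \otimes^{\mrm{L}}_A K),
\]
and let $\mcal{C} \subseteq \cat{D}(A)$ be the full subcategory of those $K$ for which $\theta_K$ is an isomorphism. I would show that $\mcal{C}$ is a thick subcategory containing $A$, and conclude that $\mcal{C}$ contains every compact object.

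First I would observe that for $K = A$, both sides are canonically identified with $\mrm{R}\opn{Hom}_A(M,N)$ and $\theta_A$ is the identity, so $A \in \mcal{C}$. Next, both functors $K \mapsto \mrm{R}\opn{Hom}_A(M,N) \otimes^{\mrm{L}}_A K$ and $K \mapsto \mrm{R}\opn{Hom}_A(M,N \otimes^{\mrm{L}}_A K)$ are triangulated in $K$, and $\theta_K$ is a natural transformation between them. It follows immediately that $\mcal{C}$ is closed under shifts. Given a distinguished triangle $K_1 \to K_2 \to K_3 \to K_1[1]$ with $K_1, K_2 \in \mcal{C}$, one applies the triangulated five lemma to the induced morphism of triangles to deduce that $\theta_{K_3}$ is an isomorphism, so $K_3 \in \mcal{C}$. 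Finally, if $K = K' \oplus K''$ is a direct sum decomposition, then $\theta_K = \theta_{K'} \oplus \theta_{K''}$, so $\mcal{C}$ is closed under direct summands.

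Combining these three closure properties with $A \in \mcal{C}$, we conclude that $\mcal{C}$ contains the smallest thick triangulated subcategory of $\cat{D}(A)$ containing $A$. By the DG-ring analogue of the Neeman--Ravenel characterization of compact objects (which is the content of \cite[Theorem 5.11]{Ye1} and \cite[Theorem 5.20]{Ye1} used earlier in this section), this thick subcategory is precisely the subcategory of compact objects of $\cat{D}(A)$. Hence $K \in \mcal{C}$, and $\theta_K$ is an isomorphism, as required.

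I do not expect a real obstacle in this argument; the only subtle point is to make sure the tensor-evaluation map $\theta_K$ is indeed natural and triangulated in $K$ at the level of $\cat{D}(A)$, so that the five-lemma step is legitimate. This is a standard property of the derived bifunctors $\mrm{R}\opn{Hom}_A(-,-)$ and $- \otimes^{\mrm{L}}_A -$ over a commutative DG-ring and poses no difficulty.
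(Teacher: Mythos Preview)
Your proposal is correct and follows essentially the same strategy as the paper's proof. The paper also reduces to $K=A$ via the thick subcategory generated by $A$, citing \cite[Theorem 14.1.22]{YeBook} for the characterization of compact objects and \cite[Proposition 5.3.22]{YeBook} for the d\'evissage step; you simply spell out the latter by hand (shifts, cones via the five lemma, direct summands) and cite the analogous results from \cite{Ye1}, while the paper also invokes \cite[Theorem 12.9.10]{YeBook} for the point you flag at the end, that $\theta_K$ is a morphism of triangulated functors.
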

\begin{proof}
Fixing $M,N$,
it is shown in \cite[Theorem 12.9.10]{YeBook} that there is a functorial morphism
between triangulated functors
\[
\zeta_K: \mrm{R}\opn{Hom}_A(M,N) \otimes^{\mrm{L}}_A K \to \mrm{R}\opn{Hom}_A(M,N \otimes^{\mrm{L}}_A K).
\]
By \cite[Theorem 14.1.22]{YeBook},
the fact that $K$ is compact is equivalent to the fact that $K$ belongs to the saturated full triangulated subcategory of $\cat{D}(A)$ generated by $A$.
Then, by \cite[Proposition 5.3.22]{YeBook},
the fact that $\zeta_A$ is an isomorphism implies that $\zeta_K$ is an isomorphism for any such $K$.
\end{proof}

\section{Depth and height in Cohen-Macaulay DG-rings}\label{sec:dim}

The aim of this section is to study the following definition over Cohen-Macaulay DG-rings:

\begin{dfn}
Let $(A,\bar{\m})$ be a noetherian local DG-ring,
let $\bar{I} \subseteq \mrm{H}^0(A)$ be a proper ideal,
and let $M \in \cat{D}^{+}(A)$.

\begin{enumerate}
\item The $\bar{I}$-depth of $M$ is the number
\[
\opn{depth}_A(\bar{I},M) := \inf\left(\mrm{R}\opn{Hom}_A(\mrm{H}^0(A)/\bar{I},M)\right).
\]
\item The sequential $\bar{I}$-depth of $M$,
denoted by $\opn{seq.depth}_A(\bar{I},M)$,
is defined to be the maximal length of an $M$-regular sequence contained in $\bar{I}$.
\end{enumerate}
When $\bar{I} = \bar{\m}$, the maximal ideal of $\mrm{H}^0(A)$,
we write $\depth_A(M) := \depth_A(\bar{\m},M)$,
and $\opn{seq.depth}_A(M) := \opn{seq.depth}_A(\bar{\m},M)$.
\end{dfn}

Note that a-priori it is not clear why the sequential $\bar{I}$-depth of $M$ is finite.
The next result connects these two numbers, and in particular establishes the finiteness of the sequential $\bar{I}$-depth,
and shows that any two maximal $M$-regular sequences contained in $\bar{I}$ have the same length. 
In \cite[Theorem 2.15]{Mi} a similar result was proved in the case where $\bar{I} = \bar{\m}$.
 
\begin{prop}\label{prop:depth-formula}
Let $(A,\bar{\m})$ be a noetherian local DG-ring,
and let $0\ncong M \in \cat{D}^{+}_{\mrm{f}}(A)$.
Then for any proper ideal $\bar{I} \subseteq \mrm{H}^0(A)$,
there is an equality
\[
\opn{depth}_A(\bar{I},M) = \opn{seq.depth}_A(\bar{I},M) + \inf(M).
\]
\end{prop}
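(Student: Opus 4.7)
The plan is to follow the strategy Minamoto uses in \cite[Proposition 3.15]{Mi} for the case $\bar{I} = \bar{\m}$, adapting it so that an arbitrary proper ideal $\bar{I}$ appears consistently in place of the maximal ideal. Both inequalities will be proved separately by induction, each driven by the Koszul triangle $M \xrightarrow{\bar{x}} M \to K(M;\bar{x}) \to M[1]$ for a suitable $\bar{x} \in \bar{I}$. The decisive observation is that any such $\bar{x}$ annihilates $\mrm{H}^0(A)/\bar{I}$, so multiplication by $\bar{x}$ is zero on $\mrm{R}\opn{Hom}_A(\mrm{H}^0(A)/\bar{I},-)$, and the induced long exact sequence splits into short exact sequences
\[
0 \to \mrm{H}^i(\mrm{R}\opn{Hom}_A(\mrm{H}^0(A)/\bar{I}, M)) \to \mrm{H}^i(\mrm{R}\opn{Hom}_A(\mrm{H}^0(A)/\bar{I}, K(M;\bar{x}))) \to \mrm{H}^{i+1}(\mrm{R}\opn{Hom}_A(\mrm{H}^0(A)/\bar{I}, M)) \to 0.
\]
This yields the identity $\opn{depth}_A(\bar{I}, K(M;\bar{x})) = \opn{depth}_A(\bar{I}, M) - 1$. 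In parallel, inspecting the long exact cohomology sequence of the Koszul triangle itself shows that when $\bar{x}$ is $M$-regular, $\inf(K(M;\bar{x})) = \inf(M)$: the map $\bar{x}$ is injective on $\mrm{H}^{\inf(M)}(M)$, while its cokernel embeds into $\mrm{H}^{\inf(M)}(K(M;\bar{x}))$ and is nonzero by Nakayama.

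For the inequality $\opn{depth}_A(\bar{I},M) \ge \opn{seq.depth}_A(\bar{I},M) + \inf(M)$, I would induct on the length $s$ of an $M$-regular sequence $\bar{a}_1,\ldots,\bar{a}_s \in \bar{I}$. The base case $s = 0$ reduces to $\opn{depth}_A(\bar{I}, M) \ge \inf(M)$, which follows because $M \in \cat{D}^+(A)$ admits a K-injective resolution bounded below in degree $\inf(M)$ and $\mrm{H}^0(A)/\bar{I}$ is concentrated in cohomological degree zero. For the inductive step, the tail $\bar{a}_2,\ldots,\bar{a}_s$ is $K(M;\bar{a}_1)$-regular by definition, and the two identities from the first paragraph reduce the inequality for $M$ to the inductive hypothesis applied to $K(M;\bar{a}_1)$.

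For the reverse direction, I would induct on $d := \opn{depth}_A(\bar{I},M) - \inf(M) \ge 0$. The case $d = 0$ is immediate, so assume $d \ge 1$; the key step is producing an $M$-regular element $\bar{x} \in \bar{I}$. If no such element existed, every element of $\bar{I}$ would be a zero-divisor on the finitely generated $\mrm{H}^0(A)$-module $\mrm{H}^{\inf(M)}(M)$, so prime avoidance (using that $\opn{Ass}_{\mrm{H}^0(A)}(\mrm{H}^{\inf(M)}(M))$ is finite) would place $\bar{I}$ inside some associated prime, giving a nonzero map $\mrm{H}^0(A)/\bar{I} \to \mrm{H}^{\inf(M)}(M)$; identifying this group with $\mrm{H}^{\inf(M)}(\mrm{R}\opn{Hom}_A(\mrm{H}^0(A)/\bar{I}, M))$ would then force $\opn{depth}_A(\bar{I},M) = \inf(M)$, contradicting $d \ge 1$. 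Given such $\bar{x}$, applying the inductive hypothesis to $K(M;\bar{x})$, whose $d$ drops by $1$ while its infimum is unchanged, produces a $K(M;\bar{x})$-regular sequence of length $d-1$ in $\bar{I}$, which together with $\bar{x}$ assembles into an $M$-regular sequence of length $d$ in $\bar{I}$. The main technical point, and the step most likely to obstruct a naive adaptation, is the identification $\mrm{H}^{\inf(M)}(\mrm{R}\opn{Hom}_A(\mrm{H}^0(A)/\bar{I}, M)) \cong \opn{Hom}_{\mrm{H}^0(A)}(\mrm{H}^0(A)/\bar{I}, \mrm{H}^{\inf(M)}(M))$; in the DG setting this requires a truncation computation on a K-injective resolution of $M$ bounded below in degree $\inf(M)$, together with the fact that the $A$-action on $\mrm{H}^0(A)/\bar{I}$ factors through $\mrm{H}^0(A)$.
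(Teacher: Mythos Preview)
Your proof is correct and follows essentially the same strategy as the paper, both adapting \cite[Proposition 3.15]{Mi}. Two organizational differences are worth noting. First, the paper runs a single induction on $\opn{seq.depth}_A(\bar{I},M)$ and proves the equality directly, whereas you split into two inequalities with separate inductions. Second, in the inductive step the paper passes to the DG-ring $B = K(A;\bar{x})$ and cites \cite[Lemmas 3.9 and 3.13]{Mi} for the identities $\opn{depth}_B(\bar{J},N) = \opn{depth}_A(\bar{I},M)-1$ and $\inf(N) = \inf(M)$, while you stay over $A$ throughout and extract $\opn{depth}_A(\bar{I},K(M;\bar{x})) = \opn{depth}_A(\bar{I},M)-1$ directly from the Koszul triangle together with the vanishing of $\bar{x}$ on $\mrm{H}^0(A)/\bar{I}$. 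Your approach is more self-contained (essentially re-deriving Minamoto's lemmas inline); the paper's is more concise. The key identification $\mrm{H}^{\inf(M)}(\mrm{R}\opn{Hom}_A(\mrm{H}^0(A)/\bar{I},M)) \cong \opn{Hom}_{\mrm{H}^0(A)}(\mrm{H}^0(A)/\bar{I},\mrm{H}^{\inf(M)}(M))$ appears in both arguments; the paper obtains it via adjunction through $\mrm{R}\opn{Hom}_A(\mrm{H}^0(A),M)$ and \cite[Proposition 3.3]{ShInj}, which is exactly the truncation computation you describe.

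One minor caveat: your second induction is on $d = \opn{depth}_A(\bar{I},M) - \inf(M)$, which tacitly assumes this quantity is finite; you have not addressed this. The paper's single induction on $\opn{seq.depth}$ sidesteps the issue, since finiteness of depth emerges as a byproduct of the established equality. In your framework, finiteness follows easily by iterating your depth-drop identity over a generating set $\bar{a}_1,\dots,\bar{a}_n$ of $\bar{I}$: the iterated Koszul DG-module is nonzero by Nakayama and has bottom cohomology annihilated by $\bar{I}$, so your own identification forces its $\bar{I}$-depth to equal its infimum.
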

\begin{proof}
Let $\bar{x}_1,\dots,\bar{x}_n \in \bar{I}$ be any maximal $M$-regular sequence.
By that we mean that it cannot be extended to a longer $M$-regular sequence in $\bar{I}$, 
but it is possible that there might be a maximal $M$-regular sequence of longer length contained in $\bar{I}$.
Let $N = K(M;\bar{x}_1,\dots,\bar{x}_n)$.
Since $\bar{x}_1,\dots,\bar{x}_n$ is $M$-regular, 
it follows from \cite[Lemma 2.13]{Mi} that $\inf(N) = \inf(M)$.
The fact that the sequence $\bar{x}_1,\dots,\bar{x}_n$ cannot be extended to a longer $M$-regular sequence in $\bar{I}$
means that any element $\bar{x}\in \bar{I}$ is not $\mrm{H}^{\inf(N)}(N)$-regular.
This implies that $\opn{seq.depth}_{\mrm{H}^0(A)}(\bar{I},\mrm{H}^{\inf(N)}(N)) = 0$. 
By \cite[Theorem 16.6]{Mat}, this is equivalent to
\[
\opn{Hom}_{\mrm{H}^0(A)}(\mrm{H}^0(A)/\bar{I},\mrm{H}^{\inf(N)}(N)) \ne 0.
\]
Letting $B = K(A;\bar{x}_1,\dots,\bar{x}_n)$,
and denoting by $\bar{J}$ the image of $\bar{I}$ in $\mrm{H}^0(B)$, 
since $\bar{x}_1,\dots,\bar{x}_n \in \bar{I}$,
and since we have $\mrm{H}^0(B) = \mrm{H}^0(A)/(\bar{x}_1,\dots,\bar{x}_n)$,
the above is equivalent to
\[
\opn{Hom}_{\mrm{H}^0(B)}(\mrm{H}^0(B)/\bar{J},\mrm{H}^{\inf(N)}(N)) \ne 0.
\]
Since we have
\[
\mrm{R}\opn{Hom}_B(\mrm{H}^0(B)/\bar{J},N) \cong \mrm{R}\opn{Hom}_{\mrm{H}^0(B)}(\mrm{H}^0(B)/\bar{J},\mrm{R}\opn{Hom}_B(\mrm{H}^0(B),N)),
\]
and since by \cite[Proposition 3.3]{ShInj} we have that
\[
\inf\left(\mrm{R}\opn{Hom}_B(\mrm{H}^0(B),N)\right) = \inf(N),
\]
and 
\[
\mrm{H}^{\inf(N)}\left(\mrm{R}\opn{Hom}_B(\mrm{H}^0(B),N)\right) = \mrm{H}^{\inf(N)}(N),
\]
we see that
\begin{gather*}
\inf(\mrm{R}\opn{Hom}_B(\mrm{H}^0(B)/\bar{J},N)) =\\
\inf\left(\mrm{R}\opn{Hom}_{\mrm{H}^0(B)}(\mrm{H}^0(B)/\bar{J},\mrm{R}\opn{Hom}_B(\mrm{H}^0(B),N))\right) =
\inf(N).
\end{gather*}

This shows that $\opn{depth}_B(\bar{J},N) = \inf(N) = \inf(M)$.
On the other hand, by a repeated use of \cite[Lemma 2.9]{Mi},
we have that
\begin{gather*}
\mrm{R}\opn{Hom}_A(\mrm{H}^0(A)/\bar{I},M) \cong \mrm{R}\opn{Hom}_{K(A;\bar{x}_1)}(\mrm{H}^0(A)/\bar{I},K(M;\bar{x}_1)[-1]) \cong\\
\mrm{R}\opn{Hom}_{K(A;\bar{x}_1,\bar{x}_2)}(\mrm{H}^0(A)/\bar{I},K(M;\bar{x}_1,\bar{x}_2)[-2]) \cong \dots \cong
\mrm{R}\opn{Hom}_B(\mrm{H}^0(B)/\bar{J},N[-n]).
\end{gather*}
This implies that $\opn{depth}_B(\bar{J},N) = \opn{depth}_A(\bar{I},M) - n$,
and combined with the above calculation of $\opn{depth}_B(\bar{J},N) = \inf(M)$,
it shows that
\[
n = \opn{depth}_A(\bar{I},M) - \inf(M).
\]
Hence, any maximal $M$-regular sequence contained in $\bar{I}$ has length $\opn{depth}_A(\bar{I},M) - \inf(M)$.
\end{proof}

\begin{prop}\label{prop:depthPIn}
Let $(A,\bar{\m})$ be a noetherian local DG-ring,
and let $M \in \cat{D}^{+}(A)$.
Given $\bar{\p} \in \opn{Spec}(\mrm{H}^0(A))$,
we have that
\[
\opn{depth}_A(\bar{\p},M) \le \opn{depth}_{A_{\bar{\p}}}(M_{\bar{\p}}).
\]
\end{prop}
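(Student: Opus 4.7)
The plan is to establish the inequality by comparing the two $\mrm{R}\opn{Hom}$ complexes after localization at $\bar{\p}$, then exploiting exactness of that localization. Set $X := \mrm{R}\opn{Hom}_A(\mrm{H}^0(A)/\bar{\p},M)$, so that by definition $\opn{depth}_A(\bar{\p},M) = \inf(X)$. Since $A_{\bar{\p}} = A\otimes_{A^0} A^0_{\p}$ with $A^0_{\p}$ flat over $A^0$, the functor $(-)_{\bar{\p}} = (-)\otimes^{\mrm{L}}_A A_{\bar{\p}}$ is exact on $\cat{D}(A)$ and satisfies $\mrm{H}^n(Y_{\bar{\p}}) \cong \mrm{H}^n(Y)_{\bar{\p}}$ for every $Y \in \cat{D}(A)$; since localization of an $\mrm{H}^0(A)$-module can vanish, this gives the trivial bound $\inf(X_{\bar{\p}}) \ge \inf(X)$.

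Thus the proposition reduces to exhibiting a natural quasi-isomorphism
\[
X\otimes^{\mrm{L}}_A A_{\bar{\p}} \;\cong\; \mrm{R}\opn{Hom}_{A_{\bar{\p}}}(\mrm{H}^0(A_{\bar{\p}})/\bar{\p} A_{\bar{\p}}, M_{\bar{\p}}),
\]
whose right-hand side has infimum $\opn{depth}_{A_{\bar{\p}}}(M_{\bar{\p}})$ by definition. This quasi-isomorphism will be built in two steps. First, observe that $\mrm{H}^0(A)/\bar{\p} \otimes^{\mrm{L}}_A A_{\bar{\p}} \cong \mrm{H}^0(A_{\bar{\p}})/\bar{\p} A_{\bar{\p}}$ (both agree with the residue field at $\bar{\p}$, and the derived tensor coincides with the ordinary one by flatness), so tensor-hom adjunction along $A\to A_{\bar{\p}}$ supplies
\[
\mrm{R}\opn{Hom}_{A_{\bar{\p}}}(\mrm{H}^0(A_{\bar{\p}})/\bar{\p} A_{\bar{\p}}, M_{\bar{\p}}) \;\cong\; \mrm{R}\opn{Hom}_A(\mrm{H}^0(A)/\bar{\p}, M_{\bar{\p}}).
\]
Second, one needs the flat base change
\[
\mrm{R}\opn{Hom}_A(\mrm{H}^0(A)/\bar{\p}, M)\otimes^{\mrm{L}}_A A_{\bar{\p}} \;\cong\; \mrm{R}\opn{Hom}_A(\mrm{H}^0(A)/\bar{\p}, M_{\bar{\p}}).
\]

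The hard part will be this flat base change. Proposition \ref{prop:Tensor-eval-with-compact} does not apply directly, as $\mrm{H}^0(A)/\bar{\p}$ need not be a compact object of $\cat{D}(A)$. The remedy uses that $A$ is noetherian and $\mrm{H}^0(A)/\bar{\p} \in \cat{D}^{-}_{\mrm{f}}(A)$: such a DG-module admits a semi-free resolution $P \to \mrm{H}^0(A)/\bar{\p}$ over $A$ whose semi-basis is finite in every cohomological degree. Combined with the hypothesis $M \in \cat{D}^{+}(A)$, this forces $\opn{Hom}_A(P, M)^n$ to be a finite product for each $n$, so the degreewise tensor-evaluation $\opn{Hom}_A(P, M)\otimes_A A_{\bar{\p}} \to \opn{Hom}_A(P, M\otimes_A A_{\bar{\p}})$ is an isomorphism of graded $A$-modules; passing to cohomology through the exact K-flat functor $(-)\otimes_A A_{\bar{\p}}$ then delivers the required base-change quasi-isomorphism.
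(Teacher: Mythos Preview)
Your proof is correct and follows essentially the same route as the paper: define $X = \mrm{R}\opn{Hom}_A(\mrm{H}^0(A)/\bar{\p},M)$, identify $X_{\bar{\p}}$ with $\mrm{R}\opn{Hom}_{A_{\bar{\p}}}(\mrm{H}^0(A_{\bar{\p}})/\bar{\p}\mrm{H}^0(A_{\bar{\p}}),M_{\bar{\p}})$ via tensor--evaluation and adjunction, and conclude from $\inf(X) \le \inf(X_{\bar{\p}})$. The only difference is cosmetic: where the paper simply invokes \cite[Theorem~12.9.10]{YeBook} for the tensor--evaluation isomorphism (which applies since $\mrm{H}^0(A)/\bar{\p} \in \cat{D}^{-}_{\mrm{f}}(A)$, $M \in \cat{D}^{+}(A)$, and $A_{\bar{\p}}$ is K-flat over $A$), you instead spell out the argument by hand using a pseudo-finite semi-free resolution---which is exactly how that theorem is proved in this situation.
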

\begin{proof}
Consider the DG-module $X = \mrm{R}\opn{Hom}_A(\mrm{H}^0(A)/\bar{\p},M)$.
By definition, we have that
\[
\opn{depth}_A(\bar{\p},M) = \inf(X).
\]
On the other hand, by \cite[Theorem 12.9.10]{YeBook} and adjunction, we have that
\begin{gather*}
X_{\bar{\p}} = \mrm{R}\opn{Hom}_A(\mrm{H}^0(A)/\bar{\p},M) \otimes^{\mrm{L}}_A A_{\bar{\p}} \cong\\
\mrm{R}\opn{Hom}_A(\mrm{H}^0(A)/\bar{\p},M \otimes^{\mrm{L}}_A A_{\bar{\p}}) \cong
\mrm{R}\opn{Hom}_{A_{\bar{\p}}}(\mrm{H}^0(A)_{\bar{\p}}/\bar{\p}\mrm{H}^0(A)_{\bar{\p}},M_{\bar{\p}}).
\end{gather*}
We see that 
\[
\opn{depth}_{A_{\bar{\p}}}(M_{\bar{\p}}) = \inf(X_{\bar{\p}}).
\]
The result now follows from the inequality $\inf(X) \le \inf(X_{\bar{\p}})$.
\end{proof}

Unfortunately, the corresponding inequality does not hold,
when depth is replaced by sequential depth, as the next example shows.
This example uses the notion of a trivial extension DG-ring,
defined in \cite[Section 1]{Jo}, 
and recalled, in our notation, in \cite[Section 7]{ShCM}.

\begin{exa}\label{exa:seq-depth}
Let $\k$ be a field,
let $B = \k[[x,y]]/(x\cdot y)$,
and let $M$ be the $B$-module $M = B/(x) = \k[[y]]$.
Let $A = B \skewtimes M[2]$ be a trivial extension DG-ring,
and consider the prime ideal $\bar{\q} = (y) \in \opn{Spec}(\mrm{H}^0(A))$.
Since the element $y \in \mrm{H}^0(A)$ is $M$-regular,
and $M = \mrm{H}^{\inf(A)}(A)$,
it follows that $\opn{seq.depth}_A(\bar{\q},A) \ge 1$.
However, the localization of $A$ at $\bar{\q}$ is quasi-isomorphic to a field,
so that 
\[
\opn{seq.depth}_{A_{\bar{\q}}}(A_{\bar{\q}}) = 0 < \opn{seq.depth}_A(\bar{\q},A).
\]
Notice that $\bar{\q} \notin \opn{Supp}(\mrm{H}^{\inf(A)}(A))$.
\end{exa}

The failure of the above is the reason why in most of the paper we would have to assume that our DG-rings satisfy
$\opn{Supp}(\mrm{H}^{\inf(A)}(A)) = \opn{Spec}(\mrm{H}^0(A))$.
We will return to discussing this example in Example \ref{exa:counter} below.
In what follows we shall use the following terminology.
\begin{dfn}
Let $A$ be a commutative noetherian DG-ring with bounded cohomology.
We say that $A$ has constant amplitude if $\opn{Supp}(\mrm{H}^{\inf(A)}(A)) = \opn{Spec}(\mrm{H}^0(A))$.
\end{dfn}

Let us explain the reason for this terminology.
The condition that $\opn{Supp}(\mrm{H}^{\inf(A)}(A)) = \opn{Spec}(\mrm{H}^0(A))$ 
is equivalent to saying that for any $\bar{\p} \in \opn{Spec}(\mrm{H}^0(A))$,
it holds that 
\[
\left(\mrm{H}^{\inf(A)}(A)\right)_{\bar{\p}} \cong \left(\mrm{H}^{\inf(A)}(A_{\bar{\p}})\right) \ncong 0,
\]
or equivalently that $\inf(A) = \inf(A_{\bar{\p}})$.
Since for any $\bar{\p} \in \opn{Spec}(\mrm{H}^0(A))$,
we have that $\sup(A_{\bar{\p}}) = 0$,
the condition that $\opn{Supp}(\mrm{H}^{\inf(A)}(A)) = \opn{Spec}(\mrm{H}^0(A))$ 
is thus equivalent to the fact that the function $\opn{Spec}(\mrm{H}^0(A)) \to \mathbb{N}$ given by $\bar{\p} \mapsto \amp(A_{\bar{\p}})$ is constant.

The next lemma was proved by Bass (\cite[Lemma 3.1]{Ba}) in the case where $M$ is a finitely generated module.
We will need this lemma for bounded below complexes with finitely generated cohomology.
The same proof of Bass essentially works in the more general case.
Because of the centrality of this lemma in what follows,
we give a full proof here.

\begin{lem}\label{lem:Bass}
Let $(A,\m)$ be a noetherian local ring,
let $M \in \cat{D}^{+}_{\mrm{f}}(A)$,
and let $\p \subseteq \m$ be a prime ideal,
such that $\dim(A/\p) = 1$.
Suppose that
\[
\opn{Ext}^n_A(A/\p,M) \ne 0.
\]
Then
\[
\opn{Ext}^{n+1}_A(A/\m,M) \ne 0.
\]
\end{lem}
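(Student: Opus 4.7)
The plan is to imitate Bass's original argument, paying attention to the fact that $M$ is now a bounded below complex with finitely generated cohomology rather than a finitely generated module. Since $\dim(A/\p) = 1$ we have $\p \subsetneq \m$, so I may pick some $x \in \m \setminus \p$. Because $A/\p$ is a domain, multiplication by $x$ is injective on $A/\p$, yielding the short exact sequence
\[
0 \to A/\p \xrightarrow{\,\cdot\, x} A/\p \to A/(\p + xA) \to 0
\]
of finitely generated $A$-modules. Applying $\mrm{R}\opn{Hom}_A(-,M)$ and passing to cohomology produces a long exact Ext sequence containing the fragment
\[
\opn{Ext}^n_A(A/\p, M) \xrightarrow{\,\cdot\, x} \opn{Ext}^n_A(A/\p, M) \to \opn{Ext}^{n+1}_A(A/(\p + xA), M).
\]

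The central claim is that $\opn{Ext}^n_A(A/\p, M)$ is a nonzero finitely generated module over the local ring $A/\p$. Since $A$ is noetherian, the module $A/\p$ admits a resolution by finitely generated free $A$-modules, and since $M \in \cat{D}^{+}_{\mrm{f}}(A)$, a standard argument shows $\mrm{R}\opn{Hom}_A(A/\p,M) \in \cat{D}^{+}_{\mrm{f}}(A)$, so each $\opn{Ext}^n_A(A/\p,M)$ is finitely generated over $A$. It is annihilated by $\p$ because the two ways of computing the $\p$-action on an Ext group agree, and the action coming through $A/\p$ is zero. Hence $\opn{Ext}^n_A(A/\p,M)$ is a finitely generated module over $A/\p$, whose maximal ideal $\m/\p$ contains the image of $x$. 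Nakayama's lemma then forces multiplication by $x$ on this nonzero module to be non-surjective, so its cokernel is nonzero, and by the displayed fragment embeds into $\opn{Ext}^{n+1}_A(A/(\p + xA), M)$, which is therefore nonzero.

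It remains to pass from $A/(\p+xA)$ to $A/\m$. Since $x \notin \p$ and $\dim(A/\p) = 1$, the module $A/(\p + xA)$ has Krull dimension zero, hence finite length. A standard d\'evissage using the fact that $A/\m$ embeds in the socle of any nonzero finite-length $A$-module, together with the long exact Ext sequence, reduces the nonvanishing of $\opn{Ext}^{n+1}_A(A/(\p + xA), M)$ inductively to the nonvanishing of $\opn{Ext}^{n+1}_A(A/\m, M)$, completing the argument.

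The only place the complex structure of $M$ really intervenes is in the finite-generation claim for $\opn{Ext}^n_A(A/\p,M)$, which is what enables the Nakayama step; this is where the hypothesis $M \in \cat{D}^{+}_{\mrm{f}}(A)$ is essential, and I expect it to be the main technical point to verify carefully. Beyond this, the argument is formally identical to Bass's original proof, and I anticipate no further obstacles.
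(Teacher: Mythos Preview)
Your proof is correct and follows essentially the same approach as the paper's: both choose $x\in\m\setminus\p$, use the short exact sequence $0\to A/\p\xrightarrow{\cdot x}A/\p\to A/(\p+xA)\to 0$ together with Nakayama to obtain nonvanishing of $\opn{Ext}^{n+1}$ at the finite-length module $A/(\p+xA)$, and then d\'evisse by length to reach $A/\m$. Your explicit justification that $\opn{Ext}^n_A(A/\p,M)$ is finitely generated (using $M\in\cat{D}^{+}_{\mrm{f}}(A)$) is exactly the point the paper alludes to when invoking Nakayama, and your d\'evissage via socle submodules is the mirror image of the paper's d\'evissage via residue-field quotients---both are equally valid.
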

\begin{proof}
Let us choose some $x \in \m$ such that $x \notin \p$,
and set $B = A/\p$. Let $C = B/xB$.
Observe that $B$ is an integral domain, and $0 \ne x \in B$.
This implies that $C$ is zero dimensional, 
so that $C$ has finite length.
The short exact sequence of $A$-modules
\[
0 \to B \xrightarrow{\cdot x} B \to C \to 0
\]
implies that there is an exact sequence
\[
\opn{Ext}^n_A(C,M) \to \opn{Ext}^n_A(B,M) \xrightarrow{\cdot x} \opn{Ext}^n_A(B,M) \to \opn{Ext}^{n+1}_A(C,M)
\]
By Nakayama's lemma, we know that the map $\opn{Ext}^n_A(B,M) \xrightarrow{\cdot x} \opn{Ext}^n_A(B,M)$ cannot be surjective,
so by assumption, we must have that $\opn{Ext}^{n+1}_A(C,M) \ne 0$.
Now, as $C$ is a module of finite length,
we may find some $A$-module $C' \subseteq C$,
such that $C/C' \cong A/\m$,
and such that $\ell(C')  < \ell(C)$.
This gives an exact sequence
\[
0 \to C' \to C \to C/C' \to 0
\]
which in turn implies that there is an exact sequence
\[
\opn{Ext}^{n+1}_A(C/C',M) \to \opn{Ext}^{n+1}_A(C,M) \to \opn{Ext}^{n+1}_A(C',M)
\]
Since $\opn{Ext}^{n+1}_A(C,M) \ne 0$,
at least one of $\opn{Ext}^{n+1}_A(C/C',M)$ and $\opn{Ext}^{n+1}_A(C',M)$ must be non-zero.
If $\opn{Ext}^{n+1}_A(C/C',M)$ we are done, and if not, 
we replace $C$ by $C'$ and repeat this process,
until we arrive to the conclusion that $\opn{Ext}^{n+1}_A(A/\m,M) \ne 0$.
\end{proof}

\begin{lem}\label{lem:generalBass}
Let $(A,\m)$ be a noetherian local ring,
let $M \in \cat{D}^{+}_{\mrm{f}}(A)$,
and let $\p \in \opn{Spec}(A)$ be such that $\dim(A/\p) = d$.
Suppose that
\[
\opn{Ext}^n_A(A/\p,M) \ne 0.
\]
Then
\[
\opn{Ext}^{n+d}_A(A/\m,M) \ne 0.
\]
\end{lem}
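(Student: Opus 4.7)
The plan is to proceed by induction on $d = \dim(A/\p)$. The base cases are immediate: $d = 0$ forces $\p = \m$, so the statement is trivial; $d = 1$ is precisely Lemma \ref{lem:Bass}.

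For the inductive step ($d \ge 2$), the strategy is to find a prime $\q \supsetneq \p$ with $\dim(A/\q) = d - 1$ and $\operatorname{Ext}^{n+1}_A(A/\q, M) \ne 0$, and then invoke the induction hypothesis to conclude $\operatorname{Ext}^{n+d}_A(A/\m, M) \ne 0$. To produce such a $\q$, I would choose $x \in \m \setminus \p$. Since $A/\p$ is an integral domain, the sequence
\[
0 \to A/\p \xrightarrow{\cdot x} A/\p \to A/(\p + xA) \to 0
\]
is exact, and $\dim A/(\p+xA) = d - 1$ by Krull's Hauptidealsatz. Writing $E = \operatorname{Ext}^n_A(A/\p, M)$, which is a nonzero finitely generated module over the local ring $A/\p$, Nakayama's lemma (with $x \in \m$) implies $xE \ne E$. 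The long exact sequence in Ext then forces $\operatorname{Ext}^{n+1}_A(A/(\p+xA), M) \ne 0$.

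To extract the desired prime $\q$, take a prime filtration $0 = N_0 \subset N_1 \subset \cdots \subset N_r = A/(\p+xA)$ with successive quotients $N_i/N_{i-1} \cong A/\q_i$ and each $\q_i \supseteq \p+xA$. Iterating the Ext long exact sequences through this filtration forces $\operatorname{Ext}^{n+1}_A(A/\q_i, M) \ne 0$ for some $i$. Since $A/\p$ is a domain on which $x$ acts as a non-zero-divisor, every minimal prime of $\p+xA$ has coheight exactly $d - 1$ in $A$. The main obstacle I foresee is ensuring that the $\q_i$ detecting the non-vanishing can indeed be chosen with $\dim(A/\q_i) = d-1$, rather than that of some embedded prime of strictly smaller coheight (in which case the induction hypothesis would only produce Ext non-vanishing at a degree strictly below $n+d$). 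I would handle this by separating $A/(\p+xA)$ into its pure $(d-1)$-dimensional quotient and its lower-dimensional embedded submodule via a short exact sequence, and using the associated long exact sequence in Ext to attribute the non-vanishing to the pure part. Once this is done, the induction hypothesis applied to a $\q$ of coheight $d-1$ completes the argument.
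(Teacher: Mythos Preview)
Your inductive framework and the use of Nakayama to force $\opn{Ext}^{n+1}_A(A/(\p+xA),M)\ne 0$ are fine. The gap is in the passage from $A/(\p+xA)$ to a single prime $\q$ with $\dim(A/\q)=d-1$.

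First, your assertion that ``every minimal prime of $\p+xA$ has coheight exactly $d-1$'' is not true in general. Krull's principal ideal theorem gives $\opn{ht}(\q/\p)=1$ for such a minimal prime $\q$, but from this one only obtains $\dim(A/\q)\le d-1$; equality would require $A/\p$ to be catenary, which is not assumed. So even before worrying about embedded primes, the filtration quotients $A/\q_i$ need not have coheight $d-1$.

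Second, the proposed fix does not close the gap. With $N=A/(\p+xA)$ and $N'\subseteq N$ the maximal submodule of dimension $<d-1$, the long exact sequence
\[
\opn{Ext}^{n+1}_A(N/N',M)\to\opn{Ext}^{n+1}_A(N,M)\to\opn{Ext}^{n+1}_A(N',M)
\]
only says that one of the outer terms is nonzero. If it is the right-hand term, a prime filtration of $N'$ produces $\opn{Ext}^{n+1}_A(A/\q,M)\ne 0$ with $\dim(A/\q)=e<d-1$, and the induction hypothesis then yields $\opn{Ext}^{n+1+e}_A(A/\m,M)\ne 0$, which is nonvanishing in the \emph{wrong} degree $n+1+e<n+d$. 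There is no mechanism in your outline to rule this case out.

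The paper avoids the difficulty by choosing the prime $\q$ \emph{before} doing any homological algebra. Take a chain $\p=\p_0\subsetneq\p_1\subsetneq\cdots\subsetneq\p_d=\m$ of length $d$; such a chain exists because $\dim(A/\p)=d$, and it is automatically saturated (inserting a prime would give a chain of length $>d$). Setting $\q=\p_1$ one has both $\dim(A/\q)=d-1$ and $\dim(A_\q/\p A_\q)=1$ by construction, with no catenarity needed. One then localizes at $\q$, applies Lemma~\ref{lem:Bass} over the local ring $A_\q$ to get $\opn{Ext}^{n+1}_{A_\q}(k(\q),M_\q)\ne 0$, and invokes the standard fact about Bass numbers (\cite[Corollary~2.4]{Ba}) that for $M\in\cat{D}^+_{\mrm{f}}(A)$ one has $\opn{Ext}^i_A(A/\q,M)\ne 0$ if and only if its localization at $\q$ is nonzero. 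This descends the nonvanishing back to $\opn{Ext}^{n+1}_A(A/\q,M)$, and the induction hypothesis finishes the proof.
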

\begin{proof}
The proof is by induction on $d$.
There is nothing to prove if $d=0$, 
and the case where $d=1$ was shown in Lemma \ref{lem:Bass}.
Assume $d>1$.
Then we may find a prime ideal $\p \subsetneq \q \subsetneq \m$,
such that there are no primes between $\p$ and $\q$,
and such that $\dim(A/\q) = d-1$.
Since $\opn{Ext}^n_A(A/\p,M) \ne 0$,
it follows from \cite[Corollary 2.4]{Ba} that
\[
\opn{Ext}^{n}_{A_{\q}}(A_{\q}/\p A_{\q},M_{\q}) \ne 0.
\]
By Lemma \ref{lem:Bass},
this implies that
\[
\opn{Ext}^{n+1}_{A_{\q}}(A_{\q}/\q A_{\q},M_{\q}) \ne 0,
\]
which implies by \cite[Corollary 2.4]{Ba} that
\[
\opn{Ext}^{n+1}_{A}(A/\q,M) \ne 0.
\]
Since $\dim(A/\q) = d-1$,
by the induction hypothesis we deduce that
\[
\opn{Ext}^{n+d}_{A}(A/\m,M) \ne 0.
\]
\end{proof}

The next result generalizes \cite[Theorem 17.2]{Mat} to the DG setting.

\begin{thm}\label{thm:depth-bound}
Let $(A,\m)$ be a noetherian local DG-ring,
let $M \in \cat{D}^{+}_{\mrm{f}}(A)$,
let $\bar{\p} \in \opn{Ass}_A(M)$,
and suppose that $\p \in \opn{Supp}(\mrm{H}^{\inf(M)}(M))$.
Then
\[
\opn{seq.depth}_A(M) \le \dim(\mrm{H}^0(A)/\bar{\p}).
\]
\end{thm}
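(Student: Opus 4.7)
The plan is to transfer the problem to the noetherian local ring $\mrm{H}^0(A)$ and apply the classical Bass-type bound obtained in Lemma \ref{lem:generalBass}, then convert from depth to sequential depth by means of Proposition \ref{prop:depth-formula}. The transfer is mediated by the DG-module $N := \mrm{R}\opn{Hom}_A(\mrm{H}^0(A),M)$, which, as recalled in the proof of Proposition \ref{prop:depth-formula}, satisfies $\inf(N) = \inf(M)$, $\mrm{H}^{\inf(M)}(N) = \mrm{H}^{\inf(M)}(M)$, and the natural adjunction isomorphism
\[
\mrm{R}\opn{Hom}_A(\mrm{H}^0(A)/\bar{J},M) \;\cong\; \mrm{R}\opn{Hom}_{\mrm{H}^0(A)}(\mrm{H}^0(A)/\bar{J},N)
\]
for any ideal $\bar{J} \subseteq \mrm{H}^0(A)$.

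The first step is to produce a non-vanishing Ext in degree $\inf(M)$. The hypothesis $\bar{\p} \in \opn{Supp}(\mrm{H}^{\inf(M)}(M))$ gives $\mrm{H}^{\inf(M)}(M_{\bar{\p}}) = \mrm{H}^{\inf(M)}(M)_{\bar{\p}} \neq 0$, hence $\inf(M_{\bar{\p}}) = \inf(M)$. Combined with $\bar{\p} \in \opn{Ass}_A(M)$, this forces $\depth_{A_{\bar{\p}}}(M_{\bar{\p}}) = \inf(M)$. Arguing as in Proposition \ref{prop:depthPIn}, one localizes $X := \mrm{R}\opn{Hom}_A(\mrm{H}^0(A)/\bar{\p},M)$ at $\bar{\p}$ to conclude $\inf(X_{\bar{\p}}) = \inf(M)$, so in particular $\mrm{H}^{\inf(M)}(X) \neq 0$. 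Via the adjunction this amounts to $\opn{Ext}^{\inf(M)}_{\mrm{H}^0(A)}(\mrm{H}^0(A)/\bar{\p},N) \neq 0$.

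Writing $d := \dim(\mrm{H}^0(A)/\bar{\p})$, I then apply Lemma \ref{lem:generalBass} to $N$ over $\mrm{H}^0(A)$ to upgrade the non-vanishing to $\opn{Ext}^{\inf(M)+d}_{\mrm{H}^0(A)}(\mrm{H}^0(A)/\bar{\m},N) \neq 0$. Reading this back through the adjunction shows that $\mrm{H}^{\inf(M)+d}(\mrm{R}\opn{Hom}_A(\mrm{H}^0(A)/\bar{\m},M)) \neq 0$, so $\depth_A(M) \le \inf(M) + d$. Proposition \ref{prop:depth-formula} then yields
\[
\opn{seq.depth}_A(M) \;=\; \depth_A(M) - \inf(M) \;\le\; d \;=\; \dim(\mrm{H}^0(A)/\bar{\p}),
\]
which is the desired inequality.

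The main technical obstacle I anticipate is verifying the hypothesis of Lemma \ref{lem:generalBass} for $N$, namely that $N \in \cat{D}^{+}_{\mrm{f}}(\mrm{H}^0(A))$. Boundedness below is immediate from $\inf(N) = \inf(M)$, but the finite generation of each $\mrm{H}^i(N)$ as an $\mrm{H}^0(A)$-module needs a separate argument, most naturally by taking a semi-free resolution of $M$ whose generating module in each cohomological degree is finitely generated and then running the hypercohomology spectral sequence, or else by a truncation-and-induction argument anchored at the already-identified bottom cohomology $\mrm{H}^{\inf(M)}(N) = \mrm{H}^{\inf(M)}(M)$.
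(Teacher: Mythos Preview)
Your proposal is correct and follows essentially the same route as the paper's proof: transfer to $N = \mrm{R}\opn{Hom}_A(\mrm{H}^0(A),M)$, use the associated-prime hypothesis together with $\bar{\p} \in \opn{Supp}(\mrm{H}^{\inf(M)}(M))$ to get non-vanishing of $\opn{Ext}^{\inf(M)}_{\mrm{H}^0(A)}(\mrm{H}^0(A)/\bar{\p},N)$, apply Lemma~\ref{lem:generalBass}, and finish with Proposition~\ref{prop:depth-formula}. The technical point you flag---that $N \in \cat{D}^{+}_{\mrm{f}}(\mrm{H}^0(A))$---is handled in the paper by a direct citation of \cite[Theorem~2.13]{Ye1}, so no spectral sequence or induction argument is needed.
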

\begin{proof}
Since $\bar{\p} \in \opn{Ass}_A(M)$,
\[
\opn{depth}_{A_{\bar{\p}}}(M_{\bar{\p}}) = \inf(M_{\bar{\p}}) = \inf(M).
\]
By Proposition \ref{prop:depthPIn},
\[
\opn{depth}_A(\bar{\p},M) \le \opn{depth}_{A_{\bar{\p}}}(M_{\bar{\p}}) = \inf(M),
\]
so from Proposition \ref{prop:depth-formula} we get
\[
\opn{seq.depth}_A(\bar{\p},M) = \opn{depth}_A(\bar{\p},M) - \inf(M) \le 0.
\]
As this number is, by definition, non-negative,
we deduce that $\opn{seq.depth}_A(\bar{\p},M) = 0$.
Hence, $\opn{depth}_A(\bar{\p},M) = \inf(M)$,
so that
\[
\inf\left(\mrm{R}\opn{Hom}_A(\mrm{H}^0(A)/\bar{\p},M)\right) = \inf(M).
\]
Let us set $i = \inf(M)$.
The isomorphism
\[
\mrm{R}\opn{Hom}_A(\mrm{H}^0(A)/\bar{\p},M) \cong \mrm{R}\opn{Hom}_{\mrm{H}^0(A)}(\mrm{H}^0(A)/\bar{\p},\mrm{R}\opn{Hom}_A(\mrm{H}^0(A),M))
\]
implies that
\[
\opn{Ext}^{i}_{\mrm{H}^0(A)}(\mrm{H}^0(A)/\bar{\p},\mrm{R}\opn{Hom}_A(\mrm{H}^0(A),M)) \ne 0.
\]
It follows from \cite[Theorem 2.13]{Ye1} that
\[
\mrm{R}\opn{Hom}_A(\mrm{H}^0(A),M) \in \cat{D}^{+}_{\mrm{f}}(\mrm{H}^0(A)).
\]
Letting $d = \dim(\mrm{H}^0(A)/\bar{\p})$,
we deduce from Lemma \ref{lem:generalBass} that
\[
\opn{Ext}^{i+d}_{\mrm{H}^0(A)}(\mrm{H}^0(A)/\bar{\m},\mrm{R}\opn{Hom}_A(\mrm{H}^0(A),M)) \ne 0.
\]
Using adjunction again, 
this implies that
\[
\mrm{H}^{i+d}\left(\mrm{R}\opn{Hom}_A(\mrm{H}^0(A)/\bar{\m},M)\right) \ne 0,
\]
which shows that $\opn{depth}_A(M) \le \inf(M) + \dim(\mrm{H}^0(A)/\bar{\p})$.
Hence, Proposition \ref{prop:depth-formula} gives:
\[
\opn{seq.depth}_A(M) = \opn{depth}_A(M) - \inf(M) \le \dim(\mrm{H}^0(A)/\bar{\p}),
\]
as claimed.
\end{proof}

We will now use this result to study Cohen-Macaulay DG-rings.
In the next result we shall use the notions of associated primes of DG-modules and the set $W_0^A$ which were recalled in Section \ref{sec:regSeq}.

\begin{prop}\label{prop:height-of-ass}
Let $(A,\bar{\m})$ be a Cohen-Macaulay local DG-ring.
Then $\bar{\p} \in \opn{Ass}(A)$ if and only if 
$\bar{\p} \in W_0^A(A)$ if and only if
$\opn{ht}(\bar{\p}) = 0$.
\end{prop}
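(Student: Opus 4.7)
The plan is to establish the two equivalences separately, with localization to $A_{\bar{\p}}$ being the key reduction in both cases. The proof should be relatively short because all the substantive work has already been done in Proposition \ref{prop:depth-formula} and in the definition of local-Cohen-Macaulay; the remaining task is essentially a direct computation at the local level.

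First I would handle the equivalence $\bar{\p} \in W_0^A(A) \iff \opn{ht}(\bar{\p}) = 0$. Since $A$ is non-positive and $\mrm{H}^0(A)_{\bar{\p}} \ne 0$, we have $\sup(A_{\bar{\p}}) = 0$, so membership in $W_0^A(A)$ is equivalent to $\lcdim_{A_{\bar{\p}}}(A_{\bar{\p}}) = 0$. Unraveling the definition of $\lcdim$, the $n=0$ term contributes exactly $\opn{ht}(\bar{\p})$, giving the lower bound $\lcdim_{A_{\bar{\p}}}(A_{\bar{\p}}) \ge \opn{ht}(\bar{\p})$. For the matching upper bound, observe that every $\mrm{H}^n(A_{\bar{\p}})$ is an $\mrm{H}^0(A_{\bar{\p}})$-module, so its Krull dimension is at most $\opn{ht}(\bar{\p})$, and combined with $n \le 0$ this forces $\dim(\mrm{H}^n(A_{\bar{\p}})) + n \le \opn{ht}(\bar{\p})$. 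Hence $\lcdim_{A_{\bar{\p}}}(A_{\bar{\p}}) = \opn{ht}(\bar{\p})$, and the equivalence follows. Note that the Cohen-Macaulay hypothesis is not even used here.

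Next I would address $\bar{\p} \in \opn{Ass}(A) \iff \opn{ht}(\bar{\p}) = 0$. Here I would localize at $\bar{\p}$. By hypothesis, $A_{\bar{\p}}$ is a local-Cohen-Macaulay DG-ring, so
\[
\opn{seq.depth}_{A_{\bar{\p}}}(A_{\bar{\p}}) = \dim(\mrm{H}^0(A_{\bar{\p}})) = \opn{ht}(\bar{\p}).
\]
Applying Proposition \ref{prop:depth-formula} to the local DG-ring $A_{\bar{\p}}$ with $M = A_{\bar{\p}}$ gives
\[
\opn{depth}_{A_{\bar{\p}}}(A_{\bar{\p}}) = \opn{ht}(\bar{\p}) + \inf(A_{\bar{\p}}).
\]
Since $\bar{\p} \in \opn{Ass}_A(A)$ is by definition equivalent to $\opn{depth}_{A_{\bar{\p}}}(A_{\bar{\p}}) = \inf(A_{\bar{\p}})$, the displayed equality immediately yields the desired equivalence.

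There is really no substantive obstacle in this proof. The only thing to be careful about is that for $W_0^A(A)$ one must verify both the lower bound and the matching upper bound for $\lcdim_{A_{\bar{\p}}}(A_{\bar{\p}})$; the trick being that each $\mrm{H}^n(A_{\bar{\p}})$ is a module over the $0$-dimensional-or-larger ring $\mrm{H}^0(A_{\bar{\p}})$ controls its dimension. Once this observation is made, both equivalences drop out of the earlier machinery almost formally, and notably the proof does not need to invoke the heavier Theorem \ref{thm:depth-bound}.
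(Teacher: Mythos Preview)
Your proof is correct and follows essentially the same approach as the paper: both arguments localize at $\bar{\p}$, use the identity $\opn{depth}_{A_{\bar{\p}}}(A_{\bar{\p}}) = \opn{seq.depth}_{A_{\bar{\p}}}(A_{\bar{\p}}) + \inf(A_{\bar{\p}})$ together with the Cohen-Macaulay condition for the $\opn{Ass}$ equivalence, and the identity $\lcdim_{A_{\bar{\p}}}(A_{\bar{\p}}) = \dim(\mrm{H}^0(A)_{\bar{\p}})$ for the $W_0^A$ equivalence. The only cosmetic difference is that you derive the $\lcdim$ identity directly from the definition, whereas the paper cites it as \cite[Equation (2.6)]{ShCM}.
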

\begin{proof}
By definition, $\bar{\p} \in \opn{Ass}(A)$ if and only if $\depth_{A_{\bar{\p}}}(A_{\bar{\p}}) = \inf(A_{\bar{\p}})$.
On the other hand,
\[
\depth(A_{\bar{\p}}) = \opn{seq.depth}_{A_{\bar{\p}}}(A_{\bar{\p}}) + \inf(A_{\bar{\p}}),
\]
so we deduce that $\bar{\p} \in \opn{Ass}(A)$ if and only if $\opn{seq.depth}_{A_{\bar{\p}}}(A_{\bar{\p}}) = 0$.
But $A_{\bar{\p}}$ is a Cohen-Macaulay DG-ring,
so that
\[
\opn{seq.depth}_{A_{\bar{\p}}}(A_{\bar{\p}}) = \dim(\mrm{H}^0(A_{\bar{\p}})) = \dim(\mrm{H}^0(A)_{\bar{\p}}).
\]
We see that $\bar{\p} \in \opn{Ass}(A)$ if and only if $\dim(\mrm{H}^0(A)_{\bar{\p}}) = 0$, as claimed.
Similarly, according to \cite[Equation (5.12)]{ShCM},
$\bar{\p} \in W_0^A(A)$ if and only if $\lcdim_{A_{\bar{\p}}}(A_{\bar{\p}}) = \sup(A_{\bar{\p}})$,
but by \cite[Equation (2.6)]{ShCM}, we have that
\[
\lcdim_{A_{\bar{\p}}}(A_{\bar{\p}}) = \dim(\mrm{H}^0(A)_{\bar{\p}}),
\]
and $\sup(A_{\bar{\p}}) = 0$, which implies the result.
\end{proof}

It is well known that Cohen-Macaulay local rings are equidimensional.
This is false for Cohen-Macaulay local DG-rings,
because, for example, any noetherian local ring $A$ which has a dualizing complex can be realized as $\mrm{H}^0(B)$,
where $B$ is a Cohen-Macaulay DG-ring. 
However, we now show that if we assume that the bottom cohomology of $B$ has full support, 
then Cohen-Macaulay local DG-rings are equidimensional.

\begin{cor}\label{cor:equid}
Let $(A,\bar{\m})$ be a Cohen-Macaulay local DG-ring with constant amplitude.
Then $\mrm{H}^0(A)$ is equidimensional.
\end{cor}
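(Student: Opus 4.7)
The plan is to combine Proposition \ref{prop:height-of-ass} (which identifies $\opn{Ass}(A)$ with the minimal primes of $\mrm{H}^0(A)$) with the depth-coheight bound of Theorem \ref{thm:depth-bound}, exploiting the constant amplitude hypothesis precisely where it is needed.

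First, I would recall that for $\mrm{H}^0(A)$ to be equidimensional means that for every minimal prime $\bar{\p}$ of $\mrm{H}^0(A)$ one has $\dim(\mrm{H}^0(A)/\bar{\p}) = \dim(\mrm{H}^0(A))$. Fix such a minimal prime $\bar{\p}$. By Proposition \ref{prop:height-of-ass}, the minimal primes of $\mrm{H}^0(A)$ (those of height zero) coincide with the associated primes of $A$, so $\bar{\p} \in \opn{Ass}_A(A)$. The constant amplitude assumption says exactly that $\opn{Supp}(\mrm{H}^{\inf(A)}(A)) = \opn{Spec}(\mrm{H}^0(A))$, and in particular $\bar{\p} \in \opn{Supp}(\mrm{H}^{\inf(A)}(A))$, so the hypothesis of Theorem \ref{thm:depth-bound} is met for $M = A$.

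Applying Theorem \ref{thm:depth-bound} then yields
\[
\opn{seq.depth}_A(A) \le \dim(\mrm{H}^0(A)/\bar{\p}).
\]
On the other hand, since $(A,\bar{\m})$ is local-Cohen-Macaulay, the definition gives $\opn{seq.depth}_A(A) = \dim(\mrm{H}^0(A))$. Combining these,
\[
\dim(\mrm{H}^0(A)) \le \dim(\mrm{H}^0(A)/\bar{\p}),
\]
and since the reverse inequality $\dim(\mrm{H}^0(A)/\bar{\p}) \le \dim(\mrm{H}^0(A))$ is automatic, we obtain equality. As $\bar{\p}$ was an arbitrary minimal prime, $\mrm{H}^0(A)$ is equidimensional.

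There is no real obstacle here: the entire argument is a short deduction from the two preceding results, and the role of each hypothesis is transparent. The constant amplitude assumption is essential (and only used once) to feed the minimal prime $\bar{\p}$ into the support hypothesis of Theorem \ref{thm:depth-bound}; Example \ref{exa:seq-depth} already indicated that without it one loses control of sequential depth at primes outside $\opn{Supp}(\mrm{H}^{\inf(A)}(A))$, which is exactly the failure mode that would break equidimensionality.
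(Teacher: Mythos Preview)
Your proof is correct and follows essentially the same route as the paper: use Proposition \ref{prop:height-of-ass} to see that each minimal prime $\bar{\p}$ lies in $\opn{Ass}_A(A)$, invoke the constant amplitude hypothesis to place $\bar{\p}$ in $\opn{Supp}(\mrm{H}^{\inf(A)}(A))$, apply Theorem \ref{thm:depth-bound} to get $\opn{seq.depth}_A(A) \le \dim(\mrm{H}^0(A)/\bar{\p})$, and conclude via the local-Cohen-Macaulay equality $\opn{seq.depth}_A(A) = \dim(\mrm{H}^0(A))$.
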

\begin{proof}
Since $\mrm{H}^0(A)$ is local,
it is enough to show that any prime ideal $\bar{\p} \in \opn{Spec}(\mrm{H}^0(A))$
with $\opn{ht}(\bar{\p}) = 0$ satisfies $\dim(\mrm{H}^0(A)/\bar{\p}) = \dim(\mrm{H}^0(A))$.
By Proposition \ref{prop:height-of-ass},
any such $\bar{\p}$ is an associated prime of $A$,
and by assumption, $\bar{\p} \in \opn{Supp}(\mrm{H}^{\inf(A)}(A))$.
Hence, by Theorem \ref{thm:depth-bound} applied to $A$,
\[
\opn{seq.depth}_A(A) \le \dim(\mrm{H}^0(A)/\bar{\p})
\]
Since $A$ is Cohen-Macaulay, 
\[
\opn{seq.depth}_A(A) = \dim(\mrm{H}^0(A)),
\]
which shows that
\[
\dim(\mrm{H}^0(A)/\bar{\p}) = \dim(\mrm{H}^0(A)).
\]
\end{proof}

\begin{lem}\label{lem:seq-zero-in-ass}
Let $(A,\bar{\m})$ be a local noetherian DG-ring,
let $M \in \cat{D}^{\mrm{b}}_{\mrm{f}}(A)$,
and let $\bar{I} \subseteq \mrm{H}^0(A)$ be an ideal.
If $\opn{seq.depth}(\bar{I},M) = 0$,
then there exists a prime ideal $\bar{\p} \in \opn{Ass}_A(M)$
such that $\bar{I} \subseteq \bar{\p}$.
\end{lem}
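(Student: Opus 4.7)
The plan is to reduce this DG statement to the classical fact that the zero-divisors on a finitely generated module over a noetherian ring form the union of its associated primes, applied to the bottom cohomology module $N := \mrm{H}^{\inf(M)}(M)$. First I would unpack the hypothesis: by the paper's recalled definition of $M$-regularity, an element $\bar{x} \in \bar{\m}$ is $M$-regular precisely when it is $N$-regular. The assumption $\opn{seq.depth}_A(\bar{I},M) = 0$ therefore says that no element of $\bar{I}$ is $N$-regular, i.e., every element of $\bar{I}$ is a zero-divisor on $N$ (here one may assume $\bar{I} \subseteq \bar{\m}$, as $\bar{I}$ must be proper for the conclusion even to make sense).

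Since $M \in \cat{D}^{\mrm{b}}_{\mrm{f}}(A)$ and $A$ is noetherian, $N$ is a finitely generated module over the noetherian ring $\mrm{H}^0(A)$. The classical result then gives a finite decomposition of the zero-divisor set,
\[
\bar{I} \subseteq \bigcup_{\bar{\q} \in \opn{Ass}_{\mrm{H}^0(A)}(N)} \bar{\q},
\]
and since this is a finite union of prime ideals, prime avoidance yields some $\bar{\q} \in \opn{Ass}_{\mrm{H}^0(A)}(N)$ with $\bar{I} \subseteq \bar{\q}$.

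The main step — and the one I expect to be the principal obstacle — is upgrading this classical associated prime of the module $N$ to a DG associated prime of $M$ in the sense of the preliminaries, i.e., verifying that $\depth_{A_{\bar{\q}}}(M_{\bar{\q}}) = \inf(M_{\bar{\q}})$. The condition $\bar{\q} \in \opn{Ass}_{\mrm{H}^0(A)}(N)$ forces $N_{\bar{\q}} \ne 0$, so $\inf(M_{\bar{\q}}) = \inf(M)$ and $\mrm{H}^{\inf(M)}(M_{\bar{\q}}) = N_{\bar{\q}}$. Mimicking the adjunction calculation used inside the proof of Theorem \ref{thm:depth-bound}, I would rewrite
\[
\mrm{R}\opn{Hom}_{A_{\bar{\q}}}(\mrm{H}^0(A_{\bar{\q}})/\bar{\q}\mrm{H}^0(A_{\bar{\q}}),\, M_{\bar{\q}}) \cong \mrm{R}\opn{Hom}_{\mrm{H}^0(A_{\bar{\q}})}\!\bigl(\mrm{H}^0(A_{\bar{\q}})/\bar{\q}\mrm{H}^0(A_{\bar{\q}}),\, \mrm{R}\opn{Hom}_{A_{\bar{\q}}}(\mrm{H}^0(A_{\bar{\q}}), M_{\bar{\q}})\bigr).
\]
By \cite[Proposition 3.3]{ShInj}, the inner $\mrm{R}\opn{Hom}$ has infimum $\inf(M)$ with bottom cohomology $N_{\bar{\q}}$. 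The outer $\mrm{R}\opn{Hom}$ over the classical ring $\mrm{H}^0(A_{\bar{\q}})$ cannot lower this infimum, since its first argument is a module concentrated in degree $0$; and its cohomology in degree $\inf(M)$ equals $\opn{Hom}_{\mrm{H}^0(A_{\bar{\q}})}(\mrm{H}^0(A_{\bar{\q}})/\bar{\q}\mrm{H}^0(A_{\bar{\q}}),\, N_{\bar{\q}})$, which is nonzero precisely because $\bar{\q}\mrm{H}^0(A_{\bar{\q}})$ is a classical associated prime of $N_{\bar{\q}}$. This forces $\depth_{A_{\bar{\q}}}(M_{\bar{\q}}) = \inf(M) = \inf(M_{\bar{\q}})$, placing $\bar{\q}$ in $\opn{Ass}_A(M)$ and completing the argument.
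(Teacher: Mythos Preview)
Your argument is correct, but it takes a genuinely different route from the paper. The paper works entirely within the DG theory: it cites \cite[Proposition 5.9]{ShCM} for the finiteness of $\opn{Ass}_A(M)$, applies prime avoidance directly to this finite set of DG associated primes, and then invokes \cite[Proposition 5.13]{ShCM} (which says that an element avoiding every DG associated prime of $M$ is $M$-regular) to obtain a contradiction. You instead descend to the classical associated primes of the $\mrm{H}^0(A)$-module $N=\mrm{H}^{\inf(M)}(M)$, use prime avoidance there, and then prove by hand the inclusion $\opn{Ass}_{\mrm{H}^0(A)}(N)\subseteq \opn{Ass}_A(M)$ via the adjunction/\cite[Proposition 3.3]{ShInj} computation. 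Your approach is more self-contained, effectively reproving the relevant part of \cite[Proposition 5.13]{ShCM} in this special situation, while the paper's proof is shorter precisely because it outsources that step. Both arguments hinge on the same prime-avoidance idea; the difference is only whether one applies it to the DG or to the classical set of associated primes.
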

\begin{proof}
According to \cite[Proposition 5.9]{ShCM},
the set $\opn{Ass}_A(M)$ is finite.
Hence, by the prime avoidance lemma,
if for all $\bar{\p} \in \opn{Ass}_A(M)$ we have that $\bar{I} \nsubseteq \bar{\p}$,
then there exists $\bar{x} \in \bar{I}$ such that for all $\bar{\p} \in \opn{Ass}_A(M)$,
$\bar{x} \notin \bar{\p}$.
Then, by \cite[Proposition 5.13]{ShCM},
the element $\bar{x}$ is $M$-regular,
contradicting the fact that $\opn{seq.depth}(\bar{I},M) = 0$.
\end{proof}

\begin{lem}\label{lem:strongRegular}
Let $(A,\bar{\m})$ be a Cohen-Macaulay local DG-ring with constant amplitude.
Let $\bar{I} \subseteq \mrm{H}^0(A)$ be an ideal such that $\opn{seq.depth}(\bar{I},A) > 0$.
Then there exists an element $\bar{x} \in \bar{I}$
such that $\bar{x}$ is $A$-regular,
and moreover, $\dim(\mrm{H}^0(K(A;\bar{x}))) = \dim(\mrm{H}^0(A))-1$.
\end{lem}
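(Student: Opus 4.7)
The plan is to combine the structural results on associated primes and equidimensionality for Cohen-Macaulay DG-rings with the classical prime avoidance argument. By Proposition \ref{prop:height-of-ass}, $\opn{Ass}(A)$ equals the set of minimal primes of $\mrm{H}^0(A)$, and by Corollary \ref{cor:equid}, $\mrm{H}^0(A)$ is equidimensional of dimension $d := \dim(\mrm{H}^0(A))$. So it suffices to produce $\bar{x} \in \bar{I}$ which avoids every minimal prime of $\mrm{H}^0(A)$: such an $\bar{x}$ will be $A$-regular by \cite[Proposition 5.13]{ShCM}, and the equidimensionality combined with Krull's principal ideal theorem will force $\dim(\mrm{H}^0(A)/(\bar{x})) = d - 1$, which equals $\dim(\mrm{H}^0(K(A;\bar{x})))$.

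The first step is to verify that $\bar{I}$ is contained in no minimal prime of $\mrm{H}^0(A)$. Let $\bar{\p}$ be such a minimal prime. The constant amplitude hypothesis gives $\bar{\p} \in \opn{Supp}(\mrm{H}^{\inf(A)}(A))$, so $\bar{\p}$ contains some associated prime of $\mrm{H}^{\inf(A)}(A)$; since $\bar{\p}$ is already minimal in $\mrm{H}^0(A)$, it must itself be an associated prime of the $\mrm{H}^0(A)$-module $\mrm{H}^{\inf(A)}(A)$. Consequently every element of $\bar{\p}$ is a zero-divisor on $\mrm{H}^{\inf(A)}(A)$, hence fails to be $A$-regular. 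If $\bar{I} \subseteq \bar{\p}$ then no element of $\bar{I}$ is $A$-regular, contradicting $\opn{seq.depth}_A(\bar{I},A) > 0$.

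Since the Noetherian ring $\mrm{H}^0(A)$ has only finitely many minimal primes, prime avoidance produces $\bar{x} \in \bar{I}$ lying outside every minimal prime of $\mrm{H}^0(A)$. By \cite[Proposition 5.13]{ShCM} and Proposition \ref{prop:height-of-ass}, such $\bar{x}$ is $A$-regular. For the dimension statement, equidimensionality of $\mrm{H}^0(A)$ means $\dim(\mrm{H}^0(A)/\bar{\p}) = d$ for every minimal prime $\bar{\p}$. The standard dimension inequality $\dim(R/(x)) \ge \dim R - 1$ for a Noetherian local ring together with the fact that $\dim(R/(x)) = \dim R$ only when $x$ lies in a minimal prime $\bar{\p}$ with $\dim(R/\bar{\p}) = \dim R$ then forces $\dim(\mrm{H}^0(A)/(\bar{x})) = d - 1$. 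Since $\mrm{H}^0(K(A;\bar{x})) = \mrm{H}^0(A)/(\bar{x})$, this concludes the proof.

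The crux of the argument is the first step, where the constant amplitude hypothesis is essential: without it, a minimal prime of $\mrm{H}^0(A)$ need not be associated to $\mrm{H}^{\inf(A)}(A)$, and the phenomenon exhibited in Example \ref{exa:seq-depth}, where a prime outside $\opn{Supp}(\mrm{H}^{\inf(A)}(A))$ can support an $A$-regular element while lying in none of the "right" primes, would invalidate the entire approach.
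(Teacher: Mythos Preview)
Your proof is correct and follows the same overall strategy as the paper: show that $\bar{I}$ is not contained in any minimal prime of $\mrm{H}^0(A)$, apply prime avoidance to produce $\bar{x}$, and then verify regularity and the dimension drop. The differences are in how two sub-steps are justified. For the claim that $\bar{I}$ avoids minimal primes, the paper uses the DG-depth machinery (Propositions~\ref{prop:depthPIn} and~\ref{prop:depth-formula}) to show that $\opn{seq.depth}_A(\bar{\p},A)=0$ whenever $\bar{\p}$ has height~$0$, whereas you argue more directly that a minimal prime in $\opn{Supp}(\mrm{H}^{\inf(A)}(A))$ is an associated prime of the module $\mrm{H}^{\inf(A)}(A)$, so its elements are zero-divisors. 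For the dimension drop, the paper invokes \cite[Proposition~5.16]{ShCM} (via the set $W_0^A(A)$), while you use Corollary~\ref{cor:equid} together with the elementary fact that in a noetherian local ring $\dim(R/(x))=\dim R$ forces $x$ into some minimal prime. Both routes are valid; yours is slightly more self-contained, while the paper's stays within the DG formalism already set up.
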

\begin{proof}
First, we claim that $\opn{ht}(\bar{I}) > 0$.
If that is not the case,
then there is some $\bar{\p} \in \opn{Spec}(\mrm{H}^0(A))$,
such that $\opn{ht}(\bar{\p}) = 0$ and $\bar{I} \subseteq \bar{\p}$.
By Proposition \ref{prop:height-of-ass},
$\bar{\p} \in \opn{Ass}(A)$.
Hence, using Proposition \ref{prop:depthPIn} we get:
\[
\opn{depth}_A(\bar{\p},A)\le \opn{depth}_{A_{\bar{\p}}}(A_{\bar{\p}}) = \inf(A_{\bar{\p}}) = \inf(A),
\]
so from Proposition \ref{prop:depth-formula} we get that
$\opn{seq.depth}_A(\bar{\p},A) = 0$.
This means that for any $\bar{x} \in \bar{\p}$, 
we have that $\bar{x}$ is not $A$-regular,
contradicting the assumption that $\opn{seq.depth}(\bar{I},A) > 0$.
Thus, $\opn{ht}(\bar{I}) > 0$,
which implies that $\bar{I}$ is not contained in any minimal prime ideal of $\mrm{H}^0(A)$.
By Proposition \ref{prop:height-of-ass},
we see that $\bar{I}$ is not contained in $\bar{\p}$ for any $\bar{\p} \in \opn{Ass}_A(A)$,
and for any $\bar{\p} \in W_0^A(A)$.
By the prime avoidance lemma, 
we can find $\bar{x} \in \bar{I}$,
such that $\bar{x} \notin \bar{\p}$ for any minimal prime $\bar{\p}$ of $\mrm{H}^0(A)$.
It follows from \cite[Proposition 5.13]{ShCM} that $\bar{x}$ is $A$-regular,
and from \cite[Proposition 5.16]{ShCM} that $\dim(\mrm{H}^0(K(A;\bar{x}))) = \dim(\mrm{H}^0(A))-1$.
\end{proof}

\begin{lem}\label{lem:supportKoszul}
Let $(A,\bar{\m})$ be a local noetherian DG-ring with bounded cohomology and constant amplitude.
Let $\bar{x} \in \bar{\m}$ be an $A$-regular element,
and let $B = K(A;\bar{x})$.
Then $B$ also has constant amplitude.
\end{lem}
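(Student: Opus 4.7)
The plan is to analyze $\mrm{H}^{\inf(B)}(B)$ directly via the Koszul distinguished triangle and reduce the constant-amplitude assertion for $B$ to Nakayama's lemma applied to the finitely generated $\mrm{H}^0(A)$-module $\mrm{H}^{\inf(A)}(A)$, making essential use of the constant-amplitude hypothesis on $A$. Since $\mrm{H}^0(B) \cong \mrm{H}^0(A)/(\bar{x})$, the surjection $\mrm{H}^0(A) \surj \mrm{H}^0(B)$ identifies $\opn{Spec}(\mrm{H}^0(B))$ with $V(\bar{x}) \subseteq \opn{Spec}(\mrm{H}^0(A))$, and the localization of any $\mrm{H}^0(B)$-module at a prime $\bar{\q}$ of $\mrm{H}^0(B)$ agrees with its localization at the preimage $\bar{\p} \in V(\bar{x})$. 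Thus the conclusion reduces to proving $\opn{Supp}_{\mrm{H}^0(A)}(\mrm{H}^{\inf(B)}(B)) = V(\bar{x})$.

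Next I would compute $\mrm{H}^{\inf(B)}(B)$ from the distinguished triangle $A \xrightarrow{\bar{x}} A \to B \xrightarrow{+1}$. Setting $s := \inf(A)$, the associated long exact sequence in cohomology yields, for each $n \in \mathbb{Z}$, a short exact sequence
\[
0 \to \mrm{H}^n(A)/\bar{x}\mrm{H}^n(A) \to \mrm{H}^n(B) \to \ker\!\left(\bar{x}\colon \mrm{H}^{n+1}(A) \to \mrm{H}^{n+1}(A)\right) \to 0.
\]
The $A$-regularity of $\bar{x}$ kills the kernel term appearing when $n = s-1$, so $\mrm{H}^{s-1}(B) = 0$; the sequence at $n = s$ then produces an injection
\[
\mrm{H}^s(A)/\bar{x}\mrm{H}^s(A) \hookrightarrow \mrm{H}^s(B).
\]
Because $\mrm{H}^s(A)$ is a nonzero finitely generated $\mrm{H}^0(A)$-module and $\bar{x} \in \bar{\m}$, Nakayama makes the left-hand side nonzero, whence $\inf(B) = s = \inf(A)$.

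Finally I would establish the support equality. One containment is automatic: $\mrm{H}^s(B)$ is an $\mrm{H}^0(B)$-module, so it is annihilated by $\bar{x}$, giving $\opn{Supp}_{\mrm{H}^0(A)}(\mrm{H}^s(B)) \subseteq V(\bar{x})$. Conversely, given $\bar{\p} \in V(\bar{x})$, I would localize the injection above at $\bar{\p}$. The constant-amplitude hypothesis on $A$ guarantees $\mrm{H}^s(A)_{\bar{\p}} \ne 0$, and since this is a finitely generated module over the local ring $\mrm{H}^0(A)_{\bar{\p}}$ with $\bar{x}$ lying in its maximal ideal, Nakayama yields $\mrm{H}^s(A)_{\bar{\p}}/\bar{x}\mrm{H}^s(A)_{\bar{\p}} \ne 0$, forcing $\mrm{H}^s(B)_{\bar{\p}} \ne 0$. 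I do not anticipate a substantive obstacle; the only point requiring care is isolating the injection $\mrm{H}^s(A)/\bar{x}\mrm{H}^s(A) \hookrightarrow \mrm{H}^s(B)$ cleanly from the long exact sequence, so that the constant-amplitude input for $A$ can be transferred through to $B$.
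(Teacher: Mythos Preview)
Your argument is correct and follows essentially the same route as the paper: both proofs use the long exact sequence of the Koszul triangle to extract the injection $\mrm{H}^s(A)/\bar{x}\mrm{H}^s(A) \hookrightarrow \mrm{H}^s(B)$ and then apply Nakayama at each $\bar{\p} \in V(\bar{x})$ together with the constant-amplitude hypothesis on $A$. The only difference is cosmetic: the paper cites \cite[Lemma 3.13]{Mi} for $\inf(B)=\inf(A)$, whereas you derive it directly from the same exact sequence.
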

\begin{proof}
Let $n = \inf(A)$. 
By \cite[Lemma 2.13]{Mi}, 
we have that $\inf(B) = n$.
We know that $\mrm{H}^0(B) = \mrm{H}^0(A)/\bar{x}$.
Given $\bar{\q} \in \opn{Spec}(\mrm{H}^0(B))$,
let $\bar{\p}$ be its preimage in $\opn{Spec}(\mrm{H}^0(A))$ along the surjection $\mrm{H}^0(A) \to \mrm{H}^0(A)/\bar{x}$.
Then we have that $\bar{x} \in \bar{\p}$.
Since the sequence
\[
0 \to \mrm{H}^n(A) \xrightarrow{\cdot \bar{x}} \mrm{H}^n(A) \to \mrm{H}^n(B)
\]
is exact, localizing it at $\bar{\p}$ we obtain an exact sequence
\[
0 \to \mrm{H}^n(A)_{\bar{\p}} \xrightarrow{\cdot \frac{\bar{x}}{1}} \mrm{H}^n(A)_{\bar{\p}} \to \mrm{H}^n(B)_{\bar{\p}}.
\]
The fact that $\bar{x} \in \bar{\p}$ implies that $0\ne \frac{\bar{x}}{1} \in \bar{\p}A_{\bar{\p}}$,
so Nakayama's lemma implies that  $\mrm{H}^n(B)_{\bar{\p}} \ne 0$.
The result then follows from the fact that $B_{\bar{\p}} \cong B_{\bar{\q}}$. 
\end{proof}

\begin{prop}\label{prop:assNoReg}
Let $(A,\bar{\m})$ be a local noetherian DG-ring with bounded cohomology and constant amplitude.
Let $\bar{x} \in \bar{\m}$ be an $A$-regular element.
Then $\bar{x}$ is not contained in any associated prime of $A$.
\end{prop}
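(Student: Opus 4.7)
The plan is to argue by contradiction. Suppose there exists $\bar{\p} \in \opn{Ass}_A(A)$ with $\bar{x} \in \bar{\p}$. I would then localize at $\bar{\p}$ and derive a contradiction between the vanishing of sequential depth at an associated prime and the surviving regularity of $\bar{x}/1$, which is guaranteed by the constant amplitude assumption.

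First I would unpack what $\bar{\p} \in \opn{Ass}_A(A)$ means: by definition this gives $\depth_{A_{\bar{\p}}}(A_{\bar{\p}}) = \inf(A_{\bar{\p}})$. Applying Proposition \ref{prop:depth-formula} to the noetherian local DG-ring $(A_{\bar{\p}}, \bar{\p}\cdot \mrm{H}^0(A_{\bar{\p}}))$ with $M = A_{\bar{\p}}$ and $\bar{I}$ the maximal ideal of $\mrm{H}^0(A_{\bar{\p}})$, this translates into $\opn{seq.depth}_{A_{\bar{\p}}}(A_{\bar{\p}}) = 0$. In particular, no element of the maximal ideal of $\mrm{H}^0(A_{\bar{\p}})$ is $A_{\bar{\p}}$-regular.

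Next I would use the constant amplitude hypothesis to show that the image $\bar{x}/1$ in $\mrm{H}^0(A_{\bar{\p}})$ \emph{is} $A_{\bar{\p}}$-regular. Since $\opn{Supp}(\mrm{H}^{\inf(A)}(A)) = \opn{Spec}(\mrm{H}^0(A))$, we have $\mrm{H}^{\inf(A)}(A)_{\bar{\p}} \ne 0$, and since localization is exact we obtain $\inf(A_{\bar{\p}}) = \inf(A)$ together with
\[
\mrm{H}^{\inf(A_{\bar{\p}})}(A_{\bar{\p}}) \;\cong\; \mrm{H}^{\inf(A)}(A)_{\bar{\p}}.
\]
The regularity of $\bar{x}$ on $A$ says that multiplication by $\bar{x}$ is injective on $\mrm{H}^{\inf(A)}(A)$; localizing this injective map at $\bar{\p}$ (again using exactness of localization) shows that multiplication by $\bar{x}/1$ is injective on $\mrm{H}^{\inf(A_{\bar{\p}})}(A_{\bar{\p}})$, i.e.\ $\bar{x}/1$ is $A_{\bar{\p}}$-regular. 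But $\bar{x}/1$ lies in the maximal ideal of $\mrm{H}^0(A_{\bar{\p}})$ because $\bar{x} \in \bar{\p}$, which contradicts $\opn{seq.depth}_{A_{\bar{\p}}}(A_{\bar{\p}}) = 0$.

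There is no serious obstacle here; the only point that requires a bit of care is verifying that the constant amplitude hypothesis ensures both $\inf(A_{\bar{\p}}) = \inf(A)$ and that the bottom cohomology module of $A$ localizes to the bottom cohomology module of $A_{\bar{\p}}$, so that the notion of $A$-regularity really does pass to $A_{\bar{\p}}$-regularity after localization. Without constant amplitude, $\mrm{H}^{\inf(A)}(A)_{\bar{\p}}$ could vanish and the argument would collapse, which is precisely what happens in Example \ref{exa:seq-depth}.
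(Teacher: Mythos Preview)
Your argument is correct. It is very close in spirit to the paper's proof but differs in one step: the paper shows, for any $\bar{\p}$ containing $\bar{x}$, that $\opn{seq.depth}_A(\bar{\p},A) > 0$, converts this via Proposition~\ref{prop:depth-formula} to $\opn{depth}_A(\bar{\p},A) > \inf(A)$, and then invokes Proposition~\ref{prop:depthPIn} to pass to $\depth_{A_{\bar{\p}}}(A_{\bar{\p}}) > \inf(A) = \inf(A_{\bar{\p}})$, concluding $\bar{\p} \notin \opn{Ass}(A)$. You instead localize the injective multiplication map $\bar{x}\colon \mrm{H}^{\inf(A)}(A) \to \mrm{H}^{\inf(A)}(A)$ directly and use constant amplitude to identify $\mrm{H}^{\inf(A)}(A)_{\bar{\p}}$ with $\mrm{H}^{\inf(A_{\bar{\p}})}(A_{\bar{\p}})$, thereby producing an $A_{\bar{\p}}$-regular element in the maximal ideal and contradicting $\opn{seq.depth}_{A_{\bar{\p}}}(A_{\bar{\p}}) = 0$. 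Your route is slightly more elementary in that it avoids Proposition~\ref{prop:depthPIn}; the paper's route has the mild advantage that the only place constant amplitude enters is the single equality $\inf(A) = \inf(A_{\bar{\p}})$ at the very end.
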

\begin{proof}
Let $\bar{\p} \in \opn{Spec}(\mrm{H}^0(A))$,
and suppose that $\bar{x} \in \bar{\p}$.
Then $\opn{seq.depth}(\bar{\p},A) > 0$.
Hence,
\[
\depth_{A_{\bar{\p}}}(A_{\bar{\p}}) \ge \opn{depth}(\bar{\p},A) = \opn{seq.depth}(\bar{\p},A) + \inf(A) > \inf(A) = \inf(A_{\bar{\p}}).
\]
Which implies that $\bar{\p} \notin \opn{Ass}(A)$.
\end{proof}

The next result is a very special case of the main result of this paper.
We need to prove this special case here, as it is required in the sequel.

\begin{prop}\label{prop:koszulCM}
Let $(A,\bar{\m})$ be a Cohen-Macaulay local DG-ring with constant amplitude.
Let $\bar{x} \in \bar{\m}$ be an $A$-regular element, such that
\[
\dim(\mrm{H}^0(K(A;\bar{x}))) = \dim(\mrm{H}^0(A))-1.
\]
Then the local DG-ring $K = K(A;\bar{x})$ is also Cohen-Macaulay.
\end{prop}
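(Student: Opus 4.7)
The plan is to establish the Cohen-Macaulay property of $K = K(A;\bar{x})$ in two steps: first, verify that $K$ itself is local-Cohen-Macaulay; second, upgrade this to Cohen-Macaulayness by localizing to arbitrary primes of $\mrm{H}^0(K)$.

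For the first step, I would apply $\mrm{R}\opn{Hom}_A(\mrm{H}^0(A)/\bar{\m}, -)$ to the distinguished triangle $A \xrightarrow{\bar{x}} A \to K \to A[1]$. Since $\bar{x}$ acts as zero on the residue field, the associated long exact sequence splits into short exact sequences, and an inspection of the smallest nonvanishing cohomology yields $\opn{depth}_A(\bar{\m}, K) = \opn{depth}_A(\bar{\m}, A) - 1$. The change-of-rings adjunction $\mrm{R}\opn{Hom}_A(\mrm{H}^0(A)/\bar{\m}, K) \cong \mrm{R}\opn{Hom}_K(\mrm{H}^0(A)/\bar{\m} \otimes^{\mrm{L}}_A K, K)$, together with the splitting $\mrm{H}^0(A)/\bar{\m} \otimes^{\mrm{L}}_A K \cong \mrm{H}^0(A)/\bar{\m} \oplus \mrm{H}^0(A)/\bar{\m}[1]$ (since $\bar{x}$ kills the residue field), identifies $\opn{depth}_A(\bar{\m}, K)$ with $\opn{depth}_K(K)$. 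Combining with $\inf(K) = \inf(A)$ from \cite[Lemma 3.13]{Mi} and Proposition \ref{prop:depth-formula}, I would obtain
\[
\opn{seq.depth}_K(K) = \opn{seq.depth}_A(A) - 1 = \dim(\mrm{H}^0(A)) - 1 = \dim(\mrm{H}^0(K)),
\]
so $K$ is local-Cohen-Macaulay.

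For the second step, fix $\bar{\p} \in \opn{Spec}(\mrm{H}^0(K))$ and let $\bar{\q}$ be its preimage in $\mrm{H}^0(A)$. By Proposition \ref{prop:kosLocal}, $K_{\bar{\p}} \cong K(A_{\bar{\q}}; \bar{x}/1)$, so it suffices to re-run the first step for $A_{\bar{\q}}$ and $\bar{x}/1$. The DG-ring $A_{\bar{\q}}$ is local-Cohen-Macaulay (since $A$ is Cohen-Macaulay), inherits constant amplitude from $A$ because $\opn{Supp}(\mrm{H}^{\inf(A)}(A)) = \opn{Spec}(\mrm{H}^0(A))$ remains full after localizing to $\opn{Spec}(\mrm{H}^0(A)_{\bar{\q}})$, and $\bar{x}/1$ is $A_{\bar{\q}}$-regular by localizing the injectivity of multiplication by $\bar{x}$ on $\mrm{H}^{\inf(A)}(A)$. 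The dimension drop $\dim(\mrm{H}^0(K_{\bar{\p}})) = \dim(\mrm{H}^0(A_{\bar{\q}})) - 1$ follows from \cite[Proposition 5.16]{ShCM}, once Propositions \ref{prop:assNoReg} and \ref{prop:height-of-ass} are invoked to show that $\bar{x}$ avoids every minimal prime of $\mrm{H}^0(A)$, and hence $\bar{x}/1$ avoids every minimal prime of $\mrm{H}^0(A_{\bar{\q}})$.

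The chief technical hurdle is the first step, specifically the depth transfer between $A$ and $K$: the splitting of $\mrm{H}^0(A)/\bar{\m} \otimes^{\mrm{L}}_A K$ is essential to ensure that the infimum of the Hom-complex computed over $K$ genuinely agrees with that computed over $A$, without any parasitic shift. Once that is in place, stage two reduces to checking that the Cohen-Macaulay structure together with constant amplitude forces $\bar{x}$, and hence $\bar{x}/1$, to avoid the minimal primes that would otherwise obstruct the dimension drop required at each localization.
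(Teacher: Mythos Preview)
Your proposal is correct and follows essentially the same strategy as the paper's proof: localize via Proposition~\ref{prop:kosLocal}, use regularity of $\bar{x}/1$ to get the depth drop $\opn{seq.depth}_{K_{\bar{\p}}}(K_{\bar{\p}}) = \opn{seq.depth}_{A_{\bar{\q}}}(A_{\bar{\q}}) - 1$, and use Propositions~\ref{prop:height-of-ass} and~\ref{prop:assNoReg} to force the matching dimension drop. The only cosmetic differences are that the paper states the depth drop by appeal to the recursive definition of $\opn{seq.depth}$ and \cite[Lemma~3.9]{Mi} rather than your explicit triangle-plus-adjunction computation, and it treats the height-zero case separately (a case your argument shows cannot actually occur).
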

\begin{proof}
Given $\bar{\p} \in \opn{Spec}(\mrm{H}^0(K)) = \opn{Spec}(\mrm{H}^0(A)/\bar{x})$,
we must show that $K_{\bar{\p}}$ is local-Cohen-Macaulay.
If $K_{\bar{\p}} \cong 0$, there is nothing to prove.
Otherwise, let $\bar{\q}$ be the inverse image of $\bar{\p}$ under the map $\mrm{H}^0(A) \to \mrm{H}^0(A)/(\bar{x})$.
Then $\bar{x} \in \bar{\q}$.
By Proposition \ref{prop:kosLocal},
we know that
\[
K_{\bar{\p}} \cong K(A_{\bar{\q}};\bar{x}/1).
\]
If $\opn{ht}(\bar{\q}) = 0$,
so that $\dim(\mrm{H}^0(A_{\bar{\q}})) =0$,
then $\dim(\mrm{H}^0(K_{\bar{\p}})) =0$,
so by \cite[Proposition 4.8]{ShCM}, 
$K_{\bar{\p}}$ is local-Cohen-Macaulay.
Suppose $\opn{ht}(\bar{\q}) > 0$.
Since $\bar{x}$ is $\mrm{H}^{\inf(A)}(A)$-regular,
it follows that $\bar{x}/1$ is $\mrm{H}^{\inf(A_{\bar{\q})}}(A_{\bar{\q}})$-regular,
so that $\bar{x}/1$ is $A_{\bar{\q}}$-regular.
Hence,
\[
\opn{seq.depth}_{K_{\bar{\p}}}(K_{\bar{\p}}) = \opn{seq.depth}_{A_{\bar{\q}}}(A_{\bar{\q}}) - 1 = \dim(\mrm{H}^0(A_{\bar{\q}})) - 1.
\]
It is thus enough to show that
\[
\dim(\mrm{H}^0(K_{\bar{\p}})) =  \dim(\mrm{H}^0(A_{\bar{\q}})/(\bar{x})) = \dim(\mrm{H}^0(A_{\bar{\q}}))-1.
\]
If that is not the case,
so that $\dim(\mrm{H}^0(A_{\bar{\q}})/(\bar{x})) = \dim(\mrm{H}^0(A_{\bar{\q}}))$,
then $\bar{x}$ must be contained in some minimal prime ideal $\bar{\n}$ contained in $\bar{\q}$.
However, since $A_{\bar{\q}}$ is Cohen-Macaulay,
by Proposition \ref{prop:height-of-ass},
any such $\bar{\n}$ is an associated prime of $A_{\bar{\q}}$,
and by Proposition \ref{prop:assNoReg},
$\bar{x}$ is not contained in any such $\bar{\n}$,
which shows that $K_{\bar{\p}}$ is local-Cohen-Macaulay.
\end{proof}

Here is the main result of this section.

\begin{thm}\label{thm:seq-depth}
Let $(A,\bar{\m})$ be a Cohen-Macaulay local DG-ring with constant amplitude.
Then for any ideal $\bar{I} \subseteq \mrm{H}^0(A)$,
there is an equality
\[
\opn{seq.depth}_A(\bar{I},A) = \dim(\mrm{H}^0(A)) - \dim(\mrm{H}^0(A)/\bar{I}).
\]
\end{thm}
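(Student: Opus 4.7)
The plan is to proceed by induction on $n := \opn{seq.depth}_A(\bar{I},A)$, assuming throughout that $\bar{I}$ is a proper ideal (otherwise both sides are infinite/undefined). The statement to be inductively proved is actually: for \emph{every} Cohen-Macaulay local DG-ring $(A,\bar{\m})$ with constant amplitude, and every proper ideal $\bar{I}$, the stated equality holds. This setup is what permits reducing modulo a regular element.

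For the base case $n=0$, Lemma \ref{lem:seq-zero-in-ass} produces some $\bar{\p}\in\opn{Ass}_A(A)$ with $\bar{I}\subseteq\bar{\p}$. Proposition \ref{prop:height-of-ass} forces $\opn{ht}(\bar{\p})=0$, i.e. $\bar{\p}$ is a minimal prime of $\mrm{H}^0(A)$. Since $A$ has constant amplitude, Corollary \ref{cor:equid} yields $\dim(\mrm{H}^0(A)/\bar{\p})=\dim(\mrm{H}^0(A))$, and hence
\[
\dim(\mrm{H}^0(A)/\bar{I}) \ge \dim(\mrm{H}^0(A)/\bar{\p}) = \dim(\mrm{H}^0(A)),
\]
giving equality, which matches the right-hand side being $0$.

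For the inductive step with $n>0$, Lemma \ref{lem:strongRegular} supplies an element $\bar{x}\in\bar{I}$ which is $A$-regular and satisfies $\dim(\mrm{H}^0(K(A;\bar{x})))=\dim(\mrm{H}^0(A))-1$. Set $B:=K(A;\bar{x})$ and let $\bar{J}\subseteq\mrm{H}^0(B)=\mrm{H}^0(A)/(\bar{x})$ denote the image of $\bar{I}$. Proposition \ref{prop:koszulCM} ensures $B$ is local-Cohen-Macaulay, and Lemma \ref{lem:supportKoszul} ensures $B$ has constant amplitude, so the inductive hypothesis applies to $(B,\bar{J})$. The key transfer formula needed is
\[
\opn{seq.depth}_B(\bar{J},B) = \opn{seq.depth}_A(\bar{I},A)-1,
\]
which follows by the same argument as in Proposition \ref{prop:depth-formula}: invoke \cite[Lemma 3.9]{Mi} (or rather, the isomorphism $\mrm{R}\opn{Hom}_B(\mrm{H}^0(B)/\bar{J},B[-1])\cong\mrm{R}\opn{Hom}_A(\mrm{H}^0(A)/\bar{I},A)$ exhibited there) to conclude $\opn{depth}_B(\bar{J},B)=\opn{depth}_A(\bar{I},A)-1$, and then use $\inf(B)=\inf(A)$ together with Proposition \ref{prop:depth-formula} on both sides.

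Combining the induction hypothesis with $\mrm{H}^0(B)/\bar{J}\cong\mrm{H}^0(A)/\bar{I}$ and $\dim(\mrm{H}^0(B))=\dim(\mrm{H}^0(A))-1$ then gives
\[
\opn{seq.depth}_A(\bar{I},A) = \opn{seq.depth}_B(\bar{J},B)+1 = \dim(\mrm{H}^0(A))-\dim(\mrm{H}^0(A)/\bar{I}),
\]
as desired. The main conceptual obstacle is really already resolved by the preparatory machinery of the section: producing a regular element that genuinely drops the Krull dimension of $\mrm{H}^0$ by one (Lemma \ref{lem:strongRegular}, which rests on both halves of Proposition \ref{prop:height-of-ass}, and which is the place where constant amplitude is indispensable) and verifying that the Cohen-Macaulay-with-constant-amplitude property is preserved after Koszul reduction. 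Once these are in hand, the induction is essentially the classical commutative-algebra proof transported through the DG-depth formula of Proposition \ref{prop:depth-formula}.
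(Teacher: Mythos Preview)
Your proof is correct and follows the same inductive argument as the paper, invoking the same auxiliary results in the same order. Two cosmetic remarks: Proposition \ref{prop:koszulCM} actually shows $B$ is Cohen-Macaulay (not merely local-Cohen-Macaulay), which is what your induction hypothesis requires; and your justification of $\opn{seq.depth}_B(\bar{J},B)=\opn{seq.depth}_A(\bar{I},A)-1$ via Proposition \ref{prop:depth-formula} and \cite[Lemma 3.9]{Mi} is more explicit than the paper's, which treats this as immediate from the recursive definition of regular sequences.
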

\begin{proof}
We prove this by induction on $\opn{seq.depth}_A(\bar{I},A)$.
If $\opn{seq.depth}(\bar{I},A) = 0$, 
then by Lemma \ref{lem:seq-zero-in-ass} there is some $\bar{\p} \in \opn{Ass}(A)$ such that $\bar{I} \subseteq \bar{\p}$.
In that case, by Proposition \ref{prop:height-of-ass} and Corollary \ref{cor:equid} we have
\[
\dim(\mrm{H}^0(A)/\bar{I}) \ge \dim(\mrm{H}^0(A)/\bar{\p}) = \dim(\mrm{H}^0(A)),
\]
so that 
\[
\dim(\mrm{H}^0(A)/\bar{I}) = \dim(\mrm{H}^0(A)),
\]
as needed in this case.
Assume now that $\opn{seq.depth}(\bar{I},A) > 0$.
By Lemma \ref{lem:strongRegular},
there is some $\bar{x} \in \bar{I}$ such that $\bar{x}$ is $A$-regular,
and $\dim(\mrm{H}^0(K(A;\bar{x}))) = \dim(\mrm{H}^0(A))-1$.
Let $B = K(A;\bar{x})$.
By Proposition \ref{prop:koszulCM},
$B$ is Cohen-Macaulay,
and by Lemma \ref{lem:supportKoszul}, $B$ has constant amplitude.
Let $\bar{J}$ be the image of $\bar{I}$ in $\mrm{H}^0(B) = \mrm{H}^0(A)/(\bar{x})$.
Then 
\[
\opn{seq.depth}_B(\bar{J},B) = \opn{seq.depth}_A(\bar{I},A) -1.
\]
By the induction hypothesis,
\[
\opn{seq.depth}_B(\bar{J},B) = \dim(\mrm{H}^0(B))-\dim(\mrm{H}^0(B)/\bar{J}).
\]
Hence,
\begin{gather*}
\opn{seq.depth}(\bar{I},A)  = \dim(\mrm{H}^0(B))-\dim(\mrm{H}^0(B)/\bar{J}) + 1 =\\
\dim(\mrm{H}^0(A))-1 - \dim(\mrm{H}^0(A)/\bar{I}) + 1 =\dim(\mrm{H}^0(A))-\dim(\mrm{H}^0(A)/\bar{I}).
\end{gather*}
\end{proof}

We finish this section with the next result which connects depth and the Koszul complex over commutative DG-rings:

\begin{prop}\label{prop:depthKoszul}
Let $A$ be a commutative noetherian DG-ring,
let $\bar{I} \subseteq \mrm{H}^0(A)$ be a proper ideal,
and let $\bar{a}_1,\dots,\bar{a}_n$ be a sequence of elements of $\mrm{H}^0(A)$ that generates $\bar{I}$.
Then for any $M \in \cat{D}^{+}(A)$,
there is an equality
\[
\opn{depth}_A(\bar{I},M) = \inf\left(M\otimes^{\mrm{L}}_A K(A;\bar{a}_1,\dots,\bar{a}_n)\right) + n.
\]
\end{prop}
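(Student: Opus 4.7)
The plan is to proceed by induction on $n$. For the base case $n=0$, the empty sequence generates $\bar{I}=0$, we have $\mrm{H}^0(A)/\bar{I}=\mrm{H}^0(A)$ and $K(A;\emptyset)=A$, so both sides of the claimed equality collapse to $\inf(M)$---the left-hand side by \cite[Proposition 3.3]{ShInj}, and the right-hand side tautologically.

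For the inductive step, I would peel off the last generator by setting $A':=K(A;\bar{a}_n)$. A quick check using the long exact sequence attached to the triangle $A\xrightarrow{\bar{a}_n}A\to A'\to A[1]$ shows that $A'$ is a commutative noetherian DG-ring. Writing $\bar{a}_i'\in\mrm{H}^0(A')$ for the image of $\bar{a}_i$ and $\bar{I}'=(\bar{a}_1',\dots,\bar{a}_{n-1}')$, one has $\mrm{H}^0(A')/\bar{I}'=\mrm{H}^0(A)/\bar{I}$, so $\bar{I}'$ is again proper. Propositions \ref{prop:KosDGcon} and \ref{prop:KoszulChange} combine to give
\[
K(A;\bar{a}_1,\dots,\bar{a}_n)\cong K(A;\bar{a}_1,\dots,\bar{a}_{n-1})\otimes^{\mrm{L}}_A A'\cong K(A';\bar{a}_1',\dots,\bar{a}_{n-1}'),
\]
and change-of-rings adjunction yields
\[
\mrm{R}\opn{Hom}_A(\mrm{H}^0(A)/\bar{I},M)\cong \mrm{R}\opn{Hom}_{A'}\bigl(\mrm{H}^0(A')/\bar{I}',\,\mrm{R}\opn{Hom}_A(A',M)\bigr),
\]
reducing the task to identifying $\mrm{R}\opn{Hom}_A(A',M)$ as a DG-module over $A'$.

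The decisive step is the isomorphism $\mrm{R}\opn{Hom}_A(A',M)\cong(M\otimes^{\mrm{L}}_A A')[-1]$ in $\cat{D}(A')$, which I would establish by combining Proposition \ref{prop:dualKosz} (giving $\mrm{R}\opn{Hom}_A(A',A)\cong A'[-1]$) with the tensor-evaluation isomorphism $\mrm{R}\opn{Hom}_A(A',A)\otimes^{\mrm{L}}_A M\iso \mrm{R}\opn{Hom}_A(A',M)$; the latter is valid because $A'$ is compact over $A$ by Proposition \ref{prop:compact}, via the same saturated subcategory argument as in the proof of Proposition \ref{prop:Tensor-eval-with-compact} combined with the observation that both sides commute with coproducts in $M$. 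Applying the induction hypothesis over $A'$ to the sequence $\bar{a}_1',\dots,\bar{a}_{n-1}'$ and the DG-module $(M\otimes^{\mrm{L}}_A A')[-1]\in\cat{D}^{+}(A')$ then produces
\begin{align*}
\inf\mrm{R}\opn{Hom}_A(\mrm{H}^0(A)/\bar{I},M)
&=\inf\bigl((M\otimes^{\mrm{L}}_A A')[-1]\otimes^{\mrm{L}}_{A'}K(A';\bar{a}_1',\dots,\bar{a}_{n-1}')\bigr)+(n-1)\\
&=\inf\bigl(M\otimes^{\mrm{L}}_A K(A;\bar{a}_1,\dots,\bar{a}_n)\bigr)+1+(n-1),
\end{align*}
which is the desired equality. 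The main obstacle is the passage from $\mrm{R}\opn{Hom}_A(A',M)$ to a shift of $M\otimes^{\mrm{L}}_A A'$ as DG-modules over $A'$; everything else is formal manipulation of adjunctions and shifts.
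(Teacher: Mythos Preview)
Your induction is correct and constitutes a genuinely different argument from the paper's. The paper does not induct on $n$; instead it pushes the computation down to the ordinary noetherian ring $\mrm{H}^0(A)$ by writing
\[
M\otimes^{\mrm{L}}_A K(A;\bar a_1,\dots,\bar a_n)\cong \mrm{R}\opn{Hom}_A(\mrm{H}^0(A),M)\otimes^{\mrm{L}}_{\mrm{H}^0(A)} K(\mrm{H}^0(A);\bar a_1,\dots,\bar a_n)
\]
(using \cite[Proposition~3.3]{ShInj}, tensor-evaluation, and Proposition~\ref{prop:KoszulChange}), and then quotes Foxby--Iyengar's Koszul depth formula \cite[Theorem~I]{FI} over $\mrm{H}^0(A)$, followed by a final adjunction. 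So the paper outsources the substantive step to the ring case, while your argument is self-contained in the DG setting and re-derives the $n=1$ step via $\mrm{R}\opn{Hom}_A(A',M)\cong (M\otimes^{\mrm{L}}_A A')[-1]$. Your approach has the advantage of not relying on \cite{FI}; the paper's approach is shorter because it does not need to set up an induction.

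One small remark on your ``decisive step'': the evaluation map you need,
\[
\mrm{R}\opn{Hom}_A(A',A)\otimes^{\mrm{L}}_A M \longrightarrow \mrm{R}\opn{Hom}_A(A',M),
\]
is not literally the map of Proposition~\ref{prop:Tensor-eval-with-compact} (there the compact object sits in the \emph{third} slot, not the first). The cleanest justification is to fix $M$ and vary the first argument over the thick subcategory generated by $A$; no coproduct argument in $M$ is needed. In fact the paper already invokes exactly this variant, as an isomorphism in $\cat{D}(A')$, in the proof of Theorem~\ref{thm:koszulCompletion}, citing \cite[Theorem~14.1.22]{YeBook}; you may simply cite that.
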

\begin{proof}
By \cite[Proposition 3.3]{ShInj} we have an equality
\[
\inf\left(M\otimes^{\mrm{L}}_A K(A;\bar{a}_1,\dots,\bar{a}_n)\right) =
\inf\left(\mrm{R}\opn{Hom}_A(\mrm{H}^0(A),M\otimes^{\mrm{L}}_A K(A;\bar{a}_1,\dots,\bar{a}_n))\right). 
\]
By \cite[Theorem 12.10.14]{YeBook} and adjunction,
there are isomorphisms
\begin{gather*}
\mrm{R}\opn{Hom}_A(\mrm{H}^0(A),M\otimes^{\mrm{L}}_A K(A;\bar{a}_1,\dots,\bar{a}_n)) \cong\\
\mrm{R}\opn{Hom}_A(\mrm{H}^0(A),M)\otimes^{\mrm{L}}_A K(A;\bar{a}_1,\dots,\bar{a}_n) \cong \\
\mrm{R}\opn{Hom}_A(\mrm{H}^0(A),M)\otimes^{\mrm{L}}_{\mrm{H}^0(A)} \mrm{H}^0(A) \otimes^{\mrm{L}}_A K(A;\bar{a}_1,\dots,\bar{a}_n).
\end{gather*}
Applying Proposition \ref{prop:KoszulChange} to the map $A \to \mrm{H}^0(A)$,
we know that
\[
\mrm{H}^0(A) \otimes^{\mrm{L}}_A K(A;\bar{a}_1,\dots,\bar{a}_n) \cong K(\mrm{H}^0(A);\bar{a}_1,\dots,\bar{a}_n).
\]
We deduce that
\begin{gather*}
\inf\left(M\otimes^{\mrm{L}}_A K(A;\bar{a}_1,\dots,\bar{a}_n)\right) =\\
\inf\left(\mrm{R}\opn{Hom}_A(\mrm{H}^0(A),M)\otimes^{\mrm{L}}_{\mrm{H}^0(A)} K(\mrm{H}^0(A);\bar{a}_1,\dots,\bar{a}_n) \right).
\end{gather*}
To compute the latter,
we may invoke \cite[Theorem I]{FI} over the noetherian ring $\mrm{H}^0(A)$,
and deduce that
\[
\inf\left(M\otimes^{\mrm{L}}_A K(A;\bar{a}_1,\dots,\bar{a}_n)\right) = \opn{depth}_{\mrm{H}^0(A)}\left(\bar{I},\mrm{R}\opn{Hom}_A(\mrm{H}^0(A),M)\right) -n
\]
Note that the formula here is slightly different than the one in \cite{FI},
because we are using cohomological grading.
By definition, 
we have that
\begin{gather*}
\opn{depth}_{\mrm{H}^0(A)}\left(\bar{I},\mrm{R}\opn{Hom}_A(\mrm{H}^0(A),M)\right) = \\
\inf\left(\mrm{R}\opn{Hom}_{\mrm{H}^0(A)}(\mrm{H}^0(A)/\bar{I},\mrm{R}\opn{Hom}_A(\mrm{H}^0(A),M))\right),
\end{gather*}
and by adjunction we have that
\[
\mrm{R}\opn{Hom}_{\mrm{H}^0(A)}\left(\mrm{H}^0(A)/\bar{I},\mrm{R}\opn{Hom}_A(\mrm{H}^0(A),M)\right) \cong
\mrm{R}\opn{Hom}_A(\mrm{H}^0(A)/\bar{I},M).
\]
Since by definition $\inf\left(\mrm{R}\opn{Hom}_A(\mrm{H}^0(A)/\bar{I},M)\right) = \opn{depth}_A(\bar{I},M)$,
we deduce the result.
\end{proof}

\section{Koszul complexes over Cohen-Macaulay DG-rings}\label{sec:KosCMDG}

In this section we prove the main result of this paper,
Theorem \ref{thm:main}.
Before that, we need the following lemma.

\begin{lem}\label{lem:supportCompletion}
Let $(A,\m)$ be a noetherian local ring,
and let $M$ be a finitely generated $A$-module.
Suppose that
\[
\opn{Supp}_A(M) = \opn{Spec}(A),
\]
and let $\widehat{A}$ and $\widehat{M}$ denote the $\m$-adic completions of $A$ and $M$.
Then
\[
\opn{Supp}_{\widehat{A}}(\widehat{M}) = \opn{Spec}(\widehat{A}).
\]
\end{lem}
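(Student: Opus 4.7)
The plan is to reduce the statement to a standard fact about annihilators and flat base change. Since $M$ is a finitely generated module over the noetherian ring $A$, the support of $M$ equals $V(\opn{Ann}_A(M))$, and the hypothesis $\opn{Supp}_A(M) = \opn{Spec}(A)$ translates into the purely ring-theoretic statement that $\opn{Ann}_A(M)$ is contained in every prime of $A$, equivalently contained in the nilradical $\sqrt{0_A}$.

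Next I would pass to the completion using flatness. Since $A \to \widehat{A}$ is flat and $M$ is finitely presented (being finitely generated over a noetherian ring), the formula
\[
\opn{Ann}_{\widehat{A}}(M \otimes_A \widehat{A}) = \opn{Ann}_A(M) \cdot \widehat{A}
\]
holds; one can derive it by tensoring the exact sequence $0 \to \opn{Ann}_A(M) \to A \to \opn{Hom}_A(M,M)$ with $\widehat{A}$ and using the identification $\opn{Hom}_A(M,M) \otimes_A \widehat{A} \cong \opn{Hom}_{\widehat{A}}(\widehat{M},\widehat{M})$ for finitely presented $M$. Again since $\widehat{M}$ is finitely generated over $\widehat{A}$, its support equals $V(\opn{Ann}_{\widehat{A}}(\widehat{M})) = V(\opn{Ann}_A(M)\cdot \widehat{A})$, which is the preimage of $\opn{Supp}_A(M) = \opn{Spec}(A)$ under the induced map $\opn{Spec}(\widehat{A}) \to \opn{Spec}(A)$, and hence is all of $\opn{Spec}(\widehat{A})$.

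There is no serious obstacle here, but the only point requiring a little care is the identity $\opn{Ann}_{\widehat{A}}(\widehat{M}) = \opn{Ann}_A(M)\cdot \widehat{A}$: this needs finite presentation of $M$, which is automatic over the noetherian ring $A$. If one prefers to avoid invoking this formula, an equivalent route is to argue that $\opn{Supp}_A(M) = \opn{Spec}(A)$ forces every minimal prime of $A$ to lie in $\opn{Supp}_A(M)$, then to use that every minimal prime of $\widehat{A}$ contracts to a minimal prime $\p$ of $A$ and that $\widehat{M}_{\widehat{\p}} \cong M_{\p} \otimes_{A_\p} \widehat{A}_{\widehat{\p}}$ is nonzero by faithful flatness of $A_\p \to \widehat{A}_{\widehat{\p}}$; but the annihilator approach is cleaner.
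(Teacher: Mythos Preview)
Your annihilator argument is correct. The key identity $\opn{Ann}_{\widehat{A}}(\widehat{M}) = \opn{Ann}_A(M)\cdot\widehat{A}$ in fact only needs $M$ finitely generated and $A\to\widehat{A}$ flat: with generators $m_1,\dots,m_r$ of $M$ one has the exact sequence $0\to\opn{Ann}_A(M)\to A\to M^r$, $a\mapsto(am_1,\dots,am_r)$, and tensoring with the flat $\widehat{A}$ gives the claim directly, so finite presentation is not even needed for that step.

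The paper takes the other route you sketch at the end, but for an arbitrary prime rather than a minimal one: given any $\q\in\opn{Spec}(\widehat{A})$ with contraction $\p\in\opn{Spec}(A)$, it writes $\widehat{M}_{\q}\cong M_{\p}\otimes_{A_{\p}}\widehat{A}_{\q}$ and uses faithful flatness of the local map $A_{\p}\to\widehat{A}_{\q}$ together with $M_{\p}\ne 0$. Your annihilator approach is slicker in that it reduces everything to one ideal-theoretic identity, and it makes transparent that the conclusion is really just ``the nilradical extends to a nilpotent ideal''. The paper's localization approach avoids invoking the base-change formula for annihilators and stays closer to the definition of support; it also generalizes verbatim if one replaces $\widehat{A}$ by any faithfully flat $A$-algebra. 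Either proof is perfectly adequate here.
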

\begin{proof}
Denote by $\tau:A \to \widehat{A}$ the completion map.
Given $\q \in \opn{Spec}(\widehat{A})$,
let $\p = \tau^{-1}(\q)$.
Since $M$ is finitely generated, 
we have that
\[
\widehat{M}_{\q} \cong (M\otimes_A \widehat{A}) \otimes_{\widehat{A}} \widehat{A}_{\q} \cong M\otimes_A \widehat{A}_{\q}.
\]
Since the map $A \to \widehat{A}_{\q}$ factors as $A \to A_{\p} \to \widehat{A}_{\q}$,
we have that
\[
M\otimes_A \widehat{A}_{\q} \cong M\otimes_A A_{\p}\otimes_{A_{\p}} \widehat{A}_{\q} \cong M_{\p}\otimes_{A_{\p}} \widehat{A}_{\q}.
\]
Since $M_{\p} \ne 0$, 
and as the map $A_{\p} \to \widehat{A}_{\q}$ is faithfully flat,
we deduce that $(\widehat{M})_{\q} \ne 0$.
\end{proof}

Here is the main result of this paper.

\begin{thm}\label{thm:main}
Let $A$ be a Cohen-Macaulay DG-ring with constant amplitude, 
and let $\bar{a}_1,\dots,\bar{a}_n \in \mrm{H}^0(A)$ be any finite sequence of elements.
Then the Koszul complex $K = K(A;\bar{a}_1,\dots,\bar{a}_n)$ is a Cohen-Macaulay DG-ring.
\end{thm}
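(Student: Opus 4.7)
The plan is to show Cohen-Macaulay-ness prime by prime using the localization formula for the Koszul complex, and then verify the local-CM condition by a direct computation of sequential depth via the formulas of Sections \ref{sec:Koszul} and \ref{sec:dim}.

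First I would use Proposition \ref{prop:kosLocal} to reduce to the local case. For any $\bar{\p} \in \opn{Spec}(\mrm{H}^0(K))$, let $\bar{\q}$ be its preimage in $\mrm{H}^0(A)$ under $\mrm{H}^0(A) \surj \mrm{H}^0(A)/\bar{I}$, where $\bar{I} := (\bar{a}_1,\dots,\bar{a}_n)$. Then $K_{\bar{\p}} \cong K(A_{\bar{\q}}; \bar{a}_1/1, \dots, \bar{a}_n/1)$. The DG-ring $A_{\bar{\q}}$ is local-Cohen-Macaulay because $A$ is Cohen-Macaulay, and it inherits constant amplitude from $A$: since $\opn{Supp}(\mrm{H}^{\inf(A)}(A)) = \opn{Spec}(\mrm{H}^0(A))$ forces $\bar{\q}$ to lie in this support, one has $\inf(A_{\bar{\q}}) = \inf(A)$ and the localized module $\mrm{H}^{\inf(A)}(A)_{\bar{\q}}$ again has full support. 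Hence it suffices to prove the following local statement: if $(A,\bar{\m})$ is a local-Cohen-Macaulay DG-ring with constant amplitude and $\bar{a}_1,\dots,\bar{a}_n \in \bar{\m}$, then $\opn{seq.depth}_K(K) = \dim(\mrm{H}^0(K))$.

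To verify this equality I would compute both terms of $\opn{seq.depth}_K(K) = \opn{depth}_K(K) - \inf(K)$, using Proposition \ref{prop:depth-formula}. For $\inf(K)$: Proposition \ref{prop:depthKoszul} applied to $A$ with the sequence $\bar{a}_1,\dots,\bar{a}_n$ gives $\inf(K) = \opn{depth}_A(\bar{I},A) - n$, and Theorem \ref{thm:seq-depth} combined with Proposition \ref{prop:depth-formula} yields
\[
\opn{depth}_A(\bar{I},A) = \dim(\mrm{H}^0(A)) - \dim(\mrm{H}^0(A)/\bar{I}) + \inf(A).
\]
For $\opn{depth}_K(K)$: choose generators $\bar{x}_1,\dots,\bar{x}_r$ of $\bar{\m}$, so that their images generate the maximal ideal $\bar{\m}_K$ of $\mrm{H}^0(K)$. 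Proposition \ref{prop:depthKoszul} applied to $K$ together with base change (Proposition \ref{prop:KoszulChange}) and concatenation (Proposition \ref{prop:KosDGcon}) gives
\[
\opn{depth}_K(K) = \inf\bigl(K(K;\bar{x}_1/1,\dots,\bar{x}_r/1)\bigr) + r = \inf\bigl(K(A; \bar{x}_1,\dots,\bar{x}_r, \bar{a}_1,\dots,\bar{a}_n)\bigr) + r.
\]
Since $\bar{x}_1,\dots,\bar{x}_r,\bar{a}_1,\dots,\bar{a}_n$ together generate $\bar{\m}$, a second application of Proposition \ref{prop:depthKoszul} and the local-CM hypothesis on $A$ give $\opn{depth}_K(K) = \opn{depth}_A(\bar{\m},A) - n = \dim(\mrm{H}^0(A)) + \inf(A) - n$. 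Subtracting the two formulas produces
\[
\opn{seq.depth}_K(K) = \dim(\mrm{H}^0(A)/\bar{I}) = \dim(\mrm{H}^0(K)),
\]
establishing the local-Cohen-Macaulay condition for $K$.

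The main obstacle I expect is the reduction step, where one must be careful that constant amplitude really does pass to every relevant localization $A_{\bar{\q}}$. This is precisely where the full-support hypothesis is used in an essential way, and it is what makes the constant amplitude assumption (rather than just local-Cohen-Macaulayness) enter the proof. Once that reduction is clean, everything else is a direct bookkeeping with the explicit depth-Koszul formulas from Section \ref{sec:Koszul} and the sequential depth formula from Section \ref{sec:dim}; no appeal to dualizing DG-modules or derived completion is needed to reach the Cohen-Macaulay conclusion.
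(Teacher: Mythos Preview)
Your argument is correct and takes a genuinely different route from the paper's. The paper, after the same localization reduction via Proposition \ref{prop:kosLocal}, passes to the derived $\bar{\m}$-adic completion (using Theorem \ref{thm:koszulCompletion} and Lemma \ref{lem:supportCompletion}) in order to guarantee the existence of a dualizing DG-module $R$; it then verifies the local-Cohen-Macaulay condition for $K$ by explicitly computing $\amp(K)$ and $\amp(D)$ for $D = \mrm{R}\opn{Hom}_A(K,R)$ and showing they agree. Your approach bypasses both the derived completion step and dualizing DG-modules entirely: the concatenation trick $K(K;\bar{x}_1,\dots,\bar{x}_r) \cong K(A;\bar{x}_1,\dots,\bar{x}_r,\bar{a}_1,\dots,\bar{a}_n)$ lets you express $\opn{depth}_K(K)$ directly as $\opn{depth}_A(\bar{\m},A)-n$, and the computation then closes using only Proposition \ref{prop:depthKoszul}, Proposition \ref{prop:depth-formula}, and Theorem \ref{thm:seq-depth}. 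This is strictly more elementary; in particular it renders the rather delicate Theorem \ref{thm:koszulCompletion} unnecessary for the main theorem itself. What the paper's approach buys is the explicit identification of $\amp(D)$ with $\amp(K)$, though the amplitude formula (\ref{eqn:ampKoszul}) itself, which is all that is used downstream (e.g.\ in Corollary \ref{cor:miracle}), already falls out of your computation of $\inf(K)$.

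One small point of phrasing: in your reduction you state the local hypothesis as ``$(A,\bar{\m})$ is a local-Cohen-Macaulay DG-ring with constant amplitude,'' but Theorem \ref{thm:seq-depth} is stated for a \emph{Cohen-Macaulay} local DG-ring with constant amplitude. This is harmless, since any localization of a Cohen-Macaulay DG-ring is again Cohen-Macaulay (all further localizations remain local-Cohen-Macaulay by hypothesis), so $A_{\bar{\q}}$ already satisfies the stronger condition; alternatively, local-Cohen-Macaulay together with constant amplitude implies Cohen-Macaulay, as recalled in Section \ref{sec:pre}. It would be cleaner to state the reduced hypothesis as Cohen-Macaulay from the outset.
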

\begin{proof}
Given $\bar{\p} \in \opn{Spec}(\mrm{H}^0(K))$,
we must show that $K_{\bar{\p}}$ is local-Cohen-Macaulay.
Let $\bar{\q}$ be the inverse image of $\bar{\p}$ in $\mrm{H}^0(A)$.
By Proposition \ref{prop:kosLocal},
we know that
\[
K_{\bar{\p}} \cong K(A_{\bar{\q}};\bar{a}_1/1,\dots,\bar{a}_n/1).
\]
Replacing $A$ by $A_{\bar{\q}}$,
we may thus assume without loss of generality that $A$ is local.
Let us denote its maximal ideal by $\bar{\m}$.
After replacing $A$ by $A_{\bar{\q}}$,
it is enough to show that $K = K(A;\bar{a}_1,\dots,\bar{a}_n)$ is local-Cohen-Macaulay.
Moreover, if for some $1\le i \le n$, $\bar{a}_i \notin \bar{\m}$,
then $K_{\bar{\p}} \cong 0$,
so we might as well assume that $\bar{a}_i \in \bar{\m}$ for all $1 \le i \le n$.

Letting $\bar{\n}$ denote the image of $\bar{\m}$ in $\mrm{H}^0(K)$,
by \cite[Proposition 4.6]{ShCM},
$K$ is local-Cohen-Macaulay if and only if
the derived completion $\mrm{L}\Lambda(K,\bar{\n})$ is local-Cohen-Macaulay.
Letting $\widehat{\bar{a}}_1,\dots,\widehat{\bar{a}}_n$ denote the images of
$\bar{a}_1,\dots,\bar{a}_n$ in the $\bar{\m}$-adic completion of $\mrm{H}^0(A)$,
according to Theorem \ref{thm:koszulCompletion},
\[
\mrm{L}\Lambda( K,\bar{\n}) \cong K( \mrm{L}\Lambda(A,\bar{\m}); \widehat{\bar{a}}_1,\dots,\widehat{\bar{a}}_n). 
\]
We further note that by \cite[Proposition 1.7]{ShCM},
we have that $\inf(\mrm{L}\Lambda(A,\bar{\m})) = \inf(A)$,
and $\mrm{H}^{\inf(A)}(\mrm{L}\Lambda(A,\bar{\m}))$ is equal to the $\bar{\m}$-adic completion of $\mrm{H}^{\inf(A)}(A)$.
Hence, by Lemma \ref{lem:supportCompletion},
we see that
\[
\opn{Supp}(\mrm{H}^{\inf(\mrm{L}\Lambda(A,\bar{\m}))}(\mrm{L}\Lambda(A,\bar{\m}))) = \opn{Spec}(\mrm{H}^0(\mrm{L}\Lambda(A,\bar{\m}))).
\]
Hence, we may replace $A$ by $\mrm{L}\Lambda(A,\bar{\m})$,
so we may assume without loss of generality that $A$ is both local and derived $\bar{\m}$-adically complete.

By \cite[Proposition 7.21]{ShInj},
this implies that $A$ has a dualizing DG-module.
Let $R$ be a dualizing DG-module over $A$.
Define $D := \mrm{R}\opn{Hom}_A(K,R)$.
Since $\mrm{H}^0(A) \to \mrm{H}^0(K)$ is surjective,
by \cite[Proposition 7.5]{Ye1},
the DG-module $D$ is a dualizing DG-module over $K$.
Hence, it is enough to show that $\amp(K) = \amp(D)$.

To do this, we will explicitly compute these two numbers.
Since $K$ is a non-positive DG-ring,
we have that $\amp(K) = -\inf(K)$.
By Proposition \ref{prop:depthKoszul},
we know that
\[
-\inf(K) = n-\opn{depth}_A(\bar{I},A),
\]
and by Proposition \ref{prop:depth-formula} and Theorem \ref{thm:seq-depth},
we have that
\begin{gather*}
n-\opn{depth}_A(\bar{I},A) =
n-\left(\opn{seq.depth}_A(\bar{I},A)+\inf(A)\right) =\\
n-\left(\dim(\mrm{H}^0(A)-\dim(\mrm{H}^0(A)/\bar{I})+\inf(A) \right).
\end{gather*}
It follows that
\begin{equation}\label{eqn:ampKoszul}
\amp(K) = n-\dim(\mrm{H}^0(A))+\dim(\mrm{H}^0(A)/\bar{I})-\inf(A).
\end{equation}

To compute $\amp(D)$,
we may forget its $K$-structure,
and treat it as a DG-module over $A$.
Since by Proposition \ref{prop:compact},
$K$ is compact over $A$,
by \cite[Theorem 14.1.22]{YeBook},
we have that
\[
D = \mrm{R}\opn{Hom}_A(K,R) \cong \mrm{R}\opn{Hom}_A(K,A)\otimes^{\mrm{L}}_A R.
\]
By Proposition \ref{prop:dualKosz}, 
we see that
\[
\mrm{R}\opn{Hom}_A(K,A)\otimes^{\mrm{L}}_A R \cong K[-n]\otimes^{\mrm{L}}_A R.
\]
We deduce that $\amp(D) = \amp(K\otimes^{\mrm{L}}_A R)$.
To compute the latter,
let us normalize $R$,
so that $\inf(R) = -\dim(\mrm{H}^0(A))$.
Since $A$ is Cohen-Macaulay,
we know that $\amp(R) = \amp(A)$,
so that $\sup(R) = \amp(A)-\dim(\mrm{H}^0(A))$.
It follows from Nakayama's lemma that
\begin{equation}\label{eqn:supKR}
\sup(K\otimes^{\mrm{L}}_A R) = \sup(R) = \amp(A)-\dim(\mrm{H}^0(A)),
\end{equation}
while by Proposition \ref{prop:depthKoszul},
we have that
\begin{equation}\label{eqn:infKR}
\inf(K\otimes^{\mrm{L}}_A R) = \opn{depth}_A(\bar{I},R)-n.
\end{equation}

By \cite[Proposition 7.5]{Ye1},
the complex $\mrm{R}\opn{Hom}_A(\mrm{H}^0(A),R)$ is a dualizing complex over $\mrm{H}^0(A)$,
and by \cite[Proposition 3.3]{ShInj},
we know that 
\[
\inf\left(\mrm{R}\opn{Hom}_A(\mrm{H}^0(A),R)\right) = \inf(R) = -\dim(\mrm{H}^0(A)).
\]
It follows that $\mrm{R}\opn{Hom}_A(\mrm{H}^0(A),R)$ is a normalized dualizing complex,
in the sense of \cite[tag 0A7M]{SP},
so by \cite[tag 0A7N]{SP},
the complex
\[
\mrm{R}\opn{Hom}_{\mrm{H}^0(A)}(\mrm{H}^0(A)/\bar{I},\mrm{R}\opn{Hom}_A(\mrm{H}^0(A),R)) \cong \mrm{R}\opn{Hom}_A(\mrm{H}^0(A)/\bar{I},R)
\]
is a normalized dualizing complex over $\mrm{H}^0(A)/\bar{I}$.
This implies that
\[
\inf\left(\mrm{R}\opn{Hom}_A(\mrm{H}^0(A)/\bar{I},R)\right) = -\dim(\mrm{H}^0(A)/\bar{I}),
\]
which, by definition, shows that
\[
\opn{depth}_A(\bar{I},R) = -\dim(\mrm{H}^0(A)/\bar{I}).
\]
Combining this with (\ref{eqn:supKR}) and (\ref{eqn:infKR}),
we obtain:
\begin{gather*}
\amp(D) = \amp(K\otimes^{\mrm{L}}_A R) = \sup(K\otimes^{\mrm{L}}_A R) - \inf(K\otimes^{\mrm{L}}_A R) =\\
\amp(A)-\dim(\mrm{H}^0(A)) - \left(\opn{depth}_A(\bar{I},R)-n \right) =\\
\amp(A)-\dim(\mrm{H}^0(A)) - \left(-\dim(\mrm{H}^0(A)/\bar{I}) - n\right) = \\
n - \dim(\mrm{H}^0(A)) + \dim(\mrm{H}^0(A)/\bar{I}) - \inf(A),
\end{gather*}
which is exactly (\ref{eqn:ampKoszul}),
proving that $K$ is local-Cohen-Macaulay.
\end{proof}

As an important particular case of Theorem \ref{thm:main} we obtain:
\begin{cor}\label{cor:kosRing}
Let $A$ be a Cohen-Macaulay ring,
and let $a_1,\dots,a_n \in A$ be any finite sequence of elements.
Then the Koszul complex $K = K(A;a_1,\dots,a_n)$ is a Cohen-Macaulay DG-ring.
\end{cor}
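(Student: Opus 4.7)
The plan is to deduce this corollary as an immediate special case of Theorem \ref{thm:main}, by verifying that an ordinary Cohen-Macaulay ring, regarded as a DG-ring concentrated in cohomological degree zero, fits into the framework of that theorem.

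First I would observe that $A$, viewed as a DG-ring concentrated in degree $0$, is a commutative noetherian DG-ring with $\mrm{H}^0(A) = A$ and $\mrm{H}^n(A) = 0$ for all $n \ne 0$. In particular $A$ has bounded cohomology with $\inf(A) = \sup(A) = 0$ and $\amp(A) = 0$. For such a DG-ring, the notion of local-Cohen-Macaulay from Section \ref{sec:pre} reduces to the classical Cohen-Macaulay condition on the localizations $A_{\bar{\p}}$, since $\opn{seq.depth}_{A_{\bar{\p}}}(A_{\bar{\p}})$ coincides with the usual depth and $\inf(A_{\bar{\p}}) = 0$. Thus the assumption that $A$ is Cohen-Macaulay as a ring is exactly the statement that $A$ is Cohen-Macaulay as a DG-ring in the sense used throughout the paper.

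Next I would verify the constant amplitude hypothesis. Since $\mrm{H}^{\inf(A)}(A) = \mrm{H}^0(A) = A$, the support condition
\[
\opn{Supp}(\mrm{H}^{\inf(A)}(A)) = \opn{Spec}(\mrm{H}^0(A))
\]
becomes the trivial equality $\opn{Supp}(A) = \opn{Spec}(A)$, which always holds. Therefore $A$ has constant amplitude.

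With these two verifications in hand, Theorem \ref{thm:main} applies directly to $A$ and any sequence $a_1, \dots, a_n \in A = \mrm{H}^0(A)$, yielding that the Koszul complex $K(A; a_1, \dots, a_n)$ is a Cohen-Macaulay DG-ring. There is no genuine obstacle here; the corollary is essentially a matter of checking that the hypotheses of the main theorem specialize correctly, and the only (minor) point requiring attention is the identification of the DG-notion of Cohen-Macaulay with the classical one when the DG-ring has zero amplitude.
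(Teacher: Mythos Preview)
Your proposal is correct and matches the paper's approach exactly: the paper simply states this corollary as ``an important particular case of Theorem~\ref{thm:main}'' without further argument, and your verification that a Cohen-Macaulay ring viewed as a DG-ring in degree zero is a Cohen-Macaulay DG-ring with constant amplitude is precisely the routine check that justifies this specialization.
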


\begin{rem}\label{rem:condition}
It is natural to ask if the assumption that $A$ has constant amplitude is necessary in Theorem \ref{thm:main}.
As Example \ref{exa:counter} below shows,
the theorem is false without this assumption.
We further remark that the proof of the theorem shows that one may assume slightly less,
namely, it is enough to assume that for any maximal ideal $\bar{\m} \in \opn{Spec}(\mrm{H}^0(A))$,
there is an equality
\begin{equation}\label{eqn:otherCond}
\opn{Supp}(\mrm{H}^{\inf(A_{\bar{\m}})}(A_{\bar{\m}})) = \opn{Spec}(\mrm{H}^0(A_{\bar{\m}})).
\end{equation}
In other words, it is enough to assume that the localizations of $A$ at maximal ideals have constant amplitude.
Since $A_{\bar{\m}}$ is Cohen-Macaulay,
by \cite[Proposition 4.11]{ShCM},
it holds that
\[
\dim(\mrm{H}^{\inf(A_{\bar{\m}})}(A_{\bar{\m}})) = \dim(\mrm{H}^0(A_{\bar{\m}})).
\]
Hence, we deduce (for instance, by \cite[Proposition 8.5]{ShCM})
that the condition (\ref{eqn:otherCond}) always holds if for each 
$\bar{\m} \in \opn{Spec}(\mrm{H}^0(A))$,
the local ring $\mrm{H}^0(A_{\bar{\m}})$ has an irreducible spectrum;
equivalently, if every maximal ideal in $\mrm{H}^0(A)$ contains a unique minimal prime ideal.
In particular, this is the case if $\mrm{H}^0(A)$ contains a unique minimal prime ideal.
\end{rem}

\begin{exa}\label{exa:counter}
As in Example \ref{exa:seq-depth},
Let $\k$ be a field,
let $B = \k[[x,y]]/(x\cdot y)$,
and let $M$ be the $B$-module $M = B/(x) = \k[[y]]$.
Consider again the trivial extension DG-ring $A = B \skewtimes M[2]$.
We saw that $\opn{seq.depth}_A(A) \ge 1$,
and as $\mrm{H}^0(A) = B$,
we see that $\dim(\mrm{H}^0(A)) = 1$,
so that $A$ is local-Cohen-Macaulay.
The two non-maximal prime ideals of $A$ are $\bar{\p} = (x)$ and $\bar{\q} = (y)$,
and both of them are of height $0$,
which implies by \cite[Proposition 4.8]{ShCM},
that the localizations $A_{\bar{\p}}$ and $A_{\bar{\q}}$ are local-Cohen-Macaulay,
so that $A$ is Cohen-Macaulay.

Consider the Koszul complex $K = K(A;y)$.
Then it holds that
\[
K \cong \k[[x]] \skewtimes \k[2].
\]
Hence, every element of $(x)$, 
the maximal ideal of $\mrm{H}^0(K)$ is not $K$-regular,
so we have that $\opn{seq.depth}_K(K) = 0$,
but $\dim(\mrm{H}^0(K)) = 1$.
It follows that $K$ is not Cohen-Macaulay.
\end{exa}

\begin{cor}
Let $B$ be a commutative noetherian ring which is a quotient of a Cohen-Macaulay ring.
Then there exists a Cohen-Macaulay DG-ring $A$ such that $\mrm{H}^0(A) \cong B$.
\end{cor}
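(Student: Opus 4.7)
The plan is to realize $B$ as $\mrm{H}^0$ of a Koszul complex over the ambient Cohen-Macaulay ring, and then invoke Corollary \ref{cor:kosRing} to conclude that this Koszul complex is Cohen-Macaulay. So the first step is to fix a Cohen-Macaulay ring $C$ together with a surjection $C \surj B$, which exists by hypothesis, and let $I \subseteq C$ be its kernel. Since $C$ is noetherian, $I$ is finitely generated, say $I = (a_1, \dots, a_n)$ for some $a_1, \dots, a_n \in C$.

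Next, set $A := K(C; a_1, \dots, a_n)$. By the basic properties of the Koszul complex recalled in Section \ref{sec:Koszul}, we have
\[
\mrm{H}^0(A) = C/(a_1, \dots, a_n) = C/I \cong B,
\]
which gives the required identification of bottom cohomology. Finally, since $C$ is Cohen-Macaulay, Corollary \ref{cor:kosRing} applies to $C$ and the sequence $a_1, \dots, a_n$, yielding that $A = K(C; a_1, \dots, a_n)$ is a Cohen-Macaulay DG-ring, as required. There is no real obstacle in this argument, as all the work has been carried out in proving Theorem \ref{thm:main}; the corollary is essentially an unpacking of that result together with the fact that the Koszul construction realizes arbitrary quotients at the level of $\mrm{H}^0$.
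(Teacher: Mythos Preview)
Your proof is correct and follows essentially the same approach as the paper: choose a Cohen-Macaulay ring $C$ with $C/I \cong B$, write $I = (a_1,\dots,a_n)$, and take $A = K(C;a_1,\dots,a_n)$. The only cosmetic difference is that the paper cites Theorem \ref{thm:main} directly rather than Corollary \ref{cor:kosRing}, and you are slightly more explicit in noting that $C$ is noetherian so $I$ is finitely generated.
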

\begin{proof}
Let $C$ be a Cohen-Macaulay ring,
such that there is an ideal $I \subseteq C$ with $C/I \cong B$.
Assume $I = (a_1,\dots,a_n)$,
Then by Theorem \ref{thm:main}, 
$A=K(C;a_1,\dots,a_n)$ is a Cohen-Macaulay DG-ring,
and $\mrm{H}^0(A)= C/(a_1,\dots,a_n) \cong B$.
\end{proof}

We finish this section with a corresponding result for Gorenstein DG-rings:
\begin{thm}\label{thm:Gorenstein}
Let $A$ be a commutative noetherian DG-ring with bounded cohomology,
and let $\bar{a}_1,\dots,\bar{a}_n \in \mrm{H}^0(A)$ be,
such that $(\bar{a}_1,\dots,\bar{a}_n) \subseteq \mrm{H}^0(A)$ is a proper ideal.
\leavevmode
\begin{enumerate}[wide, labelindent=0pt]
\item If $A$ is a Gorenstein DG-ring, then $K = K(A;\bar{a}_1,\dots,\bar{a}_n)$ is a Gorenstein DG-ring.
\item Conversely, if $K(A;\bar{a}_1,\dots,\bar{a}_n)$ is a Gorenstein DG-ring,
then for any 
\[
\bar{\p} \in V(\bar{a}_1,\dots,\bar{a}_n) = \{\bar{\p} \in \opn{Spec}(\mrm{H}^0(A)) \mid (\bar{a}_1,\dots,\bar{a}_n) \subseteq \bar{\p}\},
\]
the localization $A_{\bar{\p}}$ is a Gorenstein DG-ring.
\item In particular, if $(\bar{a}_1,\dots,\bar{a}_n) \subseteq \opn{rad}(\mrm{H}^0(A))$,
then $A$ is Gorenstein if and only if $K$ is Gorenstein.
\end{enumerate}
\end{thm}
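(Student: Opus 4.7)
The plan handles (1) and (2) separately, reducing each to the case where $(A,\bar{\m})$ is noetherian local with $\bar{a}_i\in\bar{\m}$; part (3) will then follow formally. For the reduction, Proposition~\ref{prop:kosLocal} says that for $\bar{\q}\in\opn{Spec}(\mrm{H}^0(K))$ with preimage $\bar{\p}\in\opn{Spec}(\mrm{H}^0(A))$ (which automatically lies in $V(\bar{a}_1,\dots,\bar{a}_n)$) one has $K_{\bar{\q}}\cong K(A_{\bar{\p}};\bar{a}_1/1,\dots,\bar{a}_n/1)$, so testing the Gorenstein property at $\bar{\q}$ matches testing it for the Koszul DG-ring over $A_{\bar{\p}}$. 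Under this reduction (1) becomes essentially a corollary of existing tools: when $A$ is Gorenstein local, $A$ itself is a dualizing DG-module over $A$, so by \cite[Proposition~7.5]{Ye1} applied to the surjection $\mrm{H}^0(A)\surj\mrm{H}^0(K)$ combined with Proposition~\ref{prop:dualKosz}, the DG-module $\mrm{R}\opn{Hom}_A(K,A)\cong K[-n]$ is dualizing over $K$, and hence $K$ is Gorenstein.

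For (2), assuming $K$ is Gorenstein local, the aim is to show that $\mrm{R}\opn{Hom}_A(\bar{k},A)$ has bounded cohomology, where $\bar{k}=\mrm{H}^0(A)/\bar{\m}$; by the standard characterization of Gorensteinness via injective dimension in the DG setting, this is equivalent to $A$ being Gorenstein. The key step is to compute $\mrm{R}\opn{Hom}_A(\bar{k},K)$ in two different ways and compare. First, iterating the distinguished triangle
\[
K(A;\bar{a}_1,\dots,\bar{a}_j) \to K(A;\bar{a}_1,\dots,\bar{a}_{j-1}) \xrightarrow{\bar{a}_j} K(A;\bar{a}_1,\dots,\bar{a}_{j-1})
\]
arising from Proposition~\ref{prop:KosDGcon}, and noting that $\bar{a}_j$ acts as zero on every $\bar{k}$-module so that each triangle splits after applying $\mrm{R}\opn{Hom}_A(\bar{k},-)$, produces
\[
\mrm{R}\opn{Hom}_A(\bar{k},K)\;\cong\;\bigoplus_{i=0}^{n}\mrm{R}\opn{Hom}_A(\bar{k},A)^{\binom{n}{i}}[-i].
\]
Second, adjunction for $A\to K$ combined with Proposition~\ref{prop:KoszulChange} identifies $\bar{k}\otimes^{\mrm{L}}_A K$ with $K(\bar{k};0,\dots,0)\cong\bigoplus_{i=0}^{n}\bar{k}^{\binom{n}{i}}[i]$, yielding
\[
\mrm{R}\opn{Hom}_A(\bar{k},K)\;\cong\;\bigoplus_{i=0}^{n}\mrm{R}\opn{Hom}_K(\bar{k},K)^{\binom{n}{i}}[-i].
\]
Matching these decompositions degree-by-degree in cohomology forces the cohomological boundedness of $\mrm{R}\opn{Hom}_A(\bar{k},A)$ to coincide with that of $\mrm{R}\opn{Hom}_K(\bar{k},K)$; since $K$ is Gorenstein the latter is bounded, so the former is too, and $A$ is Gorenstein.

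Part (3) then drops out of (1) and (2): the hypothesis $(\bar{a}_1,\dots,\bar{a}_n)\subseteq\opn{rad}(\mrm{H}^0(A))$ places every maximal ideal of $\mrm{H}^0(A)$ inside $V(\bar{a}_1,\dots,\bar{a}_n)$, so (2) upgrades Gorensteinness of $K$ to Gorensteinness of $A$ (checked at maximal ideals), while (1) supplies the converse. The main obstacle I anticipate is justifying the DG analogue of the ``Gorenstein iff $\mrm{R}\opn{Hom}_A(\bar{k},A)$ is cohomologically bounded'' characterization used in step~(2); this likely requires that $A$ carry a dualizing DG-module, for which one further reduces, exactly as in the proof of Theorem~\ref{thm:main}, to the derived $\bar{\m}$-adically complete case via Theorem~\ref{thm:koszulCompletion} (plus the Gorenstein analogue of the completion-preservation used there), whereupon \cite[Proposition~7.21]{ShInj} supplies the needed dualizing DG-module.
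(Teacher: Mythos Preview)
Your treatment of (1) and (3) matches the paper exactly. For (2) your approach works but is more roundabout than necessary, and the paper's argument sidesteps the obstacle you flag at the end.

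The paper's proof of (2), after the same reduction to the local case, uses the \emph{other} adjunction together with Proposition~\ref{prop:dualKosz}: since $K$ is Gorenstein it is dualizing over itself, hence so is its shift $\mrm{R}\opn{Hom}_A(K,A)\cong K[-n]$; then
\[
\mrm{R}\opn{Hom}_A(\bar{k},A)\ \cong\ \mrm{R}\opn{Hom}_K\bigl(\bar{k},\mrm{R}\opn{Hom}_A(K,A)\bigr)\ \cong\ \mrm{R}\opn{Hom}_K(\bar{k},K)[-n]\ \cong\ \bar{k}[j]
\]
for some $j$, by \cite[Theorem II]{FIJ} applied to the dualizing DG-module $K$. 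A second application of \cite[Theorem II]{FIJ}, now over $A$, immediately gives that $A$ is dualizing over itself, i.e.\ Gorenstein. No completion, no dualizing DG-module for $A$, and no appeal to ``bounded $\mrm{R}\opn{Hom}_A(\bar{k},A)\Rightarrow$ finite injective dimension'' is needed. Note that you already used both ingredients---\cite[Proposition 7.5]{Ye1} and Proposition~\ref{prop:dualKosz}---in your proof of (1); the paper simply reuses them for (2).

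Your two Koszul decompositions are correct (up to the sign of the shifts), and you can in fact close your own gap without completion: applying \cite[Theorem II]{FIJ} to $K$ gives $\mrm{R}\opn{Hom}_K(\bar{k},K)\cong\bar{k}[m]$, so your second decomposition shows $\dim_{\bar{k}}\mrm{H}^*\bigl(\mrm{R}\opn{Hom}_A(\bar{k},K)\bigr)=2^n$, while your first decomposition shows this equals $2^n\cdot\dim_{\bar{k}}\mrm{H}^*\bigl(\mrm{R}\opn{Hom}_A(\bar{k},A)\bigr)$. Hence $\mrm{R}\opn{Hom}_A(\bar{k},A)\cong\bar{k}[j]$, and \cite[Theorem II]{FIJ} finishes as in the paper. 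The detour through Theorem~\ref{thm:koszulCompletion} and \cite[Proposition 7.21]{ShInj} is therefore unnecessary.
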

\begin{proof}
\leavevmode
\begin{enumerate}[wide, labelindent=0pt]
\item Similarly to the proof of Theorem \ref{thm:main},
using Proposition \ref{prop:kosLocal},
we may reduce to the case where $(A,\bar{\m})$ is a Gorenstein local DG-ring,
and $\bar{a}_1\dots,\bar{a}_n \in \bar{\m}$. 
Then $A$ is a dualizing DG-module over itself, 
so by \cite[Proposition 7.5]{Ye1},
the DG-module $\mrm{R}\opn{Hom}_A(K,A)$ is a dualizing DG-module over $K$,
and since by Proposition \ref{prop:dualKosz} it is isomorphic to a shift of $K$,
we deduce that $K$ is Gorenstein.
\item Since any localization of the Gorenstein DG-ring $K$ is Gorenstein,
we may use Proposition \ref{prop:kosLocal} to reduce to the case where $(A,\bar{\m})$ is a noetherian local DG-ring,
and moreover $\bar{a}_1,\dots,\bar{a}_n \in \bar{\m}$.
Let us denote by $\bar{\n}$ the maximal ideal of $\mrm{H}^0(K)$,
and let $\k = \mrm{H}^0(A)/\bar{\m} = \mrm{H}^0(K)/\bar{\n}$ be the residue field.
Since $K$ is Gorenstein,
it is a dualizing DG-module over itself,
so by Proposition \ref{prop:dualKosz},
the DG-module $\mrm{R}\opn{Hom}_A(K,A)$ is also a dualizing DG-module over $K$.
Hence, according to \cite[Theorem II]{FIJ},
there is an integer $j \in \mathbb{Z}$,
such that
\[
\k[j] \cong \mrm{R}\opn{Hom}_K(\k,\mrm{R}\opn{Hom}_A(K,A)) \cong \mrm{R}\opn{Hom}_A(\k,A),
\]
which implies by \cite[Theorem II]{FIJ} that $A$ is a dualizing DG-module over itself,
so that $A$ is Gorenstein.
\item This follows from (1), (2), and the fact that the localization of a Gorenstein DG-ring is Gorenstein.
\end{enumerate}
\end{proof}

\begin{rem}
As mentioned in the introduction,
this result generalizes the main result of \cite{AG}, 
and \cite[Theorem 4.9]{FJ} in two different ways.
First, we do not assume that $A$ is local,
and second, we allow $A$ to be a commutative DG-ring instead of a commutative ring.
\end{rem}

\begin{rem}
In contrast with Theorem \ref{thm:Gorenstein},
the converse of Theorem \ref{thm:main} is false.
Indeed, if $(A,\m)$ is any noetherian local ring,
and if $(a_1,\dots,a_n)$ is a system of parameters of $A$,
then $K = K(A;a_1,\dots,a_n)$ satisfies
\[
\mrm{H}^0(K) = A/(a_1,\dots,a_n)
\]
and, by the definition of the notion of a system of parameters,
the latter has Krull dimension zero. 
By \cite[Proposition 4.8]{ShCM}, 
this implies that $K$ is Cohen-Macaulay,
but of course in general $A$ need not be Cohen-Macaulay.
\end{rem}

\section{Applications to fibers of local homomorphisms}\label{sec:APP}

Recall that if $\varphi:(A,\m) \to (B,\n)$ is a local homomorphism between noetherian local rings,
then its fiber ring, 
is the ring $\opn{Fib}(\varphi) = A/\m \otimes_A B$.
One may derive this,
obtaining the homotopy fiber of $\varphi$,
defined as $\mrm{H}\opn{Fib}(\varphi) := A/\m \otimes^{\mrm{L}}_A B$.
This is a noetherian local DG-ring.

This discussion generalizes to the case where $B$ is a DG-ring.
If $(A,\m)$ is a noetherian local ring,
and $(B,\bar{\n})$ is a noetherian local DG-ring,
then a map of DG-rings $\varphi:A \to B$ is called local if the induced map $\mrm{H}^0(\varphi):A \to \mrm{H}^0(B)$ is a local homomorphism.
In that case,
the homotopy fiber of $\varphi$ is defined to be
$\mrm{H}\opn{Fib}(\varphi) := A/\m \otimes^{\mrm{L}}_A B$.
Again, this is a noetherian local DG-ring.

In \cite{AF},
Avramov and Foxby introduced the notion of a Gorenstein local homomorphism,
and showed that a map $\varphi:(A,\m) \to (B,\n)$ of finite flat dimension between noetherian local rings is Gorenstein 
if and only if the DG-ring $\mrm{H}\opn{Fib}(\varphi)$ is Gorenstein.
In particular, this is the case if $A$ and $B$ are both Gorenstein.

We now obtain, as a corollary of Theorem \ref{thm:main},
some analogues of this result in the Cohen-Macaulay case.

\begin{cor}\label{cor:hfib}
Let $(A,\m)$ be a regular local ring,
and let $(B,\bar{\n})$ be a Cohen-Macaulay local DG-ring with constant amplitude.
Let $\varphi:A \to B$ be a local homomorphism of DG-rings.
Then the homotopy fiber
\[
\mrm{H}\opn{Fib}(\varphi) = A/\m \otimes^{\mrm{L}}_A B
\]
is a Cohen-Macaulay DG-ring.
\end{cor}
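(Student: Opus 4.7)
The plan is to realize the homotopy fiber as a Koszul DG-ring over $B$ and then invoke Theorem \ref{thm:main} directly. The only subtlety is bookkeeping to make sure the hypotheses of Theorem \ref{thm:main} are in force.

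First, since $(A,\m)$ is a regular local ring, its maximal ideal $\m$ is generated by a regular sequence $a_1,\dots,a_n \in A$, where $n = \dim(A)$. Because this sequence is $A$-regular, the Koszul complex $K(A;a_1,\dots,a_n)$ is a semi-free (in particular K-flat) commutative DG-algebra resolution of $A/\m$ over $A$. Hence
\[
\mrm{H}\opn{Fib}(\varphi) \;=\; A/\m \otimes^{\mrm{L}}_A B \;\cong\; K(A;a_1,\dots,a_n)\otimes^{\mrm{L}}_A B.
\]

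Next, let $\bar{b}_i \in \mrm{H}^0(B)$ denote the image of $a_i$ under the composition $A \xrightarrow{\varphi} B \to \mrm{H}^0(B)$. By the base change property of the Koszul complex over DG-rings (Proposition \ref{prop:KoszulChange}), there is an isomorphism
\[
K(A;a_1,\dots,a_n)\otimes^{\mrm{L}}_A B \;\cong\; K(B;\bar{b}_1,\dots,\bar{b}_n)
\]
in $\ho(\cdg)$. Combining the two displays, $\mrm{H}\opn{Fib}(\varphi)$ is isomorphic, as a commutative DG-ring, to a Koszul DG-ring over $B$.

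Finally, since $B$ is a Cohen-Macaulay DG-ring with constant amplitude by hypothesis, Theorem \ref{thm:main} applied to $B$ and the sequence $\bar{b}_1,\dots,\bar{b}_n \in \mrm{H}^0(B)$ tells us that $K(B;\bar{b}_1,\dots,\bar{b}_n)$ is a Cohen-Macaulay DG-ring, and therefore so is $\mrm{H}\opn{Fib}(\varphi)$. There is no real obstacle here: the entire proof is the observation that the Koszul resolution of $A/\m$ makes the derived fiber into a Koszul construction to which the main theorem applies; regularity of $A$ is used only to guarantee the existence of a generating regular sequence for $\m$, and the constant amplitude hypothesis on $B$ is inherited by the Koszul complex via Theorem \ref{thm:main} itself.
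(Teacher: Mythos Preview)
Your proof is correct and follows essentially the same route as the paper: realize $A/\m$ as the Koszul complex on a regular system of parameters of the regular local ring $A$, base-change via Proposition~\ref{prop:KoszulChange} to identify the homotopy fiber with $K(B;\bar{b}_1,\dots,\bar{b}_n)$, and then apply Theorem~\ref{thm:main}. One small remark: your closing comment that ``the constant amplitude hypothesis on $B$ is inherited by the Koszul complex via Theorem~\ref{thm:main} itself'' is not quite accurate (the theorem asserts Cohen--Macaulayness, not constant amplitude, of the Koszul complex), but this aside plays no role in the argument.
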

\begin{proof}
Since $A$ is a regular local ring, 
we may find an $A$-regular sequence $a_1,\dots,a_n \in \m$,
such that $(a_1,\dots,a_n) = \m$.
It follows that there is an isomorphism
\[
A/\m \cong K(A;a_1,\dots,a_n)
\]
in $\cat{D}(A)$.
Hence,
\[
\mrm{H}\opn{Fib}(\varphi) = A/\m \otimes^{\mrm{L}}_A B \cong K(A;a_1,\dots,a_n) \otimes^{\mrm{L}}_A B \cong K(B;\bar{b}_1,\dots,\bar{b}_n),
\]
where we have set $\bar{b}_i = \mrm{H}^0(\varphi)(a_i) \in \bar{\n}$.
Since $B$ is Cohen-Macaulay,
by Theorem \ref{thm:main}, 
we deduce that $\mrm{H}\opn{Fib}(\varphi)$ is a Cohen-Macaulay DG-ring.
\end{proof}

In the special case where $B$ is a ring,
we obtain:
\begin{cor}
Let $\varphi:(A,\m) \to (B,\n)$ be a local homomorphism between noetherian local rings,
such that $A$ is regular and $B$ is Cohen-Macaulay.
Then the homotopy fiber
\[
\mrm{H}\opn{Fib}(\varphi) = A/\m \otimes^{\mrm{L}}_A B
\]
is a Cohen-Macaulay DG-ring.
\end{cor}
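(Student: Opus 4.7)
The plan is to derive this as an immediate special case of Corollary \ref{cor:hfib}. First I would observe that a ring, viewed as a DG-ring concentrated in degree zero, is automatically a noetherian local DG-ring when it is a noetherian local ring, and that the Cohen-Macaulay property of the ring $B$ coincides with the Cohen-Macaulay property of $B$ as a DG-ring (by the discussion in Section \ref{sec:pre}, since $B$ has dualizing DG-modules iff it has dualizing complexes, and $\amp(B) = 0 = \amp(R)$ corresponds to the classical Cohen-Macaulay condition via the standard Bass formula).

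The only nontrivial thing to check is the constant amplitude hypothesis of Corollary \ref{cor:hfib}. Since $B$ is concentrated in degree zero we have $\inf(B) = 0$, so $\mrm{H}^{\inf(B)}(B) = \mrm{H}^0(B) = B$. Thus
\[
\opn{Supp}(\mrm{H}^{\inf(B)}(B)) = \opn{Supp}_B(B) = \opn{Spec}(B) = \opn{Spec}(\mrm{H}^0(B)),
\]
so the constant amplitude condition is automatically satisfied for ordinary rings.

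With these verifications in place, Corollary \ref{cor:hfib} applies directly: the local homomorphism $\varphi:(A,\m) \to (B,\n) = (B,\bar{\n})$ from the regular local ring $A$ to the Cohen-Macaulay local DG-ring $B$ with constant amplitude yields that $\mrm{H}\opn{Fib}(\varphi) = A/\m \otimes^{\mrm{L}}_A B$ is a Cohen-Macaulay DG-ring. There is no real obstacle here; the entire content has been absorbed into Corollary \ref{cor:hfib}, and this statement is essentially a translation for the reader who prefers ordinary rings on both sides. If one wanted a self-contained argument rather than a citation of Corollary \ref{cor:hfib}, one would choose a regular system of parameters $a_1,\dots,a_n$ for $\m$, use the Koszul resolution $K(A;a_1,\dots,a_n) \xrightarrow{\simeq} A/\m$, invoke Proposition \ref{prop:KoszulChange} to identify $\mrm{H}\opn{Fib}(\varphi)$ with $K(B;\bar{b}_1,\dots,\bar{b}_n)$ where $\bar{b}_i = \varphi(a_i) \in \bar{\n}$, and conclude by Corollary \ref{cor:kosRing} (which is Theorem \ref{thm:main} specialized to rings).
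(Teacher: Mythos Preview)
Your proposal is correct and matches the paper's approach: the paper simply states this corollary immediately after Corollary~\ref{cor:hfib} with the words ``In the special case where $B$ is a ring, we obtain,'' providing no further argument. Your verification that a ring trivially has constant amplitude (since $\mrm{H}^{\inf(B)}(B)=B$) is exactly the missing detail needed to invoke Corollary~\ref{cor:hfib}, and your alternative self-contained route via Corollary~\ref{cor:kosRing} is also fine.
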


In geometric language,
this application may be stated as:

\begin{cor}
Let $f:X \to Y$ be a morphism of schemes,
and assume that $X$ is Cohen-Macaulay and that $Y$ is nonsingular.
Then the homotopy fiber of $f$ at every point is a Cohen-Macaulay DG-ring.
\end{cor}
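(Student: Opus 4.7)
The plan is to reduce the geometric statement to Corollary \ref{cor:hfib} by localizing at a point. Given a point $x \in X$ with image $y = f(x) \in Y$, the morphism $f$ induces a local homomorphism of noetherian local rings $\varphi: \mcal{O}_{Y,y} \to \mcal{O}_{X,x}$, and by definition the homotopy fiber of $f$ at $x$ is the noetherian local DG-ring
\[
\mrm{H}\opn{Fib}(\varphi) = \mcal{O}_{Y,y}/\m_y \otimes^{\mrm{L}}_{\mcal{O}_{Y,y}} \mcal{O}_{X,x},
\]
where $\m_y$ denotes the maximal ideal of $\mcal{O}_{Y,y}$. So the first step is simply to pass to stalks and recognize the homotopy fiber as a derived tensor product of the kind appearing in the previous corollary.

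Next, I would verify the two hypotheses of Corollary \ref{cor:hfib} in its ring version. The assumption that $Y$ is nonsingular means precisely that $\mcal{O}_{Y,y}$ is a regular local ring, and the assumption that $X$ is Cohen-Macaulay means that $\mcal{O}_{X,x}$ is a Cohen-Macaulay local ring; in particular, being concentrated in degree zero, it trivially satisfies the constant amplitude assumption. Hence both hypotheses of the corollary are met, and its conclusion yields immediately that $\mrm{H}\opn{Fib}(\varphi)$ is a Cohen-Macaulay DG-ring.

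The present statement therefore amounts to a translation of the earlier corollary from the language of commutative algebra to that of scheme theory, and there is no genuine obstacle to overcome: all of the substance has been absorbed into Theorem \ref{thm:main} and the derivation of Corollary \ref{cor:hfib} from it, via the Koszul resolution of the residue field $\mcal{O}_{Y,y}/\m_y$ by a regular sequence generating $\m_y$.
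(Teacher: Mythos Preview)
Your proposal is correct and matches the paper's treatment: the paper states this corollary without proof, introducing it simply as the geometric restatement of the preceding corollary about local homomorphisms, which is exactly the reduction you carry out.
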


Our next application is a generalization of the miracle flatness theorem.
Below, for a complex $M$ over a ring $A$, we denote by $\flatdim_A(M)$ the flat dimension of $M$ over $A$.
Recall that if $\varphi:(A,\m) \to (B,\n)$ is a local map between noetherian local rings,
if $A$ is regular and $B$ is Cohen-Macaulay,
then the miracle flatness theorem (\cite[Theorem 23.1]{Mat}),
states that $\varphi$ is flat if and only if 
\[
\dim(B) = \dim(A) + \dim(\opn{Fib}(\varphi)) = \dim(A) + \dim(B/\m B).
\]

\begin{cor}\label{cor:miracle}
Let $(A,\m)$ be a regular local ring,
let $(B,\bar{\n})$ be a Cohen-Macaulay local DG-ring with constant amplitude,
and suppose that $B^0$ is a noetherian ring.
Let $\varphi:A \to B$ be a local homomorphism of DG-rings.
Then there is an equality
\begin{gather*}
\flatdim_A(B) = \dim(A) - \dim(\mrm{H}^0(B)) + \dim\left(\mrm{H}^0(B)/\m\mrm{H}^0(B)\right) + \amp(B).
\end{gather*}
\end{cor}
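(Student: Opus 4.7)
The strategy is to identify $\flatdim_A(B)$ with $\amp(\mrm{H}\opn{Fib}(\varphi))$ and then read off this amplitude from the computation already performed in the proof of Theorem~\ref{thm:main}.

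Set $d := \dim(A)$ and choose a regular system of parameters $a_1,\dots,a_d$ for $A$; put $\bar{b}_i := \mrm{H}^0(\varphi)(a_i) \in \bar{\n}$. Since $A$ is regular, $K(A; a_1,\dots,a_d)$ is a finite free resolution of $k := A/\m$, so Proposition~\ref{prop:KoszulChange} yields
\[
k \otimes^{\mrm{L}}_A B \;\cong\; K(A; a_1,\dots,a_d) \otimes^{\mrm{L}}_A B \;\cong\; K(B; \bar{b}_1,\dots,\bar{b}_d) \;=\; \mrm{H}\opn{Fib}(\varphi).
\]
To identify $\flatdim_A(B)$ with the amplitude of this homotopy fiber, observe that $B \in \cat{D}^{\mrm{b}}(A)$ has finite flat dimension because $A$ is regular, so the standard derived Auslander--Buchsbaum formula over a noetherian local ring gives $\flatdim_A(B) = -\inf(k \otimes^{\mrm{L}}_A B)$. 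Locality of $\varphi$ makes $\mrm{H}^0(\mrm{H}\opn{Fib}(\varphi)) = \mrm{H}^0(B)/\m\mrm{H}^0(B)$ nonzero, so $\sup(\mrm{H}\opn{Fib}(\varphi)) = 0$ and hence $\flatdim_A(B) = \amp(\mrm{H}\opn{Fib}(\varphi))$.

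To evaluate $\amp(\mrm{H}\opn{Fib}(\varphi)) = \amp(K(B; \bar{b}_1,\dots,\bar{b}_d))$ I would invoke the amplitude formula~(\ref{eqn:ampKoszul}) from the proof of Theorem~\ref{thm:main}, now applied with $B$ in place of $A$ and with the ideal $\bar{I} = (\bar{b}_1,\dots,\bar{b}_d) = \m\mrm{H}^0(B)$. That derivation rests only on Propositions~\ref{prop:depthKoszul} and \ref{prop:depth-formula} together with Theorem~\ref{thm:seq-depth}, all of which apply to $B$ directly because $B$ is Cohen--Macaulay local with constant amplitude, so no further reduction to the derived complete case is needed. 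Substituting $d = \dim(A)$ and $-\inf(B) = \amp(B)$ (using $\sup(B) = 0$) then produces the claimed equality.

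The main subtlety is the Auslander--Buchsbaum identity $\flatdim_A(B) = -\inf(k \otimes^{\mrm{L}}_A B)$ used in the middle step. It is classical for any complex of finite flat dimension over a noetherian local ring; regularity of $A$ ensures the finiteness hypothesis, and the assumption that $B^0$ is noetherian is what makes the flat dimension theory of $B$, viewed as an object of $\cat{D}(A)$, behave well enough for the standard argument to apply.
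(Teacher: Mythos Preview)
Your proposal is correct and follows essentially the same route as the paper: identify the homotopy fiber with a Koszul complex via Proposition~\ref{prop:KoszulChange}, equate $\flatdim_A(B)$ with $\amp(\mrm{H}\opn{Fib}(\varphi))$, and then read off the amplitude from~(\ref{eqn:ampKoszul}). The paper's proof is identical in structure; the only difference is that for the key identity $\flatdim_A(B) = -\inf(k\otimes^{\mrm{L}}_A B)$ the paper cites \cite[Proposition~5.5(F)]{AFDim} directly, and it is precisely there that the hypothesis ``$B^0$ is noetherian'' enters, rather than through a general finite-flat-dimension argument as you suggest.
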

\begin{proof}
As in the proof of Corollary \ref{cor:hfib},
let $a_1,\dots,a_n$ be an $A$-regular sequence that generates $\m$,
so that
\[
\mrm{H}\opn{Fib}(\varphi) = A/\m\otimes^{\mrm{L}}_A B \cong K(B;\bar{b}_1,\dots,\bar{b}_n),
\]
where $\bar{b}_i = \mrm{H}^0(\varphi)(a_i) \in \bar{\n}$.
Because $B^0$ is noetherian,
we may deduce from \cite[Proposition 5.5(F)]{AFDim} that
\[
\flatdim_A(B) = \sup\{j\mid \opn{Tor}^A_j(A/\m,B) \ne 0\},
\]
and by the above, this is exactly $\amp(K(B;\bar{b}_1,\dots,\bar{b}_n))$.
According to (\ref{eqn:ampKoszul}),
we have that
\[
\amp(K(B;\bar{b}_1,\dots,\bar{b}_n)) = n - \dim(\mrm{H}^0(B)) + \dim(\mrm{H}^0(B)/\m\mrm{H}^0(B)) - \inf(B),
\]
so the result follows from noticing that $n = \dim(A)$ and $\amp(B) = -\inf(B)$.
\end{proof}

In the particular case where $B$ is a ring,
we obtain:
\begin{cor}
Let $\varphi:(A,\m) \to (B,\n)$ be a local homomorphism between noetherian local rings,
such that $A$ is regular and $B$ is Cohen-Macaulay.
Then there is an equality
\[
\flatdim_A(B) = \dim(A) - \dim(B) + \dim(B/\m B).
\]
\end{cor}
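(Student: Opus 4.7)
The plan is to derive this as a direct specialization of the preceding Corollary \ref{cor:miracle}, by viewing the ordinary ring $B$ as a DG-ring concentrated in cohomological degree zero. The task reduces to checking that all the hypotheses of Corollary \ref{cor:miracle} are satisfied, and then observing that the formula simplifies in this classical case.

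First I would verify the hypotheses. Since $B$ is a ring, regarded as a DG-ring we have $B^i = 0$ for $i < 0$, hence $\mrm{H}^0(B) = B$ and $\inf(B) = 0$, so $\amp(B) = 0$. In particular $B^0 = B$ is noetherian as required. Constant amplitude is automatic: $\opn{Supp}(\mrm{H}^{\inf(B)}(B)) = \opn{Supp}(B) = \opn{Spec}(B) = \opn{Spec}(\mrm{H}^0(B))$. The local homomorphism $\varphi:(A,\m) \to (B,\n)$ is in particular a local map of DG-rings, and $B$ being a Cohen-Macaulay ring in the classical sense translates directly into $B$ being a Cohen-Macaulay DG-ring, since for each prime $\bar{\p}$ the local ring $B_{\bar{\p}}$ is Cohen-Macaulay in the classical sense, hence its sequential depth coincides with the classical depth and equals $\dim(B_{\bar{\p}}) = \dim(\mrm{H}^0(B_{\bar{\p}}))$.

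Next I would apply Corollary \ref{cor:miracle} directly to $\varphi$. It yields
\[
\flatdim_A(B) = \dim(A) - \dim(\mrm{H}^0(B)) + \dim\bigl(\mrm{H}^0(B)/\m\mrm{H}^0(B)\bigr) + \amp(B).
\]
Since $\mrm{H}^0(B) = B$ and $\amp(B) = 0$, the right-hand side collapses to
\[
\dim(A) - \dim(B) + \dim(B/\m B),
\]
which is the desired equality.

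There is no real obstacle: the result is a clean corollary of Corollary \ref{cor:miracle}. The only minor point to be careful about is the reconciliation of the DG-theoretic Cohen-Macaulay hypothesis with the classical one, but this is immediate from the definitions recalled in Section \ref{sec:pre}, since for a ring concentrated in degree zero the sequential depth agrees with the classical depth and the local cohomology Krull dimension agrees with the Krull dimension.
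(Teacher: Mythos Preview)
Your proposal is correct and matches the paper's approach exactly: the paper states this corollary immediately after Corollary~\ref{cor:miracle} with the phrase ``In the particular case where $B$ is a ring, we obtain'' and gives no further proof, so the specialization you spell out is precisely what is intended.
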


If $A$ is a ring, and $M$ is a complex of $A$-modules, normalized so that $\sup(M) = 0$,
then clearly $\flatdim_A(M) \ge \amp(M)$.
Thus, $M$ is as flat as possible over $A$ exactly when $\flatdim_A(M) = \amp(M)$.
Our next corollary states that for Cohen-Macaulay local DG-rings $B$,
they are as flat as possible over a regular base if $\mrm{H}^0(B)$ is.

\begin{cor}\label{cor:miracle2}
Let $(A,\m)$ be a regular local ring,
let $(B,\bar{\n})$ be a Cohen-Macaulay local DG-ring with constant amplitude,
and suppose that $B^0$ is a noetherian ring.
Let $\varphi:A \to B$ be a local homomorphism of DG-rings,
and assume that the induced map $\mrm{H}^0(\varphi):A \to \mrm{H}^0(B)$ is flat.
Then $\flatdim_A(B) = \amp(B)$.
\end{cor}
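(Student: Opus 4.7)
The plan is to derive this as an immediate consequence of Corollary \ref{cor:miracle} combined with the classical dimension formula for flat local homomorphisms. From Corollary \ref{cor:miracle} we already have
\[
\flatdim_A(B) = \dim(A) - \dim(\mrm{H}^0(B)) + \dim\bigl(\mrm{H}^0(B)/\m\mrm{H}^0(B)\bigr) + \amp(B),
\]
so the only thing that remains to check is that the first three terms cancel, i.e.\ that
\[
\dim(\mrm{H}^0(B)) = \dim(A) + \dim\bigl(\mrm{H}^0(B)/\m\mrm{H}^0(B)\bigr).
\]

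For this I would invoke the standard dimension formula for flat local homomorphisms between noetherian local rings, as in \cite[Theorem 15.1]{Mat}. The hypotheses apply: $A$ is a noetherian local ring (in fact regular), $\mrm{H}^0(B)$ is a noetherian local ring because $B$ is assumed to be a noetherian local DG-ring, and the induced map $\mrm{H}^0(\varphi):A \to \mrm{H}^0(B)$ is a local homomorphism because $\varphi$ is local. By hypothesis, $\mrm{H}^0(\varphi)$ is flat, so the cited theorem gives the required equality of dimensions, with the closed fiber $\mrm{H}^0(B)/\m \mrm{H}^0(B)$ playing the role of the fiber ring.

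Substituting this equality back into the formula from Corollary \ref{cor:miracle} leaves only $\flatdim_A(B) = \amp(B)$, which is the claim. There is no genuine obstacle here; the only point worth checking is that the ambient hypotheses of Corollary \ref{cor:miracle} (namely that $B^0$ is noetherian and that $B$ has constant amplitude) are exactly the hypotheses of the present corollary, and that the flatness of $\mrm{H}^0(\varphi)$ is precisely what one needs to upgrade the general formula to the sharp equality $\flatdim_A(B) = \amp(B)$.
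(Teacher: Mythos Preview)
Your proof is correct and follows exactly the same route as the paper: invoke Corollary~\ref{cor:miracle} and then use the dimension formula for flat local homomorphisms to see that the first three terms vanish. The paper's proof is just a terser version of what you wrote.
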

\begin{proof}
Since $A \to \mrm{H}^0(B)$ is flat, 
we know that
\[
\dim(A) - \dim(\mrm{H}^0(B)) + \dim\left(\mrm{H}^0(B)/\m\mrm{H}^0(B)\right) = 0,
\]
so by Corollary \ref{cor:miracle}, 
we see that $\flatdim_A(B) = \amp(B)$.
\end{proof}

Our final application is a DG generalization of the following classical characterization of Cohen-Macaulay local rings:
if a local ring $B$ contains a regular local ring $A$,
such that $B$ is finite over $A$,
then $B$ is Cohen-Macaulay if and only if $B$ is free as an $A$-module.
Equivalently, if and only if $B$ is flat as an $A$-module,
which means that $\flatdim_A(B) = \amp(B)$.
This generalizes to the DG setting as follows:

\begin{cor}\label{cor:DGreg}
Let $(A,\m)$ be a regular local ring,
and let $(B,\bar{\n})$ be a noetherian local DG-ring with bounded cohomology and constant amplitude.
Let $\varphi:A \to B$ be a local homomorphism of DG-rings,
and assume that the induced map $\mrm{H}^0(\varphi):A \to \mrm{H}^0(B)$ is a finite injective homomorphism.
Then $\flatdim_A(B) = \amp(B)$ if and only if $B$ is Cohen-Macaulay.
\end{cor}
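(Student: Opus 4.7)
The plan is to identify the derived fiber $A/\m \otimes^{\mrm{L}}_A B$ with a Koszul complex over $B$, and then read off the Cohen--Macaulay condition from an explicit sequential-depth computation via Theorem \ref{thm:seq-depth}.

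First I would fix a regular system of parameters $a_1, \ldots, a_n \in \m$ with $n = \dim(A)$, so that $A/\m \cong K(A; a_1, \ldots, a_n)$ in $\cat{D}(A)$. Setting $\bar{b}_i = \mrm{H}^0(\varphi)(a_i) \in \bar{\n}$ and $\bar{I} = (\bar{b}_1, \ldots, \bar{b}_n) \subseteq \mrm{H}^0(B)$, Proposition \ref{prop:KoszulChange} gives
\[
K \;:=\; A/\m \otimes^{\mrm{L}}_A B \;\cong\; K(B; \bar{b}_1, \ldots, \bar{b}_n).
\]
Since $\mrm{H}^0(\varphi):A \to \mrm{H}^0(B)$ is finite and injective, it preserves Krull dimension, so $\dim(\mrm{H}^0(B)) = n$, and $\mrm{H}^0(B)/\bar{I}$ is finite over the field $A/\m$, hence $\dim(\mrm{H}^0(B)/\bar{I}) = 0$.

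Next I would establish that $\flatdim_A(B) = \amp(K)$. Because $\mrm{H}^0(\varphi)$ is finite, each $\mrm{H}^i(B)$ is finitely generated over $A$, so $B \in \cat{D}^{\mrm{b}}_{\mrm{f}}(A)$; as $A$ is regular local, $B$ is isomorphic in $\cat{D}(A)$ to a bounded complex of finitely generated free $A$-modules, and the standard minimal-resolution plus Nakayama argument gives $\flatdim_A(B) = -\inf(K)$. Iterated Nakayama (using $\bar{b}_i \in \bar{\n}$) together with $\mrm{H}^0(B)/\bar{I} \neq 0$ yields $\sup(K) = \sup(B) = 0$, so $\amp(K) = -\inf(K) = \flatdim_A(B)$. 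Combining Proposition \ref{prop:depthKoszul} (applied over $B$) with Proposition \ref{prop:depth-formula} then produces the key formula
\[
\opn{seq.depth}_B(\bar{I}, B) \;=\; \inf(K) + n - \inf(B) \;=\; n + \amp(B) - \flatdim_A(B).
\]
For the ``if'' direction: if $B$ is Cohen--Macaulay, then since $B$ has constant amplitude, Theorem \ref{thm:seq-depth} gives $\opn{seq.depth}_B(\bar{I}, B) = \dim(\mrm{H}^0(B)) - \dim(\mrm{H}^0(B)/\bar{I}) = n$, forcing $\flatdim_A(B) = \amp(B)$. For the ``only if'' direction: if $\flatdim_A(B) = \amp(B)$, the formula gives $\opn{seq.depth}_B(\bar{I}, B) = n$; since $\bar{I} \subseteq \bar{\n}$ any $B$-regular sequence in $\bar{I}$ is one in $\bar{\n}$, so $n \le \opn{seq.depth}_B(B) \le \dim(\mrm{H}^0(B)) = n$ by \cite[Corollary 5.5]{ShCM}, which makes $B$ local-Cohen--Macaulay. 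The constant-amplitude hypothesis then upgrades this to Cohen--Macaulay via the criterion recalled at the end of Section \ref{sec:pre}.

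The main obstacle is the equality $\flatdim_A(B) = -\inf(K)$: the corresponding step in Corollary \ref{cor:miracle} invoked \cite[Proposition 5.5(F)]{AFDim} under the standing assumption that $B^0$ is noetherian, which is not available here. The way around it is to exploit the finite injectivity of $\mrm{H}^0(\varphi)$, which makes $B$ lie in $\cat{D}^{\mrm{b}}_{\mrm{f}}(A)$ as an $A$-complex; since $A$ is regular local, one can then compute $\flatdim_A(B)$ directly from Tor with the residue field without any hypothesis on $B^0$.
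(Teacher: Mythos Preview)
Your proof is correct, but it takes a genuinely different route from the paper's. The paper argues the two directions asymmetrically: for $\flatdim_A(B)=\amp(B)\Rightarrow$ Cohen--Macaulay, it constructs the dualizing DG-module $R=\mrm{R}\opn{Hom}_A(B,A)$ and bounds $\amp(R)\le\amp(B)$ via the projective-dimension estimate, then invokes the amplitude criterion for local-Cohen--Macaulayness; for the converse it first replaces $B$ by a quasi-isomorphic model with $B^0$ noetherian (using \cite[Lemma 7.8]{Ye1} and the finiteness hypothesis) and then quotes Corollary~\ref{cor:miracle} verbatim. Your argument instead extracts the single identity $\opn{seq.depth}_B(\bar{I},B)=n+\amp(B)-\flatdim_A(B)$ from Propositions~\ref{prop:depthKoszul} and~\ref{prop:depth-formula}, and reads off both directions from it together with Theorem~\ref{thm:seq-depth} and the bound $\opn{seq.depth}_B(B)\le\dim(\mrm{H}^0(B))$. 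This is more symmetric and sidesteps both the dualizing-module computation and the $B^0$-noetherian replacement; you correctly note that finiteness of $\mrm{H}^0(\varphi)$ already places $B$ in $\cat{D}^{\mrm{b}}_{\mrm{f}}(A)$, which is what one actually needs to identify $\flatdim_A(B)$ with $-\inf(A/\m\otimes^{\mrm{L}}_A B)$. One small remark: the upgrade from local-Cohen--Macaulay to Cohen--Macaulay that you cite from Section~\ref{sec:pre} is justified in the paper via \cite[Corollary 8.11]{ShCM}, which uses the existence of a dualizing DG-module; this is harmless here since $\mrm{R}\opn{Hom}_A(B,A)$ is one, but it is worth noting.
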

\begin{proof}
Suppose first that $\flatdim_A(B) = \amp(B)$.
As $A$ is a regular local ring, 
in particular it is Gorenstein,
so it is a dualizing complex over itself.
Hence, since $A \to \mrm{H}^0(B)$ is finite,
it follows from \cite[Proposition 7.5]{Ye1} that
$R:=\mrm{R}\opn{Hom}_A(B,A)$ is a dualizing DG-module over $B$.
Since $A \to \mrm{H}^0(B)$ is finite, and $B$ is noetherian,
we deduce that $B \in \cat{D}^{\mrm{b}}_{\mrm{f}}(A)$.
By \cite[Corollary 2.10.F]{AFDim},
this implies that the projective dimension of $B$ over $A$ is equal to $\flatdim_A(B) = \amp(B)$.
This in turn implies that $\amp(R) \le \amp(B)$,
which shows that $B$ is local-Cohen-Macaulay.
Since $B$ has a dualizing DG-module and is local-Cohen-Macaulay,
by \cite[Corollary 8.11]{ShCM},
the assumption that $\opn{Supp}(\mrm{H}^{\inf(B)}(B)) = \opn{Spec}(\mrm{H}^0(B))$ implies that 
$B$ is Cohen-Macaulay.
Conversely, suppose that $B$ is Cohen-Macaulay.
Since $A \to \mrm{H}^0(B)$ is finite,
by the proof of \cite[Lemma 7.8]{Ye1},
we may replace $B$ by a quasi-isomorphic commutative DG-algebra over $A$,
with the extra property that $B^0$ is noetherian.
The assumption that $A \to \mrm{H}^0(B)$ is both finite and injective implies that
$\dim(A) = \dim(\mrm{H}^0(B))$, and that $\dim(\mrm{H}^0(B)/\m\mrm{H}^0(B)) = 0$.
Hence, by Corollary \ref{cor:miracle}, 
we deduce that $\flatdim_A(B) = \amp(B)$.
\end{proof}

\textbf{Acknowledgments.}

The author would like to thank Sean Sather-Wagstaff for asking me if Corollary \ref{cor:kosRing} is true,
and Amnon Yekutieli for helpful remarks on a previous version of this manuscript,
and for asking me if a version of Corollary \ref{cor:DGreg} holds.
The author is thankful to an anonymous referee for several suggestions that helped significantly improving this manuscript.
This work has been supported by Charles University Research Centre program No.UNCE/SCI/022,
and by the grant GA~\v{C}R 20-02760Y from the Czech Science Foundation.

\end{document}